\DeclareMathOperator{\vv}{\mathbf{v}}
\DeclareMathOperator{\ww}{\mathbf{w}}
\newtheorem{thm}{Theorem}[section]
\newtheorem*{thm*}{Theorem}
\newtheorem{prop}[thm]{Proposition}
\newtheorem*{prop*}{Proposition}
\newtheorem{cor}[thm]{Corollary}
\newtheorem*{cor*}{Corollary}
\newtheorem{lem}[thm]{Lemma}
\newtheorem*{lem*}{Lemma}
\newtheorem*{oquest*}{Open Question}
\newtheorem*{ssstproblem}{Schur--Siegel-Smyth Trace Problem}
\theoremstyle{remark}
\newtheorem{rmk}[thm]{Remark}
\theoremstyle{remark}
\newtheorem*{rmk*}{Remark}
\theoremstyle{definition}
\newtheorem{defn}[thm]{Definition}
\theoremstyle{definition}
\newtheorem{notat}[thm]{Notation}
\theoremstyle{definition}
\theoremstyle{definition}
\theoremstyle{definition}
\newtheorem*{defn*}{Definition}
\theoremstyle{definition}
\newtheorem{ex}[thm]{Example}
\numberwithin{equation}{section}%numbers equations by section
\newcommand{\C}{\mathbb{C}}
\newcommand{\QQ}{\mathbb{Q}}
\newcommand{\R}{\mathbb{R}}
\DeclareMathOperator{\FFF}{\mathbb{F}}
\newcommand{\normf}[1]{\left|\left|w_{\mu}^{n} #1\right|\right|_{\Sigma}}
\newcommand{\normz}[1]{\left|\left| #1\right|\right|_{\Sigma}}
\newcommand{\normff}[2]{\left|\left|w_{\mu}^{#2} #1\right|\right|_{\Sigma}}
\newcommand{\res}{\textup{res}}
\DeclareFontFamily{U}{wncy}{}
\DeclareFontShape{U}{wncy}{m}{n}{<->wncyr10}{}
\DeclareSymbolFont{mcy}{U}{wncy}{m}{n}
\DeclareMathSymbol{\Sha}{\mathord}{mcy}{"58}
\newcommand\restr[2]{{% we make the whole thing an ordinary symbol
  \left.\kern-\nulldelimiterspace % automatically resize the bar with \right
  #1 % the function
  \vphantom{\big|} % pretend it's a little taller at normal size
  \right|_{#2} % this is the delimiter
  }}
\DeclareMathOperator*{\Vol}{Vol}
\DeclareMathOperator*{\tr}{tr}
\DeclareMathOperator*{\disc}{disc}
\newcommand{\weaks}{$\text{weak}^*\,$}
\begin{document}
\title{Algebraic integers with conjugates in a prescribed distribution}

\author{Alexander Smith}
\email{asmith13@math.ucla.edu}
\date{\today}

\maketitle

\begin{abstract}
Given a compact subset $\Sigma$ of $\R$ obeying some technical conditions, we consider the set of algebraic integers whose conjugates all lie in $\Sigma$. The distribution of conjugates of such an integer defines a probability measure on $\Sigma$; our main result gives a necessary and sufficient condition for a given probability measure on $\Sigma$ to be the limit of some sequence of distributions of conjugates. As one consequence, we show there are infinitely many totally positive algebraic integers $\alpha$ with $\tr(\alpha) < 1.89831 \cdot\deg(\alpha)$. We also show how this work can be applied to find simple abelian varieties over finite fields with extreme point counts.
\end{abstract}

\section{Introduction}
\label{sec:intro}

Given an algebraic integer $\alpha \in \C$,  take $\alpha_1 = \alpha,\, \alpha_2, \dots, \,\alpha_n$ to be the complex roots of the minimal polynomial of $\alpha$ over $\QQ$. The degree and trace of $\alpha$ are then defined by
\[\deg(\alpha) = n \,\,\,\text{ and }\,\, \tr(\alpha) = \alpha_1 + \alpha_2 + \dots + \alpha_n.\]
We call $\alpha$ totally positive if the conjugates $\alpha_1, \dots, \alpha_n$ are all positive real numbers. Take $\lambda_{\textup{SSS}}$ to be the least real number such that, for any $\epsilon > 0$, there are only finitely many totally positive algebraic integers $\alpha$ satisfying $\tr(\alpha) < (\lambda_{\textup{SSS}} - \epsilon) \deg (\alpha)$.

In 1918, Schur proved \cite[Satz XI]{Schur18} that $\lambda_{\textup{SSS}}$ satisfies
\[e^{1/2}  \le \lambda_{\textup{SSS}} \le 2.\]
In 1945, Siegel improved the lower bound in this inequality to $1.7336\dots$ \cite[Theorem II]{Sieg45}. In 1984, Smyth improved it to $1.7719$ \cite{Smyth84b}. Since then, the method Smyth introduced in \cite{Smyth84a, Smyth84b} has been used repeatedly to improve the lower bound \cite{McSmyth04, ABP06, AgPe07, AgPe08, Flam09, McKee11, Liang11}. The current state of the art appears in \cite{WaWuWu21}, and takes the form
\[1.793145 \le \lambda_{\textup{SSS}} \le 2.\]
This work fits into the framework of the following problem, which was codified by Borwein in \cite{Borw02}.
\begin{ssstproblem}
Fix $\epsilon > 0$. Show that there are only finitely many totally positive algebraic integers satisfying 
\begin{equation}
\label{eq:trace_problem}
\tr(\alpha) \le (2 - \epsilon)\deg(\alpha),
\end{equation}
 and explicitly compute this list of exceptions if possible.
\end{ssstproblem}
A detailed account of the history of this problem can be found in \cite{AgPe08}. This problem statement reflects the general consensus that $\lambda_{\textup{SSS}}$ should equal $2$. Our first result is that this is not the case.
\begin{thm}
\label{thm:Serre}
We have $\lambda_{\textup{SSS}} < 1.89831$.
\end{thm}
The constant $1.89831$ hints at the method underlying our work. To explain this, we need to recall the approach to the trace problem pioneered by Smyth in \cite{Smyth84a, Smyth84b}. For a given $\lambda > 0$, suppose that one has found a finite sequence $a_1, \dots, a_N$ of positive numbers and a finite sequence $Q_1, \dots, Q_N$ of nonzero integer polynomials so
\begin{equation}
\label{eq:Smyth}
x \ge \lambda + \sum_{k =1}^N a_k \log|Q_k(x)|\quad\text{for all } x > 0.
\end{equation}
Given a totally positive algebraic integer $\alpha$ with conjugates $\alpha_1 = \alpha,\, \alpha_2, \dots, \,\alpha_n$, we may sum this inequality over the $\alpha_i$. This yields
\[\sum_{i \le n} \alpha_i \ge \lambda n + \sum_{k \le N}a_k \log\left|\prod_{i \le n} Q_k(\alpha_i)\right|.\]
The product $\prod_{i \le n} Q_k(\alpha_i)$ is recognizable as the resultant $\res(P, Q_k)$, where $P$ is the minimal polynomial of $\alpha$. In particular, if $\alpha$ is not a root of some $Q_k$, $\res(P, Q_k)$ is a nonzero integer for each $k$, and we are left with
\[\tr(\alpha) \ge \lambda \deg(\alpha).\]
Smyth's original article gives an instance of \eqref{eq:Smyth} with $\lambda = 1.7719$ and about $15$ auxiliary polynomials. The current state of the art \cite{WaWuWu21} gives an instance of \eqref{eq:Smyth} with $\lambda = 1.793145$ and $130$ auxiliary polynomials.

It was observed by Smyth \cite{Smyth99} and Serre \cite[Appendix B]{AgPe08} that there were values of $\lambda$ less than $2$ for which \eqref{eq:Smyth} would not hold for any choice of $Q_1, \dots, Q_N$ and $a_1, \dots, a_N$. Smyth wrote that this suggested ``that perhaps $2$ is not in fact the smallest limit point [of the ratios $\tr(\alpha)/\deg(\alpha)$]''  \cite[p. 316]{Smyth99}, but these results are more often viewed as evidence of the limitations of Smyth's method. 

But Smyth's optimistic interpretation is correct. As we will show in Theorem \ref{thm:general_Smyth}, $\lambda_{\textup{SSS}}$ is the least upper bound of the $\lambda$ for which there is some $N \ge 1$, some sequence of nonzero integer polynomials $Q_1, \dots, Q_N$, and some positive real numbers $a_1, \dots, a_N$ so \eqref{eq:Smyth} holds. As we discuss in detail in Example \ref{ex:Serre}, Serre proved that no such inequality can hold for $\lambda > 1.89830\dots$, so Theorem \ref{thm:general_Smyth} suffices to prove Theorem \ref{thm:Serre}.

\begin{rmk}
The constant $1.89831$ is not optimal for two reasons. First, our work in Example  \ref{ex:Serre} uses rounded forms of the optimal parameters for Serre's example. More fundamentally, even with perfectly optimized parameters, Serre's argument cannot produce the actual value for $\lambda_{\text{SSS}}$. See Proposition  \ref{prop:crater} for more details.
\end{rmk}

Our work has applications outside the Schur--Siegel--Smyth trace problem. A notable application is to the question of extreme point counts for abelian varieties over finite fields, which reduces by Honda--Tate theory \cite{Honda68} to an optimization question akin to the trace problem; see Proposition \ref{prop:Honda}. One consequence of this is the following result, which follows from Theorem \ref{thm:Serre} by Proposition \ref{prop:Kade}.
\begin{cor}
\label{cor:Serre2}
There is some $C > 0$ so we have the following:

Choose a square prime power $q$ no smaller than $C$. Then there are infinitely many $\FFF_q$-simple abelian varieties $A$ over $\FFF_q$ satisfying
\[\#A(\FFF_q) \,\ge\, \left(q + 2\sqrt{q} - 0.89831\right)^{\dim A}\]
and infinitely many more satisfying
\[\#A(\FFF_q)\,\le\, \left(q - 2\sqrt{q} + 2.89831\right)^{\dim A}.\]
\end{cor}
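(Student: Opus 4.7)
The plan is to use Honda--Tate theory to translate the two point-count inequalities into bounds on traces of totally positive algebraic integers with conjugates in a bounded interval, and then to supply such integers via the compact-support content of Corollary \ref{cor:Smyth_limit}. Write $s = \sqrt{q} \in \Z$. Starting from a totally positive algebraic integer $\beta$ of degree $n$ with conjugates in a fixed compact $[0, M] \subset \R$ and with $q$ large enough that $M < 4s$, I would form the two shifts $\alpha^+ = \beta - 2s$ and $\alpha^- = 2s - \beta$; the conjugates of each $\alpha^{\pm}$ lie in $(-2s, 2s)$, so the roots $\pi^{\pm}$ of $X^2 - \alpha^{\pm} X + q$ are Weil $q$-numbers of absolute value $s$. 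For generic $\beta$ the field $\QQ(\pi^{\pm})$ is a CM field of degree $2n$, and Honda--Tate then produces $\FFF_q$-simple abelian varieties $A^{\pm}$ of dimension $n$ with Frobenius characteristic polynomial $\prod_j(X^2 - \alpha^{\pm}_j X + q)$. The required infinite supply of $\beta$, with a \emph{uniform} conjugate bound $M$, is exactly what the proof of Corollary \ref{cor:Smyth_limit} delivers: a sequence $(\beta_k)$ of strictly increasing degree $n_k$ with $\tr(\beta_k) < (1.8984 - \delta) n_k$ for some fixed $\delta > 0$.

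The low-count direction is immediate. The point count factors as
\[\#A^-(\FFF_q) \,=\, \prod_{j=1}^{n}(q + 1 - \alpha^-_j) \,=\, \prod_{j=1}^{n}\big((s-1)^2 + \beta_j\big),\]
so concavity of $\log$ and Jensen's inequality give
\[\tfrac{1}{n}\log \#A^-(\FFF_q) \,\le\, \log\!\big((s-1)^2 + \tr(\beta)/n\big) \,<\, \log\!\big((s-1)^2 + 1.8984\big) \,=\, \log(q - 2\sqrt{q} + 2.8984),\]
with no further constraint on $q$ beyond what the Weil construction demands.

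The high-count direction is the real obstacle. Here $\#A^+(\FFF_q) = \prod_{j}\big((s+1)^2 - \beta_j\big)$, and since $\log(c - t)$ is concave in $t$, Jensen runs the wrong way and controlling $\tr(\beta)$ alone is not enough. I would instead expand
\[\log\!\big((s+1)^2 - \beta_j\big) \,=\, \log((s+1)^2) - \tfrac{\beta_j}{(s+1)^2} + O\!\big(\beta_j^2/s^4\big)\]
uniformly in $\beta_j \in [0, M]$, average over $j$, and compare with the Taylor expansion of $\log((s+1)^2 - 1.8984)$. The strict gap of size $\delta/(s+1)^2$ coming from $\tr(\beta_k)/n_k < 1.8984 - \delta$ dominates the quadratic error $O(M^2/s^4)$ once $q$ exceeds a constant $C$ depending only on $M$ and $\delta$, which yields the desired bound. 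The crux is precisely this uniform compactness: without $\beta_k$'s conjugates staying inside a fixed $[0, M]$, the second-order error would grow with $n_k$ and swamp the gap. The routine check that generic $\pi^{\pm}$ generates a CM field of degree $2n$, so that $A^{\pm}$ is truly $\FFF_q$-simple of dimension $n$, is handled inside Proposition \ref{prop:Honda}.
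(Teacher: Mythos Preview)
Your argument is correct and is essentially an unpacking of the paper's one-line proof. The paper simply cites Proposition~\ref{prop:Honda}, Corollary~\ref{cor:general_Smyth}, and Serre's computation in \cite[Appendix B]{AgPe08}; what that citation leaves implicit is precisely the step you carry out explicitly, namely passing from Serre's trace bound $\int y\,d\mu < 1.8984$ on a fixed compact $[a,b]$ to the required bounds on $\int \log|q+1-x|\,d\mu$ after the integer shift $x = \pm(y - 2\sqrt{q})$. Your Jensen argument for the low-count side and Taylor expansion for the high-count side are exactly the bridge, and your insistence on a uniform conjugate bound $M$ is the right point---it is supplied by Theorem~\ref{thm:main}, since the approximating polynomials there have all roots in the fixed support of Serre's measure.

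One minor remark: your displayed identity $\#A^{\pm}(\FFF_q) = \prod_j(q+1-\alpha^{\pm}_j)$ can be off by an exponent $e \ge 1$ coming from the local invariants in Honda--Tate, and correspondingly $\dim A^{\pm} = en$ rather than $n$. This does not affect your argument, since the quantity you actually bound is $(\#A)^{1/\dim A} = \bigl(\prod_j(q+1-\alpha^{\pm}_j)\bigr)^{1/n}$, which is independent of $e$; but it is worth writing the computation in terms of this ratio from the start rather than asserting $\dim A^{\pm} = n$.
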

This improves on the prior records for this problem found in \cite{Kade21} and \cite{BCLPS21}. Further optimization work could improve this result and extend it to non-square $q$. The constants replacing $0.89831$ and $2.89831$ in such a sharpened generalization would depend on the fractional part of $2\sqrt{q}$.

Theorem \ref{thm:general_Smyth} is a consequence of the main theorem of this paper, Theorem \ref{thm:main}. Subject to some restrictions, this theorem characterizes the measures on $\R$ that are attainable as the limit of the distribution of conjugates of some sequence of totally real algebraic integers $\alpha_1, \alpha_2, \dots$, in the sense considered by Serre in \cite{Serre18}. In line with our vindication of Smyth's method,
we find that the only obstruction to obtaining a given measure is the integrality of the resultant of integer polynomials. 

We introduce some notation to make this precise.
\begin{defn}
\label{defn:counting}
Throughout this paper, the term \emph{Borel measure}, or just \emph{measure}, will denote a finite positive measure on the $\sigma$-algebra of Borel sets of $\C$.

For any complex number $\alpha$, take $\delta_{\alpha}$ to be the Borel measure defined by 
\[\delta_{\alpha}(Y) = \begin{cases} 1 &\text{ if } \alpha \in Y \\ 0 & \text{ otherwise.}\end{cases}\]
Given a complex polynomial $P(z) = a_n \cdot \prod_{i \le n} (z - \alpha_i)$ of degree $n \ge 1$, we follow \cite{Smyth84a} and \cite[(1.2.1)]{Serre18} to define the associated counting measure $\mu_P$ by 
\[\mu_P = \tfrac{1}{n}( \delta_{\alpha_1} + \dots + \delta_{\alpha_n}).\]
This is a probability measure supported on $\{\alpha_1, \dots, \alpha_n\}$.

Given a compact subset $\Sigma$ of $\C$, we will endow the set of Borel measures whose support is contained in $\Sigma$ with the \weaks topology, where an infinite sequence $\mu_1, \mu_2, \dots$ converges to $\mu$ if and only if
\[\lim_{k \to \infty} \int_{\Sigma} f d\mu_k = \int_{\Sigma} f d\mu\]
for every continuous function $f: \Sigma \to \R$.
\end{defn}

\begin{thm}
\label{thm:main}
Take $\Sigma$ to be a compact subset of $\R$ with at most countably many components. We assume that $\Sigma$ has capacity strictly larger than $1$ (see Definition \ref{defn:capacity}).

Then, for any Borel probability measure $\mu$ with support contained in $\Sigma$, the following two conditions are equivalent:
\begin{enumerate}
\item For every nonzero integer polynomial $Q$,
\[\int_{\Sigma} \log|Q(x)| d\mu(x) \ge 0.\]
\item There is an infinite sequence of distinct irreducible monic integer polynomials $R_1, R_2, \dots$ such that the support of $\mu_{R_k}$ is contained in $\Sigma$ for every $k$ and such that $\mu_{R_1}, \mu_{R_2}, \dots$ has \weaks limit $\mu$.
\end{enumerate}
\end{thm}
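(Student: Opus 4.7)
My plan splits the proof into the routine direction $(2) \Rightarrow (1)$ and the substantive direction $(1) \Rightarrow (2)$.

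For $(2) \Rightarrow (1)$, fix a nonzero integer polynomial $Q$. Since the $P_k$ are distinct irreducibles, $P_k$ and $Q$ are coprime for all but finitely many $k$, whence the resultant $\res(P_k, Q) = \prod_i Q(\alpha_{k,i})$ is a nonzero integer and $\int_\Sigma \log|Q|\,d\mu_{P_k} \ge 0$ for $k$ large. The only subtlety is that $\log|Q|$ is not continuous at the zeros of $Q$, so one cannot pass to the weak-$*$ limit directly. I would patch this by truncating: for each $M > 0$ the function $\max(\log|Q(x)|, -M)$ is continuous on $\Sigma$, so weak-$*$ convergence delivers $\int_\Sigma \max(\log|Q|, -M)\,d\mu \ge 0$, and monotone convergence as $M \to \infty$ yields condition (1).

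For $(1) \Rightarrow (2)$, I would argue in three stages. First, because the space of Borel probability measures on $\Sigma$ in the weak-$*$ topology is compact and metrizable, a diagonalization would reduce the problem to a dense subclass of targets $\mu$ -- for instance, those with smooth density on a finite union of components of $\Sigma$ -- while preserving condition (1). Second, and this is the technical heart, I would construct monic integer polynomials $P_n$ with degrees tending to infinity, all roots in $\Sigma$, and $\mu_{P_n}$ converging weakly to $\mu$. My preferred framework is convex duality. Let $\mathcal{T}$ denote the weak-$*$ closure of the set of counting measures $\mu_P$ for $P \in \Z[x]$ monic with roots in $\Sigma$. Because multiplying polynomials corresponds to a weighted average of the associated counting measures, $\mathcal{T}$ is a closed convex subset of the space of probability measures on $\Sigma$. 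If $\mu \notin \mathcal{T}$, Hahn--Banach produces a continuous $f : \Sigma \to \R$ and a constant $c$ with $\int f\,d\mu < c \le \int f\,d\nu$ for every $\nu \in \mathcal{T}$. To contradict condition (1) I would approximate $-f$ uniformly on $\Sigma$ by $\tfrac{1}{\deg Q}\log|Q|$ for some nonzero $Q \in \Z[x]$, at which point condition (1) applied to that $Q$ collides with the separation inequality. This uniform approximation is precisely where the hypothesis $\text{cap}(\Sigma) > 1$ enters, via a Fekete--Szeg\H{o}-type density theorem for logarithmic potentials of integer polynomials. Third, I would upgrade the monic integer $P_n$ to distinct irreducibles. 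The cleanest route is to build irreducibility into the construction of stage two by imposing a suitable congruence or Newton-polygon condition on the coefficients of $P_n$; these constraints are compatible with the approximation argument because the required perturbations are small in coefficient norm, and distinctness is automatic once the $P_n$ have degrees tending to infinity.

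The principal obstacle is the uniform approximation step in stage two: showing that every continuous function on $\Sigma$, modulo an additive constant, lies in the sup-norm closure of $\{\tfrac{1}{\deg Q}\log|Q| : Q \in \Z[x] \sm \{0\}\}$. This is where the capacity hypothesis does essential work, where one must exploit the countable-component hypothesis on $\Sigma$, and where I expect the bulk of the analytic effort of the paper to reside; the other stages are comparatively soft topological maneuvers once this density is in hand.
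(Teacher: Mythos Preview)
Your treatment of $(2)\Rightarrow(1)$ is fine and matches the paper (which attributes this direction to Serre via upper semicontinuity of $\log|Q|$ and monotone convergence).

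The gap is in $(1)\Rightarrow(2)$. Your Hahn--Banach scheme reduces everything to the claim that any continuous $f$ on $\Sigma$ can be uniformly approximated, up to an additive constant, by functions of the form $\tfrac{1}{\deg Q}\log|Q|$ with $Q\in\Z[x]\setminus\{0\}$. This is not a known ``Fekete--Szeg\H{o}-type density theorem,'' and in fact the statement you would actually need is equivalent to the theorem you are trying to prove. To see this, let $\mathcal{C}$ be the (closed, convex) set of probability measures on $\Sigma$ satisfying condition (1); the easy direction gives $\mathcal{T}\subseteq\mathcal{C}$, and you want $\mathcal{C}\subseteq\mathcal{T}$. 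Your separation argument, unwound, asks that $\inf_{\nu\in\mathcal{T}}\int f\,d\nu=\inf_{\nu\in\mathcal{C}}\int f\,d\nu$ for every continuous $f$, which is just the dual restatement of $\mathcal{T}=\mathcal{C}$. The paper does prove exactly this dual identity (its Corollary~5.2, the ``limit of Smyth's method''), but only as a \emph{consequence} of Theorem~1.6, not as an input to it. So your stage two is circular as written.

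The paper's route is not a soft duality argument at all; it is a direct construction. After reducing (as you suggest) to H\"older $\mu$ on a finite union of intervals, it builds the polynomials $P_n$ by hand in two layers. First, geometry of numbers: Minkowski's second theorem on a lattice of polynomials produces a basis of integer polynomials whose $n$-norms (a weighted sup-norm tied to $U^\mu$) multiply to roughly $e^{n^2 I(\mu)/2}$, and the flatness theorem for simplices is used to show that any real polynomial can be shifted to an integer one at controlled cost. These combine to give a squarefree integer polynomial $Q_n$ of degree $n$ with $\|Q_n\|_n\le n^{C\sqrt n}$. Second---and this is the step your outline does not touch---one must force all roots of the final polynomial to lie in $\Sigma$. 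The paper is explicit that the norm bound alone does \emph{not} guarantee this: the error $n^{C\sqrt n}$ in adjusting to integer coefficients is far larger than the pointwise lower bound $n^{-C}$ one has for the approximating polynomial, so a naive sign-change argument fails. The fix is a separate Chebyshev-type construction (Proposition~4.1) that manufactures an auxiliary factor whose values on $\Sigma$ are large enough, of order $\kappa^m$ with $\kappa=\textup{cap}(\Sigma)>1$ and $m\sim C_0\sqrt{n}\log n$, to absorb the $n^{C\sqrt n}$ error while keeping all its own roots inside $\Sigma$; this is where the capacity hypothesis genuinely does its work. Irreducibility is then enforced by an Eisenstein condition at $2$, built into the coefficient adjustments, which is close to what you propose in stage three.
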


The proof that condition (2) implies condition (1) in this theorem was given by Serre as \cite[Lemma 1.3.4]{Serre18}. Given a nonzero integer polynomial $Q$, Serre notes that $\log|Q|$ is upper semicontinuous on $\Sigma$, and concludes that
\[\int_{\Sigma} \log|Q| d\mu \ge \limsup_{k \to \infty} \int_{\Sigma} \log|Q| d\mu_{R_k} =  \limsup_{k \to \infty} \frac{\log |\res(R_k, Q)|}{\deg R_k} \ge 0,\]
with the first inequality following from the monotone convergence theorem.

The proof of the converse, that (1) implies (2), is new and accounts for most of the length of this paper. First, using Proposition \ref{prop:limit_Holder}, we are able to assume without loss of generality that $\mu$ and $\Sigma$ satisfy some niceness properties. Under this assumption, we use the coefficient adjustment technique of Robinson \cite{Robi64} and Fekete and Szeg\H{o} \cite[Theorem D]{FeSz55} to  construct monic real polynomials with some integral coefficients whose associated counting measures approximate $\mu$. We can then apply the geometry of numbers to adjust the remaining non-integral coefficients to integers.

After some preliminary work in Section \ref{sec:prelim}, we present the results we need from the geometry of numbers in Section \ref{sec:GON} and give our coefficient adjustment argument in Section \ref{sec:adjust}. The proof of Proposition \ref{prop:limit_Holder} is given in Section \ref{sec:limits}, finishing the proof that (1) implies (2). After finishing  the proof, we will show that it implies the other results mentioned so far.

It would be interesting to remove some of the restrictions placed on $\Sigma$ in Theorem \ref{thm:main}. It is straightforward to find totally disconnected compact subsets of $\R$ of capacity greater than $1$ that contain no algebraic number, so the restriction to $\Sigma$ with countably many components is probably inevitable. The restriction to subsets of $\R$ avoids annoying complex sets like $\{z \in \C\,:\,\,|z| = 3/2\}$, but it seems reasonable to expect the theorem to hold for e.g$.$ closures of open sets in $\C$, with \cite{FeSz55} making some progress in this direction. We note that the study of Weil numbers reduces to the study of totally real algebraic integers whose conjugates lie in certain intervals, so a version of Theorem \ref{thm:main} for complex $\Sigma$ is not necessary to study such numbers.

Removing the restriction to $\Sigma$ of capacity strictly greater than $1$ would be of greatest interest. From Proposition \ref{prop:general_energy}, we know this would only be interesting for $\Sigma$ of capacity exactly $1$, and that the only measure to consider would be the unweighted equilibrium measure. The second condition of Theorem \ref{thm:main} is known for some special $(\Sigma, \mu)$ of this form \cite{Robi64}, but it remains unknown whether this condition holds more generally.

\subsection*{Acknowledgments}
We would like to thank Jean--Pierre Serre and Chris Smyth for their comments on this paper and for providing their relevant correspondence.

We would also like to thank Frank Calegari, Brian Conrad, Pol van Hoften,  Borys Kadets, Wanlin Li, and Bjorn Poonen for useful feedback on this project, and the anonymous referees for detailed comments on previous drafts of this paper.

This research was partially conducted during the period the author served as a Clay Research Fellow.

\section{Preliminaries on measures and polynomials}
\label{sec:prelim}
\begin{defn}
\label{defn:capacity}
Choose a compact subset $\Sigma$ of $\C$. Given a Borel measure $\mu$ supported on $\Sigma$, we define the potential function $U^{\mu}: \C \to \R \cup\{\infty\}$ of $\mu$ by
\[U^{\mu}(z) = \int_{\Sigma} -\log|z - w| d\mu(w),\]
and we define the energy of $\mu$ by
\[I(\mu) = \int_{\Sigma} U^{\mu}(z)d\mu(z) = \int_{\Sigma} \int_{\Sigma} -\log|z - w| d\mu(w) d\mu(z).\]
The energy of $\mu$ lies in $\R\cup \{\infty\}$. Following e.g. \cite{Ransford95}, we define the capacity of $\Sigma$ by
\[\kappa_{\Sigma} \,=\, {\sup}_{\mu}\, e^{ -I(\mu)},\]
 where the supremum is taken over all probability measures with support contained in $\Sigma$. If $\Sigma$ has positive capacity, the supremum is attained for a unique probability measure of minimal energy known as the unweighted equilibrium measure of $\Sigma$.
\end{defn}

For most of our proof of Theorem \ref{thm:main}, we will restrict our attention to measures that have relatively nice potential functions.
\begin{defn}
\label{defn:Holder}
Given a Borel measure $\mu$ supported on $\R$ and $\eta > 0$, we say that $\mu$ is  \emph{$\eta$-H\"{o}lder} if there is $C> 0$ for which
\begin{equation}
\label{eq:Holder_condition}
\mu([x, y]) \le C \cdot (y - x)^{\eta} \quad\text{ for all } x, y \in \R\text{ with } x < y.
\end{equation}
We write that $\mu$ is \emph{H\"{o}lder} if it is $\eta$-H\"{o}lder for some $\eta > 0$.
\end{defn}

Given a Borel measure $\mu$, real numbers $x$ and $b$ with $b > 0$, and a subinterval $I$ of $[x-b, x+b]$, we observe that
\begin{align}
\label{eq:Tonelli}
\int_I \log|x -t| d\mu(t) &= \,\int_I\left( \log b -  \int_{|x-t|}^b r^{-1} dr \right) d\mu(t) \\
&=\, \log b \cdot \mu(I) - \int_{0}^b r^{-1}  \cdot \mu\left([x- r, \, x+ r] \cap I\right)dr \nonumber
\end{align}
from Tonelli's theorem \cite[Theorem 14.2]{DiBe16}. This observation combines well with the H\"{o}lder condition, as we see in the proof of the following lemma.
\begin{lem}
\label{lem:pot_Hold}
Given $\eta$ in $(0, 1)$ and an $\eta$-H\"{o}lder measure $\mu$ supported in a compact subset of $\R$, the potential $U^{\mu}$ is a finite, $\eta$-H\"{o}lder continuous function on $\R$. That is to say, there is a positive number $C$ so 
\begin{equation}
\label{eq:pot_Hold}
\left|U^{\mu}(y) - U^{\mu}(x) \right|\,\le\, C \cdot |y - x|^{\eta}  \quad\text{ for all } x, y \in \R.
\end{equation}
\end{lem}
\begin{proof}
Choose $C_0 > 0$ so that \eqref{eq:Holder_condition} holds for the measure $\mu$ with $C = C_0$.  Choose $x, y \in \R$ and take $m = |y-x|/2$. The statement is clear if $x = y$, so we suppose $x \ne y$. Define $I_1$ to be the interval from $x - m$ to $x + m$. If $x < y$, define $I_2$ to be the interval $(-\infty, x - m]$; otherwise, take $I_2$ to be the interval $[x + m, \infty)$.

We have
\[\int_{I_1} \log\frac{|y -t|}{|x-t|} d\mu(t) \le \mu(I_1) \log 3m - \int_{I_1} \log|x-t|d\mu(t).\]
We have $\mu(I_1) \le C_0 (2m)^{\eta}$ by \eqref{eq:Holder_condition}. Applying \eqref{eq:Tonelli} with $b = m$ and the H\"{o}lder condition to the final integral, we find that this right hand side is no more than
\[C_0 (2m)^{\eta} \cdot (\log 3m - \log m) + \int_0^{m} r^{-1} C_0(2r)^{\eta}dr \le C_0 \cdot (\log 3 + \eta^{-1}) \cdot |x - y|^\eta.\]
Meanwhile, the definition of $I_2$ gives 
\[\mu([y - r, \,y+r] \cap I_2) = \begin{cases} 0 &\text{ if } r < 3m \\ \mu([x -r + 2m,\, x+ r -2m] \cap I_2) &\text{ if } r \ge 3m,\end{cases}\]
so \eqref{eq:Tonelli} gives
\[\int_{I_2} \log|y-t| d\mu(t) \,= \,\mu(I_2) \log b  \,-\, \int_m^{b - 2m} (r + 2m)^{-1} \cdot\mu([x - r,x+r] \cap I_2)dr\]
 for $b$ sufficiently large. Applying \eqref{eq:Tonelli} at $x$, taking a difference with this last equation, and letting $b$ tend to infinity gives
\[\int_{I_2} \log\frac{|y -t|}{|x-t|} d\mu(t)  = \int_{m}^{\infty} \left(r^{-1} - (r+ 2m)^{-1}\right) \mu\left( [x - r, x + r] \cap I_2\right)dr.\]
We have $r^{-1} - (r + 2m)^{-1} \le 2m r^{-2}$ for $r \ge 0$, so this is at most
\[\int_m^{\infty} 2mr^{-2} \cdot C_0(2r)^{\eta}dr =2C_0(1- \eta)^{-1} \cdot |x - y|^\eta.\]
Finally, we have
\[\int_{\R \backslash (I_1 \cup I_2)} \log\frac{|y -t|}{|x-t|} d\mu(t) \le 0\]
since $|y- t|$ is less than $|x - t|$ on $\R \backslash (I_1 \cup I_2)$. In short, we have
\[U^{\mu}(x) \le U^{\mu}(y) +C_0\left(\eta^{-1}+  2(1 - \eta)^{-1} + \log 3\right) \cdot |x - y|^{\eta}\quad\text{for all } x, y \in \R.\]
Since $U^{\mu}(y)$ is finite for some $y$, we may conclude from this inequality that $U^{\mu}(x)$ is always finite and that \eqref{eq:pot_Hold} is satisfied with $C = C_0\left(\eta^{-1}+  2(1 - \eta)^{-1} + \log 3\right)$.
\end{proof}

Much of our work also requires a slightly nicer class of compact subsets $\Sigma$.
\begin{defn}
\label{defn:CFUOI}
Given a compact subset $\Sigma$ of $\R$, we call $\Sigma$ a \emph{compact finite union of intervals} if $\Sigma$ is nonempty, contains finitely many connected components, and has no isolated points.
\end{defn}

The following proposition shows there is no issue in restricting our proof of Theorem \ref{thm:main} to H\"{o}lder measures on compact finite unions of intervals.
\begin{prop}
\label{prop:limit_Holder}
Suppose $\Sigma$ is a compact subset of $\R$ with at most countably many components and capacity strictly greater than $1$, and choose a probability measure $\mu$ with support contained in $\Sigma$ satisfying condition (1) of Theorem \ref{thm:main}. Then there is a sequence $\mu_1, \mu_2, \dots$ of H\"{o}lder probability measures and a sequence $\Sigma_1, \Sigma_2, \dots$ of closed subsets of $\Sigma$ such that
\begin{enumerate}
\item For every $k \ge 1$, $\Sigma_k$ is a compact finite union of intervals, and $\mu_k$ has support contained in $\Sigma_k$;
\item For every $k \ge 1$ and nonzero integer polynomial $Q$, $\int_{\Sigma_k} \log|Q| d\mu_k \ge 0$; and
\item The sequence $\mu_1, \mu_2, \dots$ \weaks converges to $\mu$.
\end{enumerate}
\end{prop}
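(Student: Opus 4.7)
The plan is a two-stage construction: first truncate $\Sigma$ to a compact finite union of non-degenerate intervals $\Sigma_k \subseteq \Sigma$ with $c_{\Sigma_k} > 1$, then regularize $\mu$ into a H\"older probability measure supported on $\Sigma_k$. The central tool is the unweighted equilibrium measure $\omega_k$ of $\Sigma_k$. For a compact finite union of non-degenerate intervals, $\omega_k$ has the classical density of the form $p(x)/\sqrt{\prod_i (x - a_i)(b_i - x)}$ on each component, so $\omega_k$ is itself $\tfrac{1}{2}$-H\"older; and the maximum principle applied to the harmonic function $U^{\omega_k}$ on $\C \setminus \Sigma_k$ gives $U^{\omega_k}(z) \le -\log c_{\Sigma_k}$ throughout $\C$, with equality on $\Sigma_k$. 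It follows that, for every nonzero integer polynomial $Q$ of degree $d$ with leading coefficient $a_n$ and complex roots $\alpha_1, \dots, \alpha_d$,
\[
\int \log|Q|\,d\omega_k \;=\; \log|a_n| - \sum_i U^{\omega_k}(\alpha_i) \;\ge\; d\,\log c_{\Sigma_k} \;>\; 0,
\]
so $\omega_k$ is a H\"older measure satisfying condition~(1) strictly, with margin linear in $\deg Q$.

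For the truncation I list the non-degenerate interval components of $\Sigma$ (countably many by hypothesis), let $\Sigma_k$ be the union of the first $k$ of them, each slightly trimmed at the endpoints so that $\Sigma_k \subseteq \Sigma$, and use continuity of logarithmic capacity under inner exhaustion to obtain $c_{\Sigma_k} > 1$ for large $k$. Note that $\mu$ has no atom at any algebraic point (the minimal polynomial of such a point would make $\int \log|Q|\,d\mu = -\infty$), so the $\mu$-mass not sitting in any $\Sigma_k$ lives on isolated point components and trimmed sliver regions, at transcendental points, and can be harmlessly redistributed to $\Sigma_k$.

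The approximant I propose is $\mu_k := (1 - \delta_k)\tilde\mu_k + \delta_k\,\omega_k$ with $\delta_k \to 0$, where $\tilde\mu_k$ is a H\"older probability measure on $\Sigma_k$ built from $\mu$ in two steps. First, outer balayage sweeps the mass of $\mu$ lying off a slightly shrunken copy $\Sigma_k^{\mathrm{in}} \subseteq \Sigma_k$ onto $\Sigma_k^{\mathrm{in}}$; the resulting measure $B\mu$ lives on $\Sigma_k^{\mathrm{in}}$, has the same total mass as $\mu$, and satisfies $U^{B\mu} \le U^\mu$ on $\C$, with equality off $\Sigma_k^{\mathrm{in}}$. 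Second, I partition $\Sigma_k$ into sub-intervals $J_1, \dots, J_N$ of diameter $\le \epsilon_k$ and replace the $B\mu$-mass on each $J_i$ by the same mass distributed proportionally to $\omega_k|_{J_i}$. This smoothing inherits its H\"older regularity from $\omega_k$, so $\mu_k$ is H\"older; and $\mu_k \to \mu$ in the weak-$^*$ topology as $\epsilon_k, \delta_k \to 0$ (since $\Sigma_k^{\mathrm{in}} \to \Sigma$ in Hausdorff distance and the two regularization steps have scale tending to zero).

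The principal obstacle is showing $\int \log|Q|\,d\mu_k \ge 0$ uniformly in $Q$. The balayage step does not decrease $\int \log|Q|$ for any nonzero integer polynomial $Q$: since $\log|Q|$ is subharmonic and $U^{B\mu} \le U^\mu$,
\[
\int \log|Q|\,dB\mu \;=\; \log|a_n| - \sum_i U^{B\mu}(\alpha_i) \;\ge\; \log|a_n| - \sum_i U^\mu(\alpha_i) \;=\; \int \log|Q|\,d\mu \;\ge\; 0.
\]
The redistribution step introduces a uniform potential error $\|U^{\tilde\mu_k} - U^{B\mu}\|_\infty \le \eta_k$ controlled by $\epsilon_k$ and the H\"older continuity of $U^{\omega_k}$, producing a loss of at most $d\,\eta_k$ in $\int \log|Q|\,d\tilde\mu_k$. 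This loss is then absorbed by the positive contribution $\delta_k d \log c_{\Sigma_k}$ from the $\omega_k$ term of $\mu_k$, provided $\eta_k \le \delta_k \log c_{\Sigma_k}$, which is easily arranged by choosing $\epsilon_k$ much smaller than $\delta_k$. The most delicate technical point is establishing this uniform potential bound $\eta_k$ for the redistribution step, particularly for test points $z$ near $\Sigma_k$ where both $U^{B\mu}$ and $U^{\tilde\mu_k}$ achieve their local maxima; handling atoms of $\mu$ (which, being at transcendental points, can be pre-spread over tiny neighborhoods of $\Sigma_k$ without affecting condition~(1)) is the remaining piece of bookkeeping.
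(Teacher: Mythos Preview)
Your overall architecture—truncate $\Sigma$ to a compact finite union of intervals, regularize $\mu$ to a H\"older measure, then mix in a small multiple of the equilibrium measure to absorb the potential error—matches the paper's strategy, and your balayage step for the truncation is essentially correct (modulo the additive Green-function constant $c_k=\int g_{\Omega_k}(\cdot,\infty)\,d\mu$ in the balayage identity, which you have suppressed but which does tend to $0$).

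The genuine gap is the redistribution step. The bound $\|U^{\tilde\mu_k}-U^{B\mu}\|_\infty\le\eta_k$ with $\eta_k\to 0$ fails whenever $B\mu$ has an atom, and your suggested patch of ``pre-spreading atoms over tiny neighborhoods'' does not repair it. Concretely: if $B\mu$ places mass $m$ at a transcendental point $p\in J_j$ and $z$ is the opposite endpoint of $J_j$, then $U^{B\mu}|_{J_j}(z)=-m\log|z-p|\approx -m\log\epsilon_k$, while redistributing that mass proportionally to $\omega_k|_{J_j}$ (or uniformly over any subinterval of length $\sim\epsilon_k$) gives potential $\approx m(-\log\epsilon_k+1)$ at $z$. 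The discrepancy is of order $m$, independent of $\epsilon_k$. Since you need $\eta_k\ll\delta_k\to 0$, this breaks the argument. This is not bookkeeping; naive local spreading simply cannot give the one-sided potential inequality you need.

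The paper's fix is to regularize by convolving with the explicit measure $\nu_\epsilon$ on $[\epsilon^2,\epsilon]$ given by $d\nu_\epsilon(t)=\frac{\epsilon^{3/2}\,dt}{\pi t\sqrt{(\epsilon-t)(t-\epsilon^2)}}$, which is itself the balayage of $\delta_0$ onto $[\epsilon^2,\epsilon]$. The balayage property yields $U^{\nu_\epsilon}(z)\le -\log|z|+O(\sqrt\epsilon)$ for all $z\in\C$, and hence $U^{\mu*\nu_\epsilon}\le U^\mu+O(\sqrt\epsilon)$ uniformly—even when $\mu$ has atoms. That is the inequality your $\delta_k\omega_k$ term is meant to absorb.

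There is also a second, smaller gap. For the \weaks convergence $\mu_k\to\mu$, you need the $\mu$-mass off $\Sigma_k$ to tend to zero, hence the isolated points of $\Sigma$ must carry no mass. Excluding atoms only at algebraic points does not suffice. The paper handles this by first proving $I(\mu)\le 0$ (Proposition~\ref{prop:general_energy}), which forces $\mu$ to be non-atomic outright.
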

We will prove this proposition in Section \ref{sec:limits}.

\begin{defn}
\label{defn:norm}
Fix a H\"{o}lder measure $\mu$ with support contained in the compact subset $\Sigma$ of $\R$. We define the \emph{weight} $w_{\mu}: \C \to \R^{> 0} \cup \{\infty\}$ associated to $\mu$ by $w_{\mu}(z) = \exp(U^{\mu}(z))$. Given a complex polynomial $P$ and a nonnegative integer $n$, we then define the \emph{$n$-norm of $P$ with respect to $(\mu, \Sigma)$} by
\[\max_{x \in \Sigma} \left(w_{\mu}(x)^n|P(x)|\right) \]
We will write this supremum norm as $\normf{P}$.
\end{defn}

The importance of the $n$-norm comes from the following lemma in weighted potential theory.
\begin{lem}
\label{lem:RSphere}
Choose a compact subset $\Sigma$ of $\R$, and choose a H\"{o}lder probability measure $\mu$ on $\Sigma$. Choose an infinite  sequence $P_1, P_2, \dots $ of real monic polynomials of increasing degree. We assume these polynomials have all roots in $\Sigma$. Suppose
\[\lim_{n \to \infty} \normff{P_n}{\deg P_n}^{1/\deg P_n}  = 1.\]
Then the measures $\mu_{P_1}, \mu_{P_2}, \dots$ converge to $\mu$ in the \weaks topology on $\Sigma$.
\end{lem}
% Tietze extension theorem
\begin{proof}
From Lemma \ref{lem:pot_Hold}, the functions $U^{\mu}$ and $w_{\mu}$ are finite and continuous on $\R$, so the energy $I(\mu)$ is finite. The latter implies that $\Sigma$ has positive capacity. As a result, in the language of \cite[Definition I.1.1]{SaTo97}, the function $w_{\mu}$ is an admissible weight. 

From \cite[Theorem III.1.9]{SaTo97}, there is an infinite sequence of real monic polynomials $Q_1, Q_2, \dots$ with roots contained in $\Sigma$ such that $Q_n$ has degree $n$ for all $n$ and such that
\[\lim_{n \to \infty} \normf{Q_n}^{1/n} = 1.\]
By inserting these polynomials at all missing degrees, we may assume without loss of generality that $P_n$ has degree $n$ for all $n$.

By \cite[Theorem I.3.1]{SaTo97}, the extremal measure associated to the weight $w_{\mu}$ is $\mu$, and the modified Robin constant is $0$ (see \cite[Theorem I.1.3]{SaTo97} for the definition of these terms). The result then follows from \cite[Theorem III.4.2]{SaTo97}.
\end{proof}

The following consequence of the Remez inequality will be used several times.
\begin{lem}
\label{lem:Remez}
Choose a compact finite union of intervals $\Sigma$ and take $\mu$ to be a H\"{o}lder probability measure with support contained in $\Sigma$. Then there is a $C > 0$ depending on $\Sigma$ and $\mu$ so the following holds:

Choose integers $n \ge 0$ and $m \ge 1$, and choose a complex polynomial $P$ of degree $m$. Take $\alpha$ to be some root of $P$, and take $Q(z)$ to be the polynomial $P(z)/(z - \alpha)$. Then
\begin{equation}
\label{eq:Remez}
\normf{Q} \le (2 \cdot\max(n, m))^C \cdot \normf{P}
\end{equation}
\end{lem}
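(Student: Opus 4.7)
The plan is to set $Q(z) = P(z)/(z - \alpha)$ and split the argument by the distance from $\alpha$ to $\Sigma$ at a threshold $r = (n+m+1)^{-K}$, for a constant $K = K(\Sigma, \mu)$ chosen sufficiently large. When $\mathrm{dist}(\alpha, \Sigma) \ge r$, the trivial estimate $|Q(y)| \le |P(y)|/r$ for $y \in \Sigma$ immediately yields $\norm{Q}_n \le (n+m+1)^K \norm{P}_n$. The interesting case is $\mathrm{dist}(\alpha, \Sigma) < r$: since $\Sigma \subset \R$, this forces $|\mathrm{Im}\,\alpha| < r$, and writing $\alpha = \alpha_R + i\alpha_I$ and $r' = \sqrt{r^2 - \alpha_I^2}$ I set $E = \{x \in \Sigma : |x - \alpha_R| \ge r'\}$. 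On $E$ the trivial bound still gives $|Q(x)| \le \norm{P}_n e^{-nU^\mu(x)}/r$, so only $y \in \Sigma \setminus E = \Sigma \cap [\alpha_R - r', \alpha_R + r']$ requires further work.

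For $K$ large enough that $r$ is smaller than the minimum gap between components of $\Sigma$, any such $y$ lies in a single component $I = [a, b]$ of $\Sigma$. I would then apply the Remez inequality to $Q$, which has degree $m-1$, on a subinterval $J \subset I$ of length $\ell = 2rm^2$ containing $y$; such a $J$ exists once $\ell \le b - a$, which holds for $n+m+1$ above some threshold once $K \ge 3$. Since $|J \cap [\alpha_R - r', \alpha_R + r']| \le 2r'$, we have $|E \cap J|/|J| \ge 1 - 1/m^2$, and Remez yields
\[
\max_{x \in J}|Q(x)| \le T_{m-1}\!\left(\tfrac{m^2+1}{m^2-1}\right)\cdot\max_{x \in E \cap J}|Q(x)|,
\]
with the Chebyshev asymptotic $T_{m-1}(1 + O(1/m^2)) = O(1)$ bounding the Remez factor by a constant depending only on $\Sigma$.

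To convert the bound on $\max_{E \cap J}|Q|$ into a bound on $e^{nU^\mu(y)}|Q(y)|$, I would invoke the H\"{o}lder continuity of $U^\mu$ from inequality \eqref{eq:pot_Hold}: for $x \in J$ we have $|U^\mu(y) - U^\mu(x)| \le A_1 \ell^{\eta_1} = A_1 (2rm^2)^{\eta_1}$, and choosing $K \ge 2 + 1/\eta_1$ makes $n\ell^{\eta_1} \le (n+m+1)^{1+2\eta_1-K\eta_1}\le 1$, so $e^{n(U^\mu(y) - U^\mu(x))}$ is bounded by a constant depending only on $\Sigma$ and $\mu$. Combining these estimates gives $e^{nU^\mu(y)}|Q(y)| = O((n+m+1)^K\norm{P}_n)$, and maximizing over $y$ yields $\norm{Q}_n \le (n+m+1)^{K+1}\norm{P}_n$ for $n+m+1 \ge N_0(\Sigma, \mu)$. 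The finitely many remaining cases $n+m+1 < N_0$ are absorbed into the constant $C$ using the equivalence of $\norm{\cdot}_n$ to any other norm on the finite-dimensional space of polynomials of degree at most $m$, together with the observation that for $|\alpha|$ large the crude bound $\norm{Q}_n \le \norm{P}_n/\mathrm{dist}(\alpha, \Sigma)$ always suffices.

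The main obstacle is the tension built into the choice of $\ell$: making $\ell$ smaller inflates the Remez factor $T_{m-1}$ exponentially in $m$, while making $\ell$ larger inflates the H\"{o}lder fluctuation $nA_1\ell^{\eta_1}$ and multiplies the estimate by an exponential in $n$. The scale $\ell \sim rm^2$ is essentially the unique choice balancing these two, and compatibility of this balance with the H\"{o}lder continuity of $U^\mu$ is what dictates the required lower bound on $K$.
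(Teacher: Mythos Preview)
Your proposal is correct and follows essentially the same strategy as the paper: bound $Q = P/(z-\alpha)$ via the Remez inequality on a short subinterval of $\Sigma$, with the interval length chosen so that (i) the set excluded near $\alpha$ has relative measure $O(m^{-2})$, making the Remez factor $T_{m-1}(1+O(m^{-2})) = O(1)$, and (ii) the H\"older fluctuation $nA_1\ell^{\eta_1}$ of $nU^\mu$ over the interval stays bounded. The paper makes the slightly cleaner choice of fixing the interval length at $\delta_0 = \min(\delta,(n+1)^{-1/\eta_1})$ and excising a disk of radius $\tfrac14 m^{-2}\delta_0$ about $\alpha$; this avoids your far/near case split entirely (when $\alpha$ is far from the chosen interval the excised set is simply empty and Remez is vacuous), and it also makes your closing appeal to ``finitely many remaining cases'' unnecessary.

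Two small wrinkles worth smoothing: your choice $\ell = 2rm^2$ degenerates at $m=1$, since when $\alpha$ is real the excised interval $[\alpha_R - r',\alpha_R + r']$ can have the same length as $J$ and $E\cap J$ may be empty---this case is of course trivial because $Q$ is then constant, but it should be flagged or the scale adjusted to, say, $\ell = 2r(m+1)^2$. Second, the final paragraph invoking equivalence of norms on finite-dimensional polynomial spaces is more delicate than stated, because $\alpha$ ranges over all of $\C$ and the map $P\mapsto P/(z-\alpha)$ is not linear; you correctly note that large $|\alpha|$ is handled by the crude bound, but in fact simply taking $K$ large enough from the start (so that $\ell$ is always below the minimum component length of $\Sigma$) makes the main argument uniform in $n,m$ and this coda superfluous.
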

\begin{proof}
Take $\delta$ to be the minimal length of a component in $\Sigma$, and choose $\eta \in (0, 1)$ so $\mu$ is $\eta$-H\"{o}lder. Take
\[\delta_0 = \min\left(\delta, (n+1)^{-1/\eta}\right).\]
Given an interval $I$ of length $\delta_0$ contained in $\Sigma$, there is $C_0 > 0$ depending only on $\mu$, $\Sigma$, and $\eta$ such that
\begin{equation}
\label{eq:Remez_help}
\left|nU^{\mu}(x) - nU^{\mu}(y)\right| \le C_0
\end{equation}
for all $x$ and $y$ in $I$ by \eqref{eq:pot_Hold}.

Take $\Sigma_0$ to be the intersection of $\Sigma$ with the disk of radius $\tfrac{1}{4}m^{-2} \delta_0$ centered at $\alpha$. Given $x_0$ in $\Sigma_0$, there is an interval $I$ contained in $\Sigma$ of length $\delta_0$ containing $x_0$ since $\delta_0 \le \delta$. The ratio of the length of $I \cap \Sigma_0$ to $I$ is at most $m^{-2}/2$. Take $\varphi: \C \to \C$ to be an affine transfomation mapping  $I$ bijectively onto $[-1, 1]$. Then $\varphi(I \cap \Sigma_0)$ has length at most $m^{-2}$. Applying the Remez-type inequality \cite[Theorem 1]{Erde92} to $\frac{1}{A}Q \circ \varphi^{-1}$ with $A = {\max}_{\,x \in I \backslash \Sigma_0} \,|Q(x)|$ gives
\[|Q(x_0)|  \le e^5 \cdot {\max}_{\,x \in I \backslash \Sigma_0} \,|Q(x)|.\]

Combining this with \eqref{eq:Remez_help} gives
\[w_{\mu}(x_0)^n|Q(x_0)| \,\le\, e^{5 + C_0} \cdot {\max}_{\,x \in I \backslash \Sigma_0} \left(w_{\mu}(x)^n|Q(x)|\right).\]
Applying this for all $x_0 \in \Sigma_0$ gives
\begin{align*}
\normf{Q} &\le e^{5 + C_0}\left|\left| w^n_{\mu} Q\right|\right|_{\Sigma \backslash \Sigma_0}\\
& \le\, e^{5 + C_0} \cdot 4m^2 \delta_0^{-1}\cdot \left|\left| w^n_{\mu} P\right|\right|_{\Sigma \backslash \Sigma_0}\\
& \le\, e^{5 + C_0} \cdot 4m^2 \delta_0^{-1} \cdot \normf{P},
\end{align*}
with the second inequality following from the definition of $\Sigma_0$. The lemma follows.
\end{proof}

Our next lemma concerns squarefree linear combinations of integer polynomials. The following definition will be useful.
\begin{defn}
Given a positive integer $k$ and nonzero integer polynomials $Q_1, \dots, Q_k$, we define  $\text{gcd}(Q_1, \dots, Q_k)$
to be the unique integer polynomial $G$ of maximal degree so that
\begin{itemize}
\item The leading term of $G$ is positive,
\item The greatest common denominator of the coefficients of $G$ is $1$ (that is, $G$ is primitive), and
\item For $1 \le i \le k$, $Q_i$ is divisible by $G$ in the polynomial ring over $\QQ$.
\end{itemize}
By Gauss's lemma, we see that $Q_i/G$ is an integer polynomial for $1 \le i \le k$.
\end{defn}

\begin{lem}
\label{lem:squarefree_nocommon}
Choose  integers $k\ge 1$ and $n\ge 0$, and choose nonzero integer polynomials $Q_1, \dots, Q_k$ of degree at most $n$. We assume some $Q_i$ has degree exactly $n$. Take $G = \textup{gcd}(Q_1, \dots, Q_k)$.

There are then nonnegative integers $b_2, \dots, b_k$ no larger than $2(n - \deg G)$ for which the integer polynomial
\[Q = \left(Q_1 + b_2 Q_2 + \dots + b_k Q_k\right)/G\]
is squarefree and has degree $n - \deg G$.
\end{lem}
\begin{proof}
If $n = \deg G$, the assignment $b_2 = \dots =b_k = 0$ satisfies the conditions of the lemma. So assume $n > \deg G$. Take $F$ to be the polynomial in $\C[x_2, \dots, x_k, t]$ defined by
\[F(x_2, \dots, x_k)(t) = (Q_1/G)(t) + x_2\cdot(Q_2/G)(t) + \dots  + x_k\cdot(Q_k/G)(t).\]
Given  polynomials $F_1, F_2 \in \C[x_2, \dots, x_k, t]$ such that $F = F_1F_2$, a consideration of degrees shows that one of $F_1$, $F_2$ lies in $\C[t]$. By the definition of $G$, this implies that one of $F_1, F_2$ lies in $\C$. So $F$ is irreducible. 

Since some $Q_i$ has degree $n$, $F$ has degree $n - \deg G > 0$ as a polynomial over $\C[x_2, \dots, x_k]$. So $\frac{\partial}{\partial t} F$ is nonzero and of smaller degree than $F$ and hence has no common nonconstant factor with $F$. As a consequence, the discriminant $\disc(F)$ of $F$ with respect to $t$ is a nonzero element of $\C[x_2, \dots, x_k]$. The determinantal definition of the discriminant shows this polynomial has total degree at most $2(n - \deg G) - 1$.

Given integers $b_2, \dots b_k$, the determinantal definition of the discriminant also gives us
\[\disc(F) (b_2, \dots, b_k) = \begin{cases} \disc (F(b_2, \dots, b_k)) \quad&\text{ if } F(b_2, \dots, b_k) \text{ has degree } n - \deg G \\ 0 & \text{ otherwise.} \end{cases}\]
An application of the multivariate Gregory--Newton formula \cite[(2) and (3)]{Salz45} shows that there are nonnegative integers $b_2, \dots, b_k$ such that $b_2 + \dots + b_k$ is at most $2(n -\deg G) - 1$ and such that $\disc(F)(b_2, \dots, b_k)$ is nonzero. From the above formula, these integers satisfy the conditions of the lemma.
\end{proof}

\subsection{Approximating polynomials}

Our eventual goal is to find monic integer polynomials whose associated counting measures converge to a given $\mu$. As a first step, it is convenient to have a sequence of monic real polynomials with this behavior.

\begin{defn}
\label{defn:approx}
Choose a compact subset $\Sigma$ of $\R$ and a H\"{o}lder probability measure $\mu$ with support contained in $\Sigma$. For every nonnegative integer $ i \le n$, we take $\alpha_i$ to be the minimal $\alpha$ in $\Sigma$ for which
\[\mu\big((-\infty, \alpha]\big) = i/n.\]

We then define the \emph{approximating polynomial} to $\mu$  of degree $n$ by
\[P_{n, \mu}(z) = \prod_{i = 1}^n  (z - \alpha_i).\]
The definition of $P_{n, \mu}$ gives
\[-n^{-1}\, \le \left(\mu_{P_{n, \mu}} - \mu\right)\big((-\infty, x]\big) \,\le\, 0 \quad\text{for all } x \in \R,\]
so we have
\begin{equation}
\label{eq:notelescope}
\left|\left(\mu_{P_{n, \mu}} - \mu\right)\big(I\big)\right|\, \le\, n^{-1} \quad\text{for any closed interval } I \subseteq \R.
\end{equation}
\end{defn}

\begin{prop}
\label{prop:three_pages}
Given a compact subset $\Sigma$ of $\R$ and a H\"{o}lder probability measure $\mu$ with support contained in $\Sigma$, there is a $C > 0$ determined from $\Sigma$ and $\mu$ so the following holds:

Choose any integer $n \ge 2$, and define $\alpha_1, \dots, \alpha_n$ and $P_{n, \mu}$ from $\mu$ as in Definition \ref{defn:approx}.  Then the energy $I(\mu)$ of $\mu$ satisfies
\begin{equation}
\label{eq:energy_approx}
n^{-C n } \cdot \exp\left(-\tfrac{1}{2} n^2I(\mu)\right) \,\le\, \prod_{1 \le i < j \le n} |\alpha_i - \alpha_j|.
\end{equation}
Furthermore, taking 
\[\rho_{n,\mu}(x)\, =\,\min_{\,1 \le i \le n} |x - \alpha_i|,\]
the weight $w_{\mu}(x) = e^{U^{\mu}(x)}$ satisfies
\begin{equation}
\label{eq:potential_approx}
n^{-C}\cdot \rho_{n, \mu}(x)    \,\le\, w_{\mu}(x)^n \cdot\left|P_{n, \mu}(x)\right|  \,\le\, n^C\cdot \rho_{n, \mu}(x)\quad\text{for all  }\, x\in \Sigma.
\end{equation}
\end{prop}
\begin{proof}
Choose $\eta \in (0, 1)$ so $\mu$ is $\eta$-H\"{o}lder. Choose $C_0 > 1$ so that $\Sigma$ is contained in $[-C_0, C_0]$ and so that \eqref{eq:Holder_condition} holds for $\eta$ with $C = C_0$. From this condition, we find that
\begin{equation}
\label{eq:Holder_spaced}
\alpha_{i+1} - \alpha_i \ge (C_0n)^{-1/\eta}
\end{equation}
for $i \ge 1$.

We take $\mu_n$ to be the counting measure associated to $P_{n, \mu}$ as in Definition \ref{defn:counting}. Given $x \in \R$ and $r > 0$, take $I(x, r)$ to be the interval $[x - r, x+r]$. Then \eqref{eq:Tonelli} gives
\begin{equation}
\label{eq:relTone}
U^{\mu_n}(x) - U^{\mu}(x) =  \int_0^{2C_0} r^{-1} \cdot \left(\mu_n - \mu\right)\big(I(x, r)\big) dr\quad\text{for all } \,x \in [-C_0, C_0].
\end{equation}
Our next goal is to estimate this integral, and we do this by breaking it into pieces. To start, take 
\[\delta = \tfrac{1}{3}(C_0n)^{-1/\eta}.\]
Then $I(x, \delta)$ contains at most one root of $P_{n, \mu}$ for any $x \in \R$ by \eqref{eq:Holder_spaced}. For a given real $x$, suppose $I(x, \delta)$ contains a root $\alpha$ of $P_{n, \mu}$. We then have
\[\int_0^{\delta} r^{-1} \mu_n\big(I(x, r)\big)dr = \int_{|\alpha - x|}^{\delta}n^{-1} r^{-1}dr = - n^{-1} \log\left(\delta^{-1} |\alpha - x|\right) \,\,\text{ in } \,\R \,\cup\, \{\infty\}.\]
If $I(x, \delta)$ contains no root, then this integral is $0$. By considering these cases separately, we see that
\begin{equation}
\label{eq:delta_mun}
\left| n^{-1} \log \rho_{n, \mu}(x) + \int_0^{\delta} r^{-1} \cdot \mu_n\big(I(x, r)\big)dr\right| \le  n^{-1}\log( \max( \delta^{-1}, \,2C_0)),
\end{equation}
for any $x$ in $[-C_0, C_0]$ other than the roots of $P_{n, \mu}$. This is $ \ll n^{-1} \log n$, where the implicit constants here and below will depend just on $\Sigma$, $\mu$, and $C_0$.

Next, for any $x \in \R$, \eqref{eq:Holder_condition} gives
\begin{equation}
\label{eq:delta_mu}
\int_0^\delta r^{-1} \mu(I(x, r))dr \le \int_0^{\delta} r^{-1} C_0(2r)^{\eta}dr = C_0 \eta^{-1}(2\delta)^{\eta}\ll n^{-1} \log n.
\end{equation}
Finally, for any $x \in \R$, we have
\begin{align}
\label{eq:del3c0}
\left|\int_{\delta}^{2C_0} r^{-1}(\mu_n - \mu)(I(x, r))dr\right| &\,\le\, \int_{\delta}^{2C_0}r^{-1} n^{-1} dr  \ll n^{-1} \log n
\end{align}
by \eqref{eq:notelescope}.

Summing \eqref{eq:delta_mun}, \eqref{eq:delta_mu}, and \eqref{eq:del3c0} then gives
\[\left| n^{-1} \log \rho_{n, \mu}(x) + \int_0^{2C_0} r^{-1} \cdot (\mu_n -\mu)\big(I(x, r)\big)dr\right|\, \ll\,  n^{-1} \log n.\]
By \eqref{eq:relTone}, we thus have
\begin{equation}
\label{eq:explicit_pot_approx}
\left|nU^{\mu}(x) - nU^{\mu_n}(x) - \log \rho_{n, \mu}(x)\right| \ll  \log n
\end{equation}
for any $x$ in $[-C_0, C_0]$ that is not a root of $P_{n, \mu}$. Since $P_{n, \mu}(x) = \exp(-nU^{\mu_n}(x))$, we may exponentiate this inequality to find that \eqref{eq:potential_approx} holds for sufficiently large $C$. As \eqref{eq:potential_approx} is vacuously true at the roots of $P_{n, \mu}$, we find that \eqref{eq:potential_approx} holds with this $C$ for all $x$ in $[-C_0, C_0]$.

We turn to the estimate for $I(\mu)$. Given $k \le n$, applying \eqref{eq:Tonelli} and \eqref{eq:Holder_condition} gives
\[\int_{\alpha_k - \delta}^{\alpha_k + \delta} \log |x - \alpha_k| d\mu(x) \ge\log \delta \cdot C_0(2\delta)^{\eta} - \int_0^\delta r^{-1} \mu(I(\alpha_k, r))dr.\]
By \eqref{eq:delta_mu}, this is $\gg  -n^{-1} \log n$. Since $\log|\rho_{n, \mu}(x)|$ is at least $\log \delta$ for $x$ outside the union of intervals $\bigcup_{k \le n} [\alpha_k - \delta, \alpha_k + \delta]$, we thus have
\begin{equation}
\label{eq:deliver_me}
\int \log |\rho_{n, \mu}(x)| d\mu(x) \gg - \log n. 
\end{equation}

Given $k \le n$, we have
\[\sum_{\substack{1 \le i \le n \\ i \ne k}}\log |\alpha_i - \alpha_k| = \lim_{x \to \alpha_k} -nU^{\mu_n}(x) - \log \rho_{n, \mu}(x),\]
so \eqref{eq:explicit_pot_approx} gives
\[\sum_{\substack{1 \le i, k \le n \\ i \ne k}} \log |\alpha_i - \alpha_k| + \sum_{i = 1}^n nU^{\mu}(\alpha_i)   \gg -n\log n.\]
The second sum  may be rewritten using the identity
\[\sum_{i=1}^n nU^{\mu}(\alpha_i) = -n^2\int\log|x - t| d\mu(x) d\mu_n(t) = n^2\int U^{\mu_n} d\mu,\]
and \eqref{eq:explicit_pot_approx} and \eqref{eq:deliver_me} give
\[n^2 \int_{\Sigma} (U^{\mu_n} - U^{\mu})d\mu  \ll n \log n.\]
So
\[\sum_{\substack{1 \le i, k \le n \\ i \ne k}} \log |\alpha_i - \alpha_k|  + n^2I(\mu) \gg  -n \log n,\]
and \eqref{eq:energy_approx} follows for sufficiently large $C$.
\end{proof}

\section{Squarefree  polynomials and the geometry of numbers}
\label{sec:GON}
The main intermediate result on our way to Theorem \ref{thm:main} is the following theorem on squarefree integer polynomials.
\begin{thm}
\label{thm:squarefree}
Choose a compact subset $\Sigma$ of $\R$, and choose a H\"{o}lder probability measure $\mu$ on $\Sigma$. Suppose 
\[\int \log|Q| d\mu \ge 0\]
for every nonzero integer polynomial $Q$.

Then there is a $C > 0$ determined from $\mu$ and $\Sigma$ so, for all degrees $n \ge 2$, there is a squarefree integer polynomial $R_n$ of degree $n$  satisfying
\[\normf{R_n} \le n^{C \sqrt{n}}.\]
\end{thm}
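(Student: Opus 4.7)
The plan is to apply Minkowski's theorem from the geometry of numbers to the lattice $\Z[z]_{\le n}$ inside the $(n+1)$-dimensional normed space $(\R[z]_{\le n}, \norm{\cdot}_n)$, combining a volume estimate that comes from the approximating polynomial $P_{n,\mu}$ with the squarefree linear-combination device of Lemma~\ref{lem:squarefree_nocommon}.

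First I would use the sample points $\alpha_0, \dots, \alpha_n$ from Definition~\ref{defn:approx} to build an inscribed box in the unit ball $B(1) := \{P \in \R[z]_{\le n} : \norm{P}_n \le 1\}$. Lagrange interpolation $P = \sum_j P(\alpha_j) L_j$ shows that if the weighted Lebesgue constant
\[\Lambda_n := \sup_{x \in \Sigma} \sum_{j=0}^{n} e^{n U^{\mu}(x) - n U^{\mu}(\alpha_j)} |L_j(x)|\]
can be bounded by $n^{O(\sqrt n)}$ (a bound I plan to establish from the H\"{o}lder continuity of $U^\mu$ together with Lemma~\ref{lem:Remez}), then the box $B' := \{P : \Lambda_n \cdot e^{n U^\mu(\alpha_j)} |P(\alpha_j)| \le 1 \text{ for all } j\}$ is contained in $B(1)$. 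Changing variables from coefficients to evaluations at the $\alpha_j$ introduces a Vandermonde factor $|V| = \prod_{i<j}|\alpha_i - \alpha_j|$, and combining the upper bound on $|V|$ from (\ref{eq:energy_approx}) with a H\"{o}lder-controlled Riemann-sum approximation $\sum_j U^\mu(\alpha_j) = (n+1) I(\mu) + O(n^{1-\eta})$ gives
\[\Vol(B(1)) \ge \Vol(B') \ge \left(\tfrac{2}{n^{C\sqrt n}}\right)^{n+1}\]
for some $C > 0$. Minkowski's first theorem then yields a nonzero integer polynomial $Q^{(1)} \in \Z[z]_{\le n}$ with $\norm{Q^{(1)}}_n \le n^{C\sqrt n}$.

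To promote this to a squarefree polynomial of degree exactly $n$, I would iterate the Minkowski argument on suitable quotient lattices (or invoke Minkowski's second theorem) to obtain several linearly independent short integer polynomials $Q^{(1)}, \dots, Q^{(k)} \in \Z[z]_{\le n}$ satisfying the same norm bound, with at least one of degree exactly $n$ and taken to be jointly coprime. Lemma~\ref{lem:squarefree_nocommon} then supplies integer coefficients $b_2, \dots, b_k \in [0, n(k+2)]$ for which $Q_n := Q^{(1)} + \sum_{j \ge 2} b_j Q^{(j)}$ is squarefree of degree $n$; the triangle inequality bounds $\norm{Q_n}_n$ by $k \cdot n(k+2) \cdot n^{C\sqrt n}$, which absorbs into $n^{C'\sqrt n}$ after enlarging the constant.

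The main obstacle is threading the $\sqrt n$ exponent through the volume calculation. The integrality hypothesis already forces $\log\norm{Q}_n \ge n I(\mu)$ for every nonzero integer $Q$ (since the supremum of $\log|Q| + n U^\mu$ on $\Sigma$ dominates its $\mu$-average), so the desired upper bound $n^{C\sqrt n}$ is consistent only when $I(\mu)$ is small. Achieving the matching upper bound demands that the $e^{-(n+1)^2 I(\mu)/2}$ factor from the Vandermonde estimate cancel precisely against the $e^{n(n+1) I(\mu)}$ factor coming from the potential sum, leaving only the sub-polynomial $\Lambda_n$ contribution and the Riemann-sum remainder $O(n^{1-\eta})$. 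Balancing these against each other so as to extract $\sqrt n$ in the exponent (rather than $n$ or $n\log n$) is the delicate part of the argument, and is where the square root ultimately comes from.
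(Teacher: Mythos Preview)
Your volume computation does in fact give a nonzero integer polynomial $Q^{(1)}$ with $\norm{Q^{(1)}}_n \le n^{O(1)}$; the cancellation between the Vandermonde factor $e^{-n^2 I(\mu)/2}$ and the potential sum $e^{n(n+1)I(\mu)}$ is exact up to $n^{O(n)}$ factors already recorded in \eqref{eq:energy_approx} and \eqref{eq:potential_approx}, and there is no $\sqrt n$ left over.  So the $\sqrt n$ exponent is not coming from the volume step at all.

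The real gap is the sentence ``iterate the Minkowski argument \dots\ to obtain several linearly independent short integer polynomials \dots\ with at least one of degree exactly $n$ and taken to be jointly coprime.''  Minkowski's second theorem gives $\prod_i \lambda_i \le n^{Cn}e^{n^2 I(\mu)/2}$, but when $I(\mu)<0$ this is perfectly compatible with the first few $\lambda_i$ being exponentially small and the last few being exponentially large; nothing prevents every short lattice vector from having low degree or a common factor.  You cannot simply postulate a short basis element of degree $n$.

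The paper's proof confronts exactly this obstruction.  It orders the successive minima $P_0,\dots,P_n$, introduces the \emph{usable degree} of $\{P_0,\dots,P_k\}$ (the degree of their span modulo their gcd), and takes $m$ to be the largest usable degree below $n-\sqrt n$.  Writing the gcd as $P$, Lemma~\ref{lem:squarefree_nocommon} and Theorem~\ref{thm:adjust} rebuild $m$ linearly independent polynomials of the form $PR_i$, all of norm $\le n^{C}\norm{P_{m}}_n$.  Feeding these into the matching \emph{lower} bound of Proposition~\ref{prop:squeeze} with $R=P$ (which uses the integrality hypothesis on $P$) yields
\[
\prod_{i>m}\norm{P_i}_n \;\le\; n^{Cn}\exp\!\bigl(\tfrac12(n-m)^2 I(\mu)\bigr)\;\le\; n^{Cn},
\]
and since there are more than $\sqrt n$ factors on the left, $\norm{P_{m+1}}_n\le n^{C\sqrt n}$.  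By construction $P_0,\dots,P_{m+1}$ already has usable degree $\ge n-\sqrt n$, so a second application of Lemma~\ref{lem:squarefree_nocommon} and at most $\sqrt n$ iterations of Lemma~\ref{lem:Remez} produce a squarefree $R^\circ$ of degree $\ge n-\sqrt n$ with the same norm bound; padding by linear factors gives degree exactly $n$.  This pincer between Proposition~\ref{prop:Hilbert} and Proposition~\ref{prop:squeeze}, pivoting on the extracted common factor, is where the $\sqrt n$ genuinely comes from, and it is entirely absent from your outline.
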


The proof of this requires two different applications of the geometry of numbers. First, we have the following application of the flatness theorem to integer polynomials. This theorem will also be needed in the path from Theorem \ref{thm:squarefree} to Theorem \ref{thm:main}.
\begin{thm}
\label{thm:adjust}
There is a positive real number $C$ so we have the following:

Take $c$ and $n$ to be positive integers, and suppose we have a squarefree integer polynomial $Q$ with factorization 
\[Q(z) = c (z - \alpha_1) (z - \alpha_2) \dots (z - \alpha_n)\]
over $\C$, so the $\alpha_i$ are all distinct. For $i \le n$, define a degree $n-1$ polynomial
\[Q_i(z) = Q(z)/(z - \alpha_i).\]

Given any real polynomial $P(z)$ of degree at most $n - 1$, there then are complex numbers $\beta_1, \dots, \beta_n$ satisfying
\[\sum_{i \le n} |\beta_i| \le C n \log 2n\]
for which
\[P(z) - \sum_{i \le n} \beta_i Q_i(z)\]
 is an integer polynomial.
\end{thm}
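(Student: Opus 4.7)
The plan is to recast the statement as a covering-radius problem in the geometry of numbers and then apply the flatness theorem. Since the $\alpha_i$ are distinct, $P_1,\ldots,P_n$ form a basis of $\C[z]_{<n}$ satisfying the dual-basis identity $P_i(\alpha_j)=\delta_{ij}P'(\alpha_i)$. Writing $Q-R=\sum_i\beta_iP_i$ and evaluating at $\alpha_j$ gives $\beta_j=(Q(\alpha_j)-R(\alpha_j))/P'(\alpha_j)$, so the theorem reduces to showing that the lattice
\[\Lambda=\{(R(\alpha_i)/P'(\alpha_i))_i\,:\,R\in\Z[z]_{<n}\},\]
viewed inside the real $n$-dimensional subspace of $\C^n$ cut out by conjugation symmetry, has covering radius at most $Cn\log 2n$ in the $\ell^1$ norm $\sum_i|\beta_i|$.

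First I would carry out the standard volume computation: using the Vandermonde change of coordinates $p\mapsto(p(\alpha_i))$ and the identity $\prod_i|P'(\alpha_i)|=c^n|\det V|^2$, together with the integrality $|\disc(P)|\geq 1$ coming from $P$ being a squarefree integer polynomial, Minkowski's second theorem yields a bound on the successive minima of the form $\prod_k\lambda_k(\Lambda)\leq n!$, so that $\sum_k\log\lambda_k=O(n\log n)$.

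The main obstacle is converting this bound on $\prod\lambda_k$ into a covering-radius bound of matching quality; naively one has $\mu\leq\tfrac12\sum\lambda_k$, which could be dominated by $\lambda_n$ alone. For this I would invoke the flatness theorem in its transference form: $\mu(\Lambda,K)\cdot\lambda_1(\Lambda^*,K^*)\leq F_n$, with $K$ the $\ell^1$-ball, $K^*$ the $\ell^\infty$-ball, and $F_n=O(n\log n)$ the flatness constant for symmetric convex bodies. A partial-fractions computation identifies $\Lambda^*$ (in the monic case $c=1$) with $\{(A(\alpha_i))_i\,:\,A\in\Z[z]_{<n}\}$; the case $c>1$ requires replacing $\Z[z]_{<n}$ by the sublattice encoding the divisibility conditions that $A/P$ have integer Laurent coefficients at $\infty$, but the argument adapts.

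It then suffices to show $\lambda_1(\Lambda^*,K^*)\geq 1$, i.e., that $\max_i|A(\alpha_i)|\geq 1$ for every nonzero $A\in\Z[z]_{<n}$. If $\gcd(A,P)=1$, then $\prod_i|A(\alpha_i)|=|\res(A,P)|\geq 1$ is a nonzero integer, and the AM-GM inequality gives the desired pointwise bound. If instead $g:=\gcd(A,P)\ne 1$, then factor $A=gA^\flat$ with $A^\flat$ coprime to $P$ (possible since $P$ is squarefree). On the roots $\alpha$ of $P/g$ one has $A(\alpha)=g(\alpha)A^\flat(\alpha)$, and the product of $|A(\alpha)|$ over such $\alpha$ factors as $|\res(g,P/g)|\cdot|\res(A^\flat,P/g)|$, again a product of nonzero integers; applying AM-GM on the $(n-\deg g)$ nonzero terms finishes the bound. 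Combining with the transference inequality then yields $\mu\leq F_n=O(n\log n)$, as required.
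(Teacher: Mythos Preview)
Your proposal is correct and follows essentially the same route as the paper: reduce to a covering-radius problem, bound the first minimum of the dual by $1$ using algebraic integrality, and finish with the flatness theorem. A few comparative remarks are worth making.

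First, your Minkowski paragraph (bounding $\prod_k\lambda_k$ via the discriminant) is never used once you pass to the transference/flatness step; you can delete it.

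Second, the paper carries out the same width bound but in the coefficient picture rather than the evaluation picture. Instead of showing $\max_i|A(\alpha_i)|\ge 1$ for nonzero $A\in\Z[z]_{<n}$, it shows that for any nonzero integer vector $(d_j)$ the values $\sum_j d_j b_{ij}$, with $b_{ij}$ the $z^j$-coefficient of $P_i$, form a Galois-stable set of algebraic integers, hence are the roots of a monic integer polynomial; if not all zero, one of them has modulus $\ge 1$. This is the same integrality fact you are using, phrased dually. The Galois formulation has two practical advantages: it handles $c>1$ with no extra bookkeeping (Gauss's lemma shows the $b_{ij}$ are algebraic integers regardless of $c$), and it absorbs your $\gcd(A,P)\ne 1$ case automatically, since a monic integer polynomial with some zero roots still has its nonzero roots multiplying to a nonzero integer after dividing out the $z$-power. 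Your resultant argument with the $g=\gcd(A,P)$ factorisation is fine, but it is doing by hand what the Galois phrasing gives for free.

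Finally, the paper invokes the flatness theorem for simplices (applied to the nonnegative orthant of your cross-polytope), which is enough since any simplex contained in $K$ inherits the covering property; your symmetric-body transference formulation is equivalent.
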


\begin{proof}
Take $E = \QQ(\alpha_1, \dots, \alpha_n)$. For $i$ in $\{1, 2, \dots, n\}$ and $j$ in $\{0, 1, \dots, n-1\}$, take $b_{ij}$ to be the degree $j$ coefficient of $Q_i$. These coefficients are all algebraic integers. Otherwise, some $b_{ij}$ would have negative valuation at some prime of $E$, and this cannot happen by Gauss's lemma since discrete valuation rings are unique factorization domains.

Note that a field automorphism of $E$ that takes $\alpha_i$ to $\alpha_k$ will also take $Q_{i}$ to $Q_{k}$. For any rational integers $d_0, \dots, d_{n-1}$, we may conclude that
\[\left\{ \sum_{0 \le j \le n-1} d_j b_{ij} \,:\,\, 1 \le i \le n\right\}\]
is the set of roots to a monic integer polynomial. In particular, if any element in this set is nonzero, there is some element in this set of absolute value at least one. But the $Q_i$ are linearly independent, so these sums are all zero only if $d_j = 0$ for each $j$.

Take $Q^{\circ}_1, \dots, Q^{\circ}_n$ to be a list of polynomials containing $Q_i$ for every $i \le n$ for which $\alpha_i$ is real and containing 
\[\tfrac{1}{\sqrt{2}}\left(Q_i + \overline{Q_i}\right)\quad\text{and}\quad \tfrac{i}{\sqrt{2}} (Q_i - \overline{Q_i})\]
for every $i \le n$ for which $\overline{\alpha_i} = \alpha_j$ for some $j > i$. For every sequence of rational integers $d_0, \dots, d_{n-1}$ that are not all zero, the above argument shows there is some $i \le n$ so that, if we write $Q^{\circ}_i(z)$ in the form $b^{\circ}_{n-1}z^{n-1} + \dots + b^{\circ}_0$, we have
\[\left| d_0b^{\circ}_0 + \dots + d_{n-1}b^{\circ}_{n-1}\right| \ge 1.\]
 We can conclude from the flatness theorem for simplices \cite[Corollary 2.5]{BLPS_flat99} that, for some absolute $C_0 > 0$, any translate of the convex body
\[K = \left\{ \sum_{i \le n} \beta_i Q^{\circ}_i \,:\,\,  \sum_{i \le n} |\beta_i| \le C_0 n \log 2n \right\}\]
contains an integer polynomial. This suffices to prove the theorem.
\end{proof}

Combining this theorem with the Remez inequality gives the following useful corollary.
\begin{cor}
\label{cor:adjust}
Fix a compact finite union of intervals $\Sigma$ and a H\"{o}lder probability measure $\mu$ with support contained in $\Sigma$. Choose an integer $n > 1$, a real polynomial $P$ of degree at most $n-1$, and a squarefree integer polynomial $Q$ of degree $n$.

Then there is an integer polynomial $R$ of degree at most $n-1$ so that
\[\normf{(P - R)} \le n^C \cdot \normf{Q},\]
where $C > 0$ just depends on $\Sigma$ and $\mu$.
\end{cor}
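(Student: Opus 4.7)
The plan is to combine Theorem \ref{thm:adjust} with Lemma \ref{lem:Remez} in a direct manner: Theorem \ref{thm:adjust} supplies a representation of $Q$ modulo an integer polynomial as a small linear combination of the polynomials $P_i(z) = P(z)/(z-\alpha_i)$, and Lemma \ref{lem:Remez} tells us that each $P_i$ has $n$-norm that exceeds $\norm{P}_n$ by only a polynomial-in-$n$ factor. Putting these together and applying the triangle inequality controls $\norm{Q - R}_n$.

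In more detail, I would proceed as follows. Factor $P(z) = c \prod_{i \le n}(z - \alpha_i)$ over $\C$; since $P$ is squarefree, the $\alpha_i$ are distinct, so the hypotheses of Theorem \ref{thm:adjust} are met. Applying that theorem to the given real polynomial $Q$ produces complex numbers $\beta_1, \dots, \beta_n$ with $\sum_{i \le n} |\beta_i| \le C_0\, n \log 2n$ such that
\[
R(z) := Q(z) - \sum_{i \le n} \beta_i P_i(z)
\]
is an integer polynomial. Since $\deg Q \le n-1$ and $\deg P_i = n-1$, we have $\deg R \le n-1$, as required. Now $Q - R = \sum_{i \le n} \beta_i P_i$, so by the triangle inequality for $\norm{\cdot}_n$,
\[
\norm{Q - R}_n \le \sum_{i \le n} |\beta_i| \cdot \norm{P_i}_n.
\]
Applying Lemma \ref{lem:Remez} to $P$ with its root $\alpha_i$ (with $m = n$) gives $\norm{P_i}_n \le (2n+1)^{C_1} \norm{P}_n$ for each $i$, where $C_1$ depends only on $\Sigma$ and $\mu$. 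Combining,
\[
\norm{Q - R}_n \le C_0\, n \log 2n \cdot (2n+1)^{C_1} \cdot \norm{P}_n,
\]
and the right-hand side is bounded by $n^C \cdot \norm{P}_n$ for a suitable $C$ depending only on $\Sigma$ and $\mu$.

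There is no real obstacle here; the corollary is essentially a packaging of Theorem \ref{thm:adjust} and Lemma \ref{lem:Remez}. The only minor subtlety is verifying that the $\beta_i$ are complex numbers but the resulting $R$ is a genuine integer polynomial, which is guaranteed by Theorem \ref{thm:adjust} itself (and compatible with the fact that $Q$ is real, since any imaginary parts introduced by the $\beta_i P_i$ sum must cancel to give an integer polynomial). No norm estimate is lost at this step: the triangle inequality applied to the complex combination is what we need.
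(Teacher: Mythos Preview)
Your proposal is correct and follows exactly the same approach as the paper: apply Theorem~\ref{thm:adjust} to obtain $R$ and the $\beta_i$, then bound $\norm{Q-R}_n = \norm{\sum_i \beta_i P_i}_n$ using the triangle inequality together with Lemma~\ref{lem:Remez} and the bound $\sum_i |\beta_i| \le C_0 n \log 2n$. The paper's proof is simply a more compressed statement of the same argument.
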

\begin{proof}
Take $R$ to be the polynomial $P - \sum_{i \le n}\beta_i Q_i$ constructed in Theorem \ref{thm:adjust}. The upper bound on the norm of $P - R$ follows from Lemma \ref{lem:Remez} and the bounds on the $\beta_i$.
\end{proof}

Our second use of the geometry of numbers is an application of Minkowski's results on successive minima to integer polynomials. The precedent for this result comes from Hilbert \cite{Hilbert94}, who proved a similar result in the case where $\Sigma$ is an interval and $\mu$ is the unweighted equilibrium distribution, and from Amoroso \cite{Amor90} and Pritsker \cite[Theorem 2.1]{Prit05}, who proved weighted versions of Hilbert's result.
\begin{prop}
\label{prop:Mink}
Choose an integer $m \ge 0$, distinct real numbers $\alpha_1, \dots, \alpha_{m+1}$, and positive real numbers $v_1, \dots, v_{m+1}$. Define $K$ to be the set of real polynomials $P$ of degree at most $m$ that satisfy
\[v_i \cdot |P(\alpha_i)| \le 1 \quad\text{for all }\, 1 \le i \le m+1.\]
For a nonnegative integer $i$ satisfying $i \le m$, take $\lambda_i$ to be the least real number so $\lambda_i K$ contains at least $i+1$ linearly independent integer polynomials. Then
\[\frac{1}{(m+1)!}D  \,\le\,  \lambda_0 \cdot\lambda_1\cdot \dots \cdot\lambda_m \,\le\,  D ,\]
where we have taken the notation
\begin{equation}
\label{eq:Mink_D}
D = \prod_{i = 1}^{m+1} v_i\,\cdot\, \prod_{1 \le i < j \le m+1} |\alpha_i - \alpha_j|.
\end{equation}
\end{prop}
\begin{proof}
By identifying the real polynomial $a_0 + a_1z + \dots + a_mz^m \in \R[z]$ with the tuple $(a_0, a_1, \dots, a_m)$, we may think of $K$ as a convex, centrally-symmetric subset of $\R^{m+1}$, with $(a_0, a_1, \dots, a_m)$ lying in $K$ if and only if
\begin{equation}
\label{eq:K_alt}
\big|a_0 +a_1\alpha_k \dots + a_m\alpha_k^{m}\big| \le v_k^{-1}  \quad\text{for } 1 \le k \le m+1.
\end{equation}
Minkowski's second theorem \cite[p. 376]{Gruber07} then shows
\[ \frac{2^{m+1}}{(m+1)!}   \Vol(K)^{-1} \,\le\,\lambda_0 \cdot\lambda_1\cdot \dots \cdot\lambda_m\,\le\,  2^{m+1} \Vol(K)^{-1}.\]
To prove the proposition, we now just need to estimate the volume of $K$. Consider the Vandermonde matrix
\[V = \left(\begin{matrix} 1 & \alpha_1 & \dots & \alpha_1^m \\ 1 & \alpha_2 & \dots & \alpha_{2}^m \\ \vdots & \vdots & \ddots & \vdots \\ 1 & \alpha_{m+1} & \dots & \alpha_{m+1}^m \end{matrix} \right).\]
Take
\[K_1 := \left[-v_1^{-1},\, v_1^{-1}\right] \times \dots \times \left[-v_{m+1}^{-1},\, v_{m+1}^{-1}\right] \subseteq \R^{m+1}.\]
Then \eqref{eq:K_alt} is equivalent to the statement that $K$ is the preimage of $K_1$ under the linear map $V$. The set $K_1$ has volume $2^{m+1} \cdot \prod_{i=1}^{m+1} v_i^{-1}$, and the Vandermonde matrix has determinant
\[\det V = \prod_{1 \le i < j \le m+1}\left(\alpha_j - \alpha_i\right),\]
so we may conclude that 
\[\text{Vol}\, K = |\det V|^{-1} \cdot \text{Vol} \, K_1 = 2^{m+1} \cdot \prod_{i=1}^{m+1} v_i^{-1} \cdot  \prod_{1 \le i < j \le m+1}\left|\alpha_i - \alpha_j\right|^{-1}. \]
The result follows.
\end{proof}

\begin{prop}
\label{prop:squeeze}
Take $\mu$ to be a H\"{o}lder probability measure with support contained in a compact subset $\Sigma$ of $\R$. Choose nonnegative integers $n$ and $m$ and a nonzero integer polynomial $R$, and take $Q_0, Q_1, \dots, Q_m$ to be linearly independent integer polynomials of degree at most $m$. Then, if $\int_{\Sigma} \log|R| d\mu$ is nonnegative, we have
\[\prod_{i = 0}^m \normf{Q_i R} \ge\frac{1}{(m+1)!} \cdot \exp\left(\left(mn - \tfrac{1}{2}m^2  + n - \tfrac{1}{2}m\right) \cdot  I(\mu)\right).\]
\end{prop}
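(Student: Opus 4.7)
The plan is to apply Minkowski's second theorem on successive minima in the $(m+1)$-dimensional real vector space $V = \{S \in \R[z] : \deg S \le m\}$ equipped with the lattice $\Lambda = V \cap \Z[z]$, using a convex body built from the $n$-norm and the polynomial $R$, and to bound that body's volume by picking $m+1$ evaluation points at random from $\mu$.

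First I would introduce the convex symmetric body
\[K^* = \{S \in V : \norm{RS}_n \le 1\}.\]
Since $\mu$ is H\"older it is non-atomic, so its support is infinite and $S \mapsto \norm{RS}_n$ is a genuine norm on $V$, making $K^*$ a bounded centrally symmetric body. Relabeling $P_0, \dots, P_m$ so that $\norm{P_0R}_n \le \cdots \le \norm{P_mR}_n$, the first $i+1$ of them supply $i+1$ linearly independent lattice points inside $\norm{P_iR}_n \cdot K^*$, so the successive minima $\lambda^*_0 \le \cdots \le \lambda^*_m$ of $K^*$ with respect to $\Lambda$ satisfy $\lambda^*_i \le \norm{P_iR}_n$. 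Minkowski's second theorem then yields
\[\prod_{i=0}^m \norm{P_iR}_n \;\ge\; \prod_{i=0}^m \lambda^*_i \;\ge\; \frac{2^{m+1}}{(m+1)!\,\Vol(K^*)}.\]

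Second I would estimate $\Vol(K^*)$ from above by dropping all but $m+1$ of its defining constraints. For any distinct $\beta_0, \dots, \beta_m \in \Sigma$ avoiding the zero set of $R$, we have
\[K^* \subseteq \left\{S \in V : e^{nU^\mu(\beta_i)}|R(\beta_i)|\cdot |S(\beta_i)| \le 1 \text{ for all } i\right\},\]
and the Vandermonde computation from the proof of Proposition \ref{prop:Mink} gives that the larger body has volume $2^{m+1}/(v\Delta)$, where $v = \prod_i e^{nU^\mu(\beta_i)}|R(\beta_i)|$ and $\Delta = \prod_{i<j}|\beta_i - \beta_j|$. Combining with the first step reduces the proposition to finding $\beta_i$ with $v\Delta \ge \exp\!\big((m+1)(n - m/2)I(\mu)\big)$.

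Third I would produce such a choice probabilistically: draw $\beta_0, \dots, \beta_m$ independently from $\mu$. Since $\mu$ is non-atomic and $R$ has only finitely many roots, the $\beta_i$ are almost surely distinct and away from the zeros of $R$. Using linearity of expectation together with the identities $\int U^\mu\, d\mu = I(\mu)$ and $\iint \log|z-w|\,d\mu(z)\,d\mu(w) = -I(\mu)$,
\[\E[\log(v\Delta)] = (m+1)\!\left[nI(\mu) + \int\log|R|\,d\mu\right] - \binom{m+1}{2}I(\mu) \;\ge\; (m+1)(n - \tfrac{m}{2})\,I(\mu),\]
where the inequality uses the hypothesis $\int \log|R|\,d\mu \ge 0$. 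Some realization thus satisfies the required bound.

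The main place the hypotheses enter is the averaging of the final step, and the only nontrivial check is that $\binom{m+1}{2}/(m+1) = m/2$ so the target exponent $(m+1)(n - m/2)I(\mu)$ arises exactly from the $m+1$-point mean energy --- this algebraic match is what selects precisely $m+1$ sample points. The remaining work is routine Minkowski and Vandermonde.
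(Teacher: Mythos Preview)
Your argument is correct and is essentially the paper's proof, only reorganized: the paper first fixes the evaluation points $\alpha_0,\dots,\alpha_m$ by the same averaging argument and then invokes Proposition~\ref{prop:Mink} on the parallelepiped $\{P:\;|R(\alpha_i)|e^{nU^\mu(\alpha_i)}|P(\alpha_i)|\le 1\}$, whereas you apply Minkowski directly to the norm-ball $K^*$ and bound its volume by passing to that same parallelepiped. The probabilistic step, the Vandermonde volume, and the final exponent all match.
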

\begin{proof}
Given $x_1, \dots, x_{m+1} \in \Sigma$, define
\[F(x_1, \dots, x_{m+1}) = \prod_{i = 1}^{m+1} |R(x_i)| w_{\mu}^n(x_i) \cdot \prod_{1 \le i  < j \le m+1} |x_i - x_j|.\]
We then have
\begin{align*}
&\int_{\Sigma} \int_{\Sigma} \dots \int_{\Sigma} \log\left|F(x_1, \dots, x_{m+1})\right| d\mu(x_1) d\mu(x_2) \dots d\mu(x_{m+1})\\
&\qquad = \, (m+1) \cdot \left(\int_{\Sigma} (\log|R| + nU^{\mu}) d\mu \right)\,-\, \tfrac{1}{2}(m^2 + m)\cdot I(\mu)\\
&\qquad  \ge\, \left(mn - \tfrac{1}{2}m^2  + n - \tfrac{1}{2}m\right)\cdot  I(\mu),
\end{align*}
with the equality following from the linearity of integration and the inequality following from the assumption that $\int_{\Sigma} \log|R| d\mu$ is nonnegative. From this inequality, we may choose $\alpha_1, \dots, \alpha_{m+1}$ in $\Sigma$ for which we have
\begin{equation}
\label{eq:Falpha}
F(\alpha_1, \dots, \alpha_{m+1}) \ge \exp\left(\left(mn - \tfrac{1}{2}m^2  + n - \tfrac{1}{2}m\right)\cdot  I(\mu)\right).
\end{equation}

Permuting the $Q_k$ if necessary, we assume that
\[\max_{1 \le i \le m+1} \left(w_{\mu}^n(\alpha_i) \cdot \big|(R\cdot Q_k)(\alpha_i) \big|\right)\]
is nondecreasing as $k$ increases. If we take $v_i$ to equal to $|R(\alpha_i)|w_{\mu}^n(\alpha_i)$ for $1 \le i \le m+1$, we see that the right hand side of \eqref{eq:Mink_D} equals $F(\alpha_1, \dots, \alpha_{m+1})$.  Applying Proposition \ref{prop:Mink}, we see that the $\lambda_k$ appearing in the proposition statement satisfy
\[\lambda_k  \,\le\, \max_{1 \le i \le m+1} \left(w_{\mu}^n(\alpha_i) \cdot \big|(R\cdot Q_k)(\alpha_i) \big| \right) \,\le\, \normf{RQ_k}\]
for $k \le m$, so the proposition gives
\[\frac{F(\alpha_1, \dots, \alpha_{m+1})}{(m+1)!}   \,\le\, \prod_{k=0}^m \normf{RQ_k}, \]
and the result follows from \eqref{eq:Falpha}.
\end{proof}

In the case where $R = 1$, $n = m$, and $\Sigma$ is a compact finite union of intervals, this bound is nearly sharp. Specifically, we have the following result.
\begin{prop}
\label{prop:Hilbert}
Choose a compact finite union of intervals $\Sigma$, choose a  H\"{o}lder probability measure $\mu$ with support contained in $\Sigma$, and choose an integer $n \ge 2$. Then there is a sequence $Q_0, \dots, Q_n$ of linearly independent integer polynomials of degree at most $n$ so
\begin{equation}
\label{eq:Hilbert}
\prod_{i = 0}^n \normf{Q_i} \,\le\, n^{Cn} \exp\left(\tfrac{1}{2} n^2 I(\mu)\right),
\end{equation}
where $C> 0$ depends just on $\mu$ and $\Sigma$.
\end{prop}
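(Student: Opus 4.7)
The plan is to build $P_0, \dots, P_n$ directly via Proposition \ref{prop:Mink}, mirroring the lower bound established in Proposition \ref{prop:squeeze}. I would choose $n+1$ interpolation nodes $\alpha_0, \dots, \alpha_n$ in $\Sigma$ --- a mild modification of the quantile points of Definition \ref{defn:approx}, taken so each interval $[\alpha_{i-1}, \alpha_i]$ carries $\mu$-mass approximately $1/(n+1)$ --- together with weights $w_j = e^{nU^{\mu}(\alpha_j)}$. Proposition \ref{prop:Mink} then produces linearly independent integer polynomials $P_0, \dots, P_n$ of degree at most $n$ with successive minima $\lambda_0, \dots, \lambda_n$ satisfying
\[\lambda_0 \lambda_1 \cdots \lambda_n \,\le\, D \,:=\, \prod_{j=0}^n w_j \cdot \prod_{0 \le i < j \le n} |\alpha_i - \alpha_j|,\]
and each $P_i$ lies in $\lambda_i K$, meaning $w_j |P_i(\alpha_j)| \le \lambda_i$ for every $j$.

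The core asymptotic estimate to carry out is $D \le n^{Cn} \exp(n^2 I(\mu)/2)$. The H\"{o}lder continuity \eqref{eq:pot_Hold} of $U^{\mu}$, combined with the near-equidistribution of the $\alpha_j$, yields the Riemann-sum estimate $\sum_j U^{\mu}(\alpha_j) = (n+1) I(\mu) + O(1)$, so $\prod_j w_j = \exp\bigl(n(n+1)I(\mu) + O(n)\bigr)$. Adapting the proof of \eqref{eq:energy_approx} from $n$ to $n+1$ approximating points gives $\prod_{i<j} |\alpha_i - \alpha_j| \le n^{Cn} \exp\bigl(-(n+1)^2 I(\mu)/2\bigr)$. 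Multiplying these, the exponent of $e$ in $D$ becomes $\tfrac{1}{2}(n^2-1) I(\mu)$, which differs from the target $\tfrac{1}{2} n^2 I(\mu)$ by a bounded constant that is absorbed into $n^{Cn}$.

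The main obstacle is upgrading the pointwise control $|P_i(\alpha_j)| \le \lambda_i/w_j$ to the weighted sup-norm control $\|P_i\|_n \le n^{C'} \lambda_i$, after which the conclusion $\prod_i \|P_i\|_n \le n^{C''\!n} \exp(n^2 I(\mu)/2)$ is immediate. Via Lagrange interpolation one has
\[e^{nU^{\mu}(x)} |P_i(x)| \,\le\, \lambda_i \cdot \sum_{j=0}^n e^{n(U^{\mu}(x) - U^{\mu}(\alpha_j))} |L_j(x)|,\]
and the task is to bound this weighted Lebesgue function by $n^{C'}$ uniformly on $\Sigma$. Pure quantile points have exponentially large Lebesgue constant, so I expect to replace them by nodes that cluster more like Chebyshev nodes near the endpoints of each component of $\Sigma$; such a refinement changes $D$ only within the $n^{Cn}$ error already allowed. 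With this choice, the weighted Lebesgue bound follows from the approximation estimates \eqref{eq:potential_approx} applied to $\tilde{P}(z) = \prod_k (z - \alpha_k)$ and its derivatives at the nodes, together with Lemma \ref{lem:Remez} to handle the apparent singularity as $x \to \alpha_j$.
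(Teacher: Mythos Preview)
Your plan is the paper's proof: apply Proposition~\ref{prop:Mink} at the $n{+}1$ quantile points of $\mu$ with weights $w_j=e^{nU^\mu(\alpha_j)}$, estimate $D$ via \eqref{eq:energy_approx} together with a Riemann-sum bound for $\sum_j U^\mu(\alpha_j)$, and upgrade node control to $\|\cdot\|_n$ control by Lagrange interpolation. Your computation of $D$ is correct.

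The one misconception is in your last paragraph. You are worried that quantile points have exponentially large Lebesgue constant, but the quantity you must bound is the \emph{weighted} Lebesgue function, and for this the quantile points of $\mu$ are already the correct analogue of Chebyshev nodes. Concretely, set $\widetilde P=P_{n+1,\mu}$ and $L_j=\widetilde P/(z-\alpha_j)$. The upper bound in \eqref{eq:potential_approx} (passing from exponent $n{+}1$ to $n$ costs only a bounded factor, since $U^\mu$ is bounded on $\Sigma$) gives $\|\widetilde P\|_n\le n^{C}$, and Lemma~\ref{lem:Remez} then gives $\|L_j\|_n\le n^{C'}$. Dividing the lower bound in \eqref{eq:potential_approx} by $|x-\alpha_j|$ and sending $x\to\alpha_j$ gives $e^{nU^\mu(\alpha_j)}|L_j(\alpha_j)|\ge n^{-C'}$. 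Summing the $n{+}1$ Lagrange terms yields $\|P\|_n\le (n{+}1)\,n^{C''}\max_j w_j|P(\alpha_j)|$, which is exactly the upgrade you wanted, and it is obtained with the very tools you name at the end.

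The proposed move to Chebyshev-like clustering is therefore unnecessary, and your claim that it changes $D$ only by $n^{Cn}$ is not safe: redistributing the nodes toward the equilibrium measure replaces $\sum_j U^\mu(\alpha_j)\approx (n{+}1)I(\mu)$ by $(n{+}1)\int U^\mu\,d\mu_{\Sigma}$ and replaces the Vandermonde asymptotic $e^{-(n+1)^2 I(\mu)/2}$ by one governed by $I(\mu_{\Sigma})$. Both shifts are of order $e^{cn^2}$, not $n^{Cn}$. Stick with the quantile points.
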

\begin{proof}
For a given integer $n \ge 2$, take $P_{n + 1, \mu}$ to be the degree $n+1$ approximating polynomial to $\mu$ constructed in Definition \ref{defn:approx} and take $\alpha_1, \dots, \alpha_{n+1}$ to be the roots of $P_{n+1, \mu}$. For $k$ satisfying $1 \le k \le n+1$, take $P_k$ to be the polynomial $P_{n+1, \mu}(z)/(z - \alpha_k)$.

Proposition \ref{prop:three_pages} gives
\[\log \normff{P_{n+1, \mu}}{n+1} \ll \log n,\]
where the implicit constants here and throughout the proof just depend on $\Sigma$ and $\mu$. Lemma \ref{lem:Remez} gives
\[\log \normff{P_k}{n+1} - \log \normff{P_{n+1, \mu}}{n+1} \ll \log n,\]
so $\log \normff{P_k}{n+1} \ll \log n$ for all $k \le n$. We have $|\log w_{\mu}(x)| \ll 1$ for all $x \in \Sigma$ by Lemma  \ref{lem:pot_Hold}, so
\begin{equation}
\label{eq:Pk_normf}
\log \normf{P_k} \ll \log n.
\end{equation}

Given real numbers $b_1, \dots, b_{n+1}$ that are not all $0$, if we take $P  = \sum_{k = 1}^{n+1} b_kP_k$, we find
\[\normf{P} \le \sum_{k = 1}^{n+1} |b_k| \normf{P_k}.\]
Taking logarithms and applying \eqref{eq:Pk_normf} gives
\[\log \normf{P} - \max_{k \le n+1} \log |b_k| \ll \log n.\]
But we have $P(\alpha_k) = b_kP_k(\alpha_k)$ for $k \le n+1$, so this gives
\begin{equation}
\label{eq:normf_interp}
\log \normf{P} - \max_{k\le n+1}  \log \left|\frac{P(\alpha_k)}{P_k(\alpha_k)}\right| \ll \log n.
\end{equation}
Since the polynomials $P_1, \dots, P_{n+1}$ are linearly independent, this inequality holds for all nonzero real polynomials $P$ of degree at most $n$.

Applying Proposition \ref{prop:Mink} to $\alpha_1, \dots, \alpha_{n+1}$ with $v_k$ equal to $|P_k(\alpha_k)|^{-1}$ for $k \le n+1$ gives that there are linearly independent integer polynomials $Q_0, \dots, Q_n$ satisfying
\[\prod_{i=0}^n\left( \max_{k \le n+1} \left|\frac{Q_i(\alpha_k)}{P_k(\alpha_k)}\right| \right) \,\le\, \prod_{k=1}^{n+1} |P_k(\alpha_k)|^{-1} \cdot \prod_{1 \le i < j \le n+1} |\alpha_i - \alpha_j|.\]
The right hand side of this is $\prod_{1 \le i < j \le n+1} |\alpha_i - \alpha_j|^{-1}$, which satisfies
\[\log \bigg(\prod_{1 \le i < j \le n+1} |\alpha_i - \alpha_j|^{-1}\bigg) - \tfrac{1}{2} n^2 I(\mu) \ll n \log n\]
by \eqref{eq:energy_approx}. Combining these inequalities with \eqref{eq:normf_interp} gives
\[\sum_{i = 0}^n\log \normf{Q_i} - \tfrac{1}{2}n^2 I(\mu) \ll n \log n.\]
Exponentiating this establishes the proposition.
\end{proof}

In the context of Theorem \ref{thm:main}, the following consequence of Proposition \ref{prop:Hilbert} accounts for the fact that the discriminant of a squarefree integer polynomial is a nonzero integer. The assumption that $\mu$ is H\"{o}lder can be removed, as we will show in Proposition \ref{prop:general_energy}.
\begin{cor}
\label{cor:neg_energy}
Suppose $\mu$ is a H\"{o}lder probability measure on a compact subset $\Sigma$ of $\R$. Then, if $\int_{\Sigma} \log|Q| d\mu$ is nonnegative for every nonzero integer polynomial $Q$, the energy of $\mu$ satisfies
\[I(\mu) \le 0.\]
\end{cor}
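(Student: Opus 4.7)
The plan is to derive the claim by coupling Proposition \ref{prop:Hilbert} with the hypothesis on integer polynomials, producing an estimate that forces $I(\mu)$ to be nonpositive as $n \to \infty$. The only preliminary adjustment is that Proposition \ref{prop:Hilbert} is stated for compact finite unions of intervals, whereas here $\Sigma$ is an arbitrary compact subset of $\R$. Since $\mathrm{supp}(\mu)\subseteq \Sigma$ is bounded, I may enlarge $\Sigma$ to a closed interval $[a,b]\supseteq \mathrm{supp}(\mu)$ without affecting $I(\mu)$ or $U^{\mu}$; the $n$-norm with respect to $[a,b]$ is at least the one with respect to the original $\Sigma$, and the H\"older hypothesis on $\mu$ is preserved. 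So I may as well assume $\Sigma$ is a compact finite union of intervals.

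The next step is to apply Proposition \ref{prop:Hilbert}: for each $n\ge 2$ there are linearly independent integer polynomials $P_0,\dots,P_n$ of degree at most $n$ satisfying
\[\prod_{i=0}^n \norm{P_i}_n \,\le\, n^{Cn}\exp\!\left(\tfrac{1}{2}n^2 I(\mu)\right),\]
for some $C$ depending only on $\mu$ and $\Sigma$. In the opposite direction, for any nonzero integer polynomial $P$ the definition of the $n$-norm yields the pointwise inequality $\log\norm{P}_n \ge nU^{\mu}(x)+\log|P(x)|$ for every $x\in\Sigma$. Integrating this against $\mu$ and using the hypothesis $\int \log|P|\,d\mu\ge 0$ gives
\[\log\norm{P}_n \,\ge\, n\,I(\mu) + \int \log|P|\,d\mu \,\ge\, n\,I(\mu).\]
Since linear independence forces each $P_i$ to be a nonzero integer polynomial, I may apply this to $P=P_i$ and sum over $i=0,\dots,n$ to obtain $\sum_{i=0}^n \log\norm{P_i}_n \ge (n+1)n\,I(\mu)$.

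Combining the two bounds yields
\[(n+1)n\,I(\mu) \,\le\, Cn\log n + \tfrac{1}{2}n^2 I(\mu),\]
and rearranging gives $(\tfrac{1}{2}n^2+n)\,I(\mu) \le Cn\log n$. Dividing through by $n^2$ and sending $n\to\infty$ forces $I(\mu)\le 0$, which is what we wanted. There is no real obstacle here; the corollary is essentially a formal consequence of Proposition \ref{prop:Hilbert}, with the geometry-of-numbers input having already done all the heavy lifting in that proposition, and the integrality of the $P_i$ being exploited through the lower bound $\int \log|P_i|\,d\mu \ge 0$.
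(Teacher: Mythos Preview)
Your proof is correct and follows essentially the same approach as the paper: both combine Proposition~\ref{prop:Hilbert} with the lower bound $\log\norm{P}_n \ge nI(\mu) + \int\log|P|\,d\mu \ge nI(\mu)$ obtained by integrating the defining pointwise inequality against $\mu$. The only cosmetic difference is that the paper extracts a single polynomial $P_n$ with $\norm{P_n}_n \le n^C\exp(\tfrac12 nI(\mu))$ (the smallest of the $n+1$ produced by Proposition~\ref{prop:Hilbert}) and runs the inequality once, whereas you sum the inequality over all $n+1$ polynomials; both yield $I(\mu)\le O((\log n)/n)\to 0$. You are also more careful than the paper in noting that $\Sigma$ must first be enlarged to an interval so that Proposition~\ref{prop:Hilbert} applies.
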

\begin{proof}
For $n \ge 2$, Proposition \ref{prop:Hilbert} shows there is a nonzero integer polynomial $Q_n$ with $\normf{Q_n} \le n^C  \exp\left(\tfrac{1}{2} n I(\mu)\right)$. It follows that
\[0 \,\le\, \int_{\Sigma} \log|Q_n|  d\mu \,\le\, C \log n + \tfrac{1}{2} n I(\mu) - n\int_{\Sigma} U^{\mu}d\mu \,=\, C\log n - \tfrac{1}{2} n I(\mu).\]
So $C \log n \ge \tfrac{1}{2} n I(\mu)$ for all $n \ge 2$, and this implies that $I(\mu)$ is nonpositive.
\end{proof}

The final lemma we need combines Theorem \ref{thm:adjust} and Proposition \ref{prop:squeeze}.
\begin{lem}
\label{lem:second_squeeze}
Suppose $\mu$ is a H\"{o}lder probability measure supported on a compact finite union of intervals $\Sigma$. There is then $C > 0$ so we have the following:

Choose nonnegative integers $k, m, n$ with $k \le m \le n$ and $n \ge 2$. Choose linearly independent integer polynomials $Q_0, \dots, Q_m$ of degree at most $n$ satisfying
\begin{equation}
\label{eq:ordah}
\normf{Q_0} \le \normf{Q_1} \le \dots \le \normf{Q_m}.
\end{equation}
Take $G = \textup{gcd}(Q_0, \dots, Q_k)$. We assume that $\int_{\Sigma} \log|G| d\mu$ is nonnegative and that
\[ m =  \max_{0 \le i \le k} \deg Q_i/G.\]
Then
\begin{equation}
\label{eq:second_squeeze}
\prod_{i = 0}^m \normf{Q_i} \ge n^{-Cn} \exp\left(\left(mn - \tfrac{1}{2}m^2 \right)\cdot I(\mu)\right).
\end{equation}
\end{lem}
\begin{proof}
Choose $C_0 > 0$ so Lemma \ref{lem:Remez} and Theorem \ref{thm:adjust} hold for $(\mu, \Sigma)$ with $C = C_0$, and so $\Sigma$ lies in $[-C_0, C_0]$.

If $k$ is $0$, then $m =k$ and the Lemma is immediate from Proposition \ref{prop:squeeze}. So we may assume $k \ge 1$. 

Applying Lemma \ref{lem:squarefree_nocommon}, we choose integers $b_{0}, \dots, b_{k-1}$ in $[0, 2m]$  for which the integer polynomial
\[Q = \left(Q_k + b_{k-1}Q_{k-1} + \dots + b_{0} Q_0\right)/G.\]
is squarefree and has degree $m$. From \eqref{eq:ordah}, we then have
\begin{equation}
\label{eq:RGk}
\normf{QG} \le \left(1 + \sum_{i = 0}^{k-1} b_i\right)  \cdot \normf{Q_k} \le 3km \normf{Q_k}.
\end{equation}

Take $\alpha_{1}, \dots, \alpha_{m}$ to be the roots of $Q$. Choose a nonnegative integer $j < m$. Applying Theorem \ref{thm:adjust} to $Q$ and $P = \tfrac{1}{2}z^j$, we find that there are complex numbers $\beta_1, \dots, \beta_{m}$ satisfying
\begin{equation}
\label{eq:loc_beta}
\sum_{i \le m} |\beta_i| \le C_0 m \log 2m
\end{equation}
for which
\[\tfrac{1}{2}z^j - \sum_{i \le m} \beta_i Q(z)/(z - \alpha_i)\]
is an integer polynomial. We then define an integer polynomial
\[H_j(z) = \sum_{i \le m} 2\beta_i Q(z)/(z - \alpha_i).\]
This polynomial has degree at most $m - 1$. It satisfies
\begin{align}
\label{eq:HjGk}
\normf{H_jG} &\le \sum_{i \le m} 2|\beta_i| (2n)^{C_0} \cdot \normf{QG} \\
& \le  C_0(2n)^{C_0} \cdot 6km^2 \log 2m \cdot \normf{Q_k}, \nonumber
\end{align}
with the first inequality following from Lemma \ref{lem:Remez} and the second following from \eqref{eq:RGk} and \eqref{eq:loc_beta}.

Now unfixing $j$, we see that the polynomials $H_0, \dots, H_{m - 1}$ are linearly independent, as their images in $\FFF_2[z]$ are linearly independent.  In particular, since $Q_i$ has degree $m + \deg G$ for some $i \le k$, we find that there is a subset $S$ of $\{0, \dots, m- 1\}$ of cardinality $m - k$ so the polynomials
\[\{Q_0/G,\, \dots,\, Q_k/G\} \cup \{ H_j\,:\,\, j \in S\}\]
are a basis for the vector space of polynomials of degree at most $m$. Applying Proposition \ref{prop:squeeze} with $R = G$ gives
\[\prod_{i = 0}^k \normf{Q_i} \cdot \prod_{j \in S} \normf{H_jG} \ge \frac{ \exp\left(\left(mn - \tfrac{1}{2}m^2  + n - \tfrac{1}{2}m\right) \cdot  I(\mu)\right)}{(m+1)!}.\]
From \eqref{eq:HjGk} and \eqref{eq:ordah}, we also have
\[\prod_{j \in S} \normf{H_jG} \le  \left(C_0(2n)^{C_0} \cdot 6km^2 \log 2m\right)^{m-k}  \cdot \prod_{i = k+1}^{m}  \normf{Q_i}.\]
Combining the previous two inequalities gives the lemma.
\end{proof}

We can now prove Theorem \ref{thm:squarefree}.
\begin{proof}[Proof of Theorem \ref{thm:squarefree}]
If $\Sigma'$ contains $\Sigma$, we see that the $n$-norm of a polynomial with respect to $(\mu, \Sigma)$ is at most the $n$-norm with respect to $(\mu, \Sigma')$. So it suffices to prove the theorem in the case where $\Sigma$ is a closed interval.

Choose $C_0 > 0$ so Lemma \ref{lem:second_squeeze}, Proposition \ref{prop:Hilbert}, and Lemma \ref{lem:Remez} hold with $C = C_0$, and so $\Sigma$ lies in $[-C_0, C_0]$.

Now fix $n \ge 2$ as in the theorem statement. By Proposition \ref{prop:Hilbert}, there are linearly independent integer polynomials $Q_0, \dots, Q_n$ of degree at most $n$ satisfying
\begin{equation}
\label{eq:thm_Hilbert}
\prod_{i = 0}^n \normf{Q_i} \,\le\, n^{C_0n} \exp\left(\tfrac{1}{2} n^2 I(\mu)\right).
\end{equation}
Permuting if necessary, we assume that 
\begin{equation}
\label{eq:ordah2}
\normf{Q_0} \le \normf{Q_1} \le \dots \le \normf{Q_n}.
\end{equation}

For $0 \le k \le n$, take $G_k$ to be $\textup{gcd}(Q_0, \dots, Q_k)$, and take
\[m_k = - \deg G_k \,+ \max_{0 \le i \le k} \deg(Q_i).\]
From this definition, we always have $m_0 = 0$. In addition, the linear independence of $Q_0, \dots, Q_k$ implies that 
\[m_k \ge k\quad\text{for }\,0 \le k \le n.\]
We may apply Lemma \ref{lem:second_squeeze} to $Q_0, \dots, Q_{m_k}$ with $m = m_k$ for any integer $k$ in $[0, n]$ Dividing \eqref{eq:thm_Hilbert} by \eqref{eq:second_squeeze} then gives
\[\prod_{i = m_k+1}^n \normf{Q_i} \le n^{2C_0 n} \exp\left(\tfrac{1}{2} (n-m_k)^2 \cdot I(\mu)\right) \le n^{2C_0n},\]
with the second inequality following from Corollary \ref{cor:neg_energy}. From \eqref{eq:ordah2}, we thus have
\[\normf{Q_{m_k + 1}}^{n - m_k} \le n^{2C_0 n} \quad\text{for }\, 0 \le k < n\,\text{ if }\, m_k < n.\]
Now choose $k$ to be the maximal nonnegative integer so $n - m_k \ge n^{1/2}$. Since $m_0 = 0$ and $m_j \ge j$ for each $j$, this integer exists and lies in $[0, n)$. We then have
\begin{equation}
\label{eq:Qmk1}
\normf{Q_{m_k+1}} \le n^{2C_0 \sqrt{n}}.
\end{equation}
From Lemma \ref{lem:squarefree_nocommon}, we may find a squarefree integer polynomial $Q$ of degree
\[m_{m_k + 1}> n - n^{1/2}\]
and integers $b_0, \dots, b_{m_k}$  in $[0, 2n]$ so
\[QG_{m_k+1} = Q_{m_k + 1} + b_{m_k}Q_{m_k} + \dots + b_0 Q_0.\]
From \eqref{eq:Qmk1} and \eqref{eq:ordah2}, we then have
\begin{equation}
\label{eq:Rgmk1}
\normf{QG_{m_k+1}}\le 3n^{2 + 2C_0 \sqrt{n}}
\end{equation}
The polynomial $G_{m_k+1}$ has degree smaller than $n^{1/2}$. Iterating Lemma \ref{lem:Remez} then gives
\begin{equation}
\label{eq:raw_Remez}
\normf{Q} \le (2n)^{C_0 \sqrt{n}} \normf{Q G_{m_k+1}}.
\end{equation}
Finally, we may choose a subset $S$ of $\{0, \dots, n-1\}$ of cardinality $n - \deg Q$ so 
\[R_n(z) = Q(z) \cdot \prod_{\alpha \in S} (z - \alpha)\]
is a squarefree integer polynomial of degree $n$. Since $Q$ has degree at least $n - n^{1/2}$, and since $\Sigma$ lies in $[-C_0, C_0]$, we have
\begin{equation}
\label{eq:random_roots}
\normf{R_n} \le (C_0 + n)^{\sqrt{n}} \cdot \normf{Q}.
\end{equation}
Combining \eqref{eq:Rgmk1}, \eqref{eq:raw_Remez}, and \eqref{eq:random_roots} then gives
\[\normf{R_n} \le (C_0 + n)^{\sqrt{n} }\cdot  (2n)^{C_0 \sqrt{n}}  \cdot 3 n^{2 + 2C_0 \sqrt{n}}.\]
This is less than $n^{C \sqrt{n}}$ for sufficiently large $C > 0$ not depending on $n \ge 2$, and the theorem follows.
\end{proof}
\section{Finding real polynomials to adjust}
\label{sec:adjust}
Suppose we have $\Sigma$ and $\mu$  satisfying the conditions of Theorem \ref{thm:squarefree}. This theorem then produces a sequence $R_2, R_3, \dots$ of squarefree integer polynomials of increasing degree. If these polynomials had all their roots contained in $\Sigma$, and if they were known to be monic and irreducible, we could apply Lemma \ref{lem:RSphere} to finish the proof of Theorem \ref{thm:main} for $(\mu, \Sigma)$. But there is no reason to expect the polynomials to satisfy these extra conditions.

Another possibility is to instead start with the approximating polynomials $P_{2, \mu}, P_{3, \mu}, \dots$ constructed in Definition \ref{defn:approx}, which do have all their roots in $\Sigma$ but which are not generally irreducible integer polynomials. Applying Corollary \ref{cor:adjust} with the squarefree polynomial $Q = R_{n}$ lets us adjust $P_{n, \mu}$ to a monic integer polynomial $R$. Using the Eisenstein condition, we may force these adjusted polynomials to be irreducible; see the proof of Theorem \ref{thm:main} below for details.

The only issue with this approach is that the process of adjusting $P_{n, \mu}$ to $R$ may shift roots to lie off of $\Sigma$. A standard method of showing a real polynomial $P$ has a root in a given interval $(x, y)$ is to show that $P(x)$ and $P(y)$ have different signs. But this method cannot be used to show that the polynomial $R$ has all real roots. After all, while we can use \eqref{eq:potential_approx} and \eqref{eq:Holder_spaced} to show there is some $x$ between any two adjacent roots of $P_{n, \mu}$  such that
\[w^n_{\mu}(x)\cdot|P_{n, \mu}(x)| \ge n^{-C},\]
the best bound we have for $P_{n, \mu} - R$ at this $x$ is
\[w^n_{\mu}(x) \cdot \left|P_{n, \mu}(x) - R(x)\right| \le n^{C\sqrt{n}},\]
where $C>0$ depends just on $\Sigma$ and $\mu$. In particular, $R(x)$ has no reason to have the same sign as $P_{n, \mu}(x)$.

As in the above incomplete argument, our actual proof of Theorem \ref{thm:main} uses Theorem \ref{thm:squarefree} and Corollary \ref{cor:adjust} to adjust real polynomials to integer polynomials, and our method for keeping the roots of the resulting polynomials in $\Sigma$ is to look for sign changes. The key trick of our approach is that, rather than start with the polynomials $P_{n, \mu}$, we start with real polynomials whose high degree terms are already integers, as in Proposition \ref{prop:early_ints_better}. This makes the adjustment from the real polynomials to integer polynomials small enough that the adjustment does not push the roots off $\Sigma$.

\begin{prop}
\label{prop:early_ints_better}
Choose a compact finite union of intervals $\Sigma$, a H\"{o}lder probability measure $\mu$ on $\Sigma$, and a positive number $B$. Take $\kappa$ to be the capacity of $\Sigma$, as defined in Definition \ref{defn:capacity}; we assume $\kappa > 1$. Then there is a real number $C > 3$ depending on $\Sigma$, $\mu$, and $B$ so we have the following:

Choose a positive integer $n > C$, and take 
\[m =\left\lfloor B\sqrt{n} \log n\right\rfloor.\]
 Then there is a real monic polynomial $P_n$ of degree $n$ such that
\begin{enumerate}
\item $P_n$ is squarefree and has all roots in $\Sigma$,
\item We have 
\begin{equation}
\label{eq:early_ints_normf}
\normff{P_n}{n-m} \le n^C \kappa^{m},
\end{equation}
\item For any integer $k$ in the interval $[n-m, n-1]$, the degree $k$ coefficient of $P_n$ is an even integer, and 
\item Taking $\alpha_1< \alpha_2 < \dots < \alpha_n$ to be the roots of $P_n$, there are sequences of real numbers $x_1, \dots, x_n$ and $y_1, \dots, y_n$ such that
\[x_1 < \alpha_1 < y_1 < x_2 < \alpha_2 < y_2 < \dots < x_n < \alpha_n < y_n,\]
such that $[x_k, y_k]$ is a subset of $\Sigma$ for all $k \le n$, and such that
\begin{equation}
\label{eq:big_strong_interval}
\min\Big(w^{n-m}_{\mu}(x_k)\left|P_n(x_k) \right|,\,\,\, w^{n-m}_{\mu}(y_k)\left|P_n(y_k) \right| \Big) \ge n^{-C} \kappa^{m}\quad\text{for }\, k \le n.
\end{equation}
\end{enumerate}
\end{prop}
We will prove this proposition in Section \ref{ssec:early_ints}. Before we do that, we will show it implies our main theorem.

% P overload: P real? P real Q int aux R adjustee?

\subsection{Proof of Theorem \ref{thm:main}}
\label{ssec:main}
By Proposition \ref{prop:limit_Holder}, we may assume without loss of generality that $\mu$ is H\"{o}lder and that $\Sigma$ is a compact finite union of intervals of capacity $\kappa > 1$. Choose $C_0 > 0$ such that Theorem \ref{thm:squarefree} and Corollary \ref{cor:adjust} hold for $(\Sigma, \mu)$ with $C = C_0$. Take $B = 2C_0/\log \kappa$, and choose $C_1 > 3$ such that the condition of Proposition \ref{prop:early_ints_better} holds for this $B$ if $C = C_1$.

Choose an integer $n$ satisfying the inequalities
\begin{equation}
\label{eq:main_n_ass}
n > C_1, \quad 4n^{C_0 + C_0\sqrt{n}} < n^{-C_1}\kappa^{\lfloor B \sqrt{n} \log n\rfloor},\quad\text{and}\quad n - B \sqrt{n}\log n \ge 2.
\end{equation}
We note this is satisfied for all sufficiently large $n$. Take $m = \left\lfloor B\sqrt{n} \log n\right\rfloor$.

Take $P_n$ to be the polynomial of degree $n$ constructed in Proposition \ref{prop:early_ints_better} for $\mu$, $n$, $m$, and $B$. We write this polynomial in the form $z^n + a_{n-1}z^{n-1} + \dots + a_0$, and we take
\[P_{\text{low}}(z) = a_{n-m-1}z^{n-m-1} + a_{n-m-2}z^{n-m-2} + \dots + a_0.\]
Then $P_n - P_{\text{low}}$ is an integer polynomial, but $P_{\text{low}}$ is typically just a real polynomial.

Applying Theorem \ref{thm:squarefree}, there is a squarefree integer polynomial $Q$ of degree $n -m$ satisfying $\normff{Q}{n-m} \le n^{C_0 \sqrt{n}}$.  When we apply Corollary \ref{cor:adjust} to the real polynomial $\tfrac{1}{4}P_{\text{low}} +\tfrac{1}{2}$  with integer polynomial $Q$, we are left with an integer polynomial $R$ satisfying
\[\normff{\left(R - \tfrac{1}{4}P_{\text{low}} -\tfrac{1}{2}\right)}{n - m} \le n^{C_0 + C_0\sqrt{n}}.\]
Taking
\[R_n(z) = z^n + a_{n-1}z^{n-1} + \dots + a_{n-m}z^{n-m} + 4R(z) - 2,\]
we thus have
\begin{equation}
\label{eq:4nC0}
\normff{\left(R_n  - P_n\right)}{n - m} \le 4n^{C_0 + C_0\sqrt{n}}.
\end{equation}

Now, given a root $\alpha_k$ of $P_n$, take $[x_k, y_k]$ to be the subinterval constructed around $\alpha_k$ in Proposition  \ref{prop:early_ints_better} (4). Then \eqref{eq:big_strong_interval} holds with $C = C_1$. From \eqref{eq:main_n_ass} and \eqref{eq:4nC0}, we can conclude
\begin{equation}
\label{eq:final_triangle}
\left|P_n(x_k)\right| >\left|P_n(x_k) - R_n(x_k)\right| \quad\text{and}\quad \left|P_n(y_k)\right| >\left|P_n(y_k) - R_n(y_k)\right|.
\end{equation}
Since the interval $[x_k, y_k]$ contains exactly one root of $P_n$, $P_n(x_k)$ and $P_n(y_k)$ have opposite signs. From \eqref{eq:final_triangle}, $R_n(x_k)$ and $R_n(y_k)$ have opposite signs as well, so $[x_k, y_k]$ contains at least one root of $R_n$ for each $k \le n$. Since $R_n$ has degree exactly $n$, we find that each interval contains exactly one root and that $R_n$ has no roots outside the union of these intervals. So the roots of $R_n$ all lie in $\Sigma$.

The integer polynomial $R_n$ is irreducible by Eisenstein's criterion at $2$. Furthermore, from \eqref{eq:early_ints_normf} and \eqref{eq:4nC0}, we have
\[\normff{R_n}{n-m} \le 4n^{C_0 + C_0\sqrt{n}} + n^{C_1}\kappa^m.\]
Taking $C_2$ to be $\max_{x \in \Sigma} w_{\mu}(x)$, we may conclude
\[\normf{R_n}  \le (C_2\kappa)^{\lfloor B \sqrt{n}\log n\rfloor} \left(4n^{C_0 + C_0\sqrt{n}} + n^{C_1}\right).\]
So
\[\lim_{n \to \infty} \normf{R_n}^{1/n} = 1,\]
and Theorem \ref{thm:main} follows from an application of Lemma \ref{lem:RSphere}.
\qed

\subsection{Towards the proof of Proposition \ref{prop:early_ints_better}}
\label{ssec:early_ints}
For the rest of this section, we fix $\Sigma$ of capacity $\kappa > 1$ and $\mu$ as in the proposition statement, and we take $k_0$ to be the number of components of $\Sigma$. The proof of Proposition \ref{prop:early_ints_better} takes a similar approach to Robinson's proof \cite{Robi64, Serre18} that infinitely many algebraic integers have all conjugates inside $\Sigma$. The main strategy of both our proof and Robinson's proof is to take a Chebyshev polynomial for $\Sigma$ and nudge its coefficients to satisfy the conditions we want.

\begin{lem}[Chebyshev polynomials]
\label{lem:Chebyshev}
For $r \ge 0$, there is a unique real monic polynomial $T_r$ of degree $r$ for which there are $r+1$ points $x_0 < x_1 < \dots < x_r$ in $\Sigma$ such that
\[(-1)^{r - i}T_r(x_i) =  \normz{T_r} \quad\text{for all } \, i \le r.\]
This polynomial satisfies
\[ C^{-1} \cdot \kappa^r\le \normz{T_r} \le C \cdot \kappa^r,\]
where $C> 1$ depends just on $\Sigma$.
\end{lem}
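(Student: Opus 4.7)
The lemma combines the classical Chebyshev alternation theorem for compact subsets of $\R$ with the identification of the Chebyshev constant of $\Sigma$ and its capacity $\kappa$, strengthened to a two-sided bound uniform in $r$. For existence, note that since $\Sigma$ has positive capacity it is infinite, so $\norm{\cdot}_0$ restricts to a genuine norm on the finite-dimensional space of real polynomials of degree at most $r$. All norms on a finite-dimensional space are equivalent, so $P \mapsto \norm{P}_0$ is coercive in the coefficients; hence a minimizer $T_r$ of $\norm{\cdot}_0$ over monic degree-$r$ polynomials exists, and we set $\lambda = \norm{T_r}_0$.

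For equioscillation, suppose for contradiction that the extremal set $E = \{x \in \Sigma : |T_r(x)| = \lambda\}$ admits no increasing sequence of $r+1$ points at which $T_r$ takes alternating values $\pm\lambda$. Then one can select separation points $\tau_1 < \cdots < \tau_\ell$ with $\ell \le r-1$, all lying off $E$, so that the sign of $T_r$ on $E$ is constant on each slab $[\tau_{j-1}, \tau_j]\cap \Sigma$ and alternates from one slab to the next. The polynomial $Q(z) = \pm \prod_j (\tau_j - z)$ then has degree at most $r-1$ and the same sign as $T_r$ on $E$, and a standard compactness-plus-continuity argument (splitting $\Sigma$ into a neighborhood of $E$, where $T_r$ and $Q$ share signs and $|T_r|$ is close to $\lambda$, and its complement, where $|T_r|$ is bounded away from $\lambda$) gives $\norm{T_r - \epsilon Q}_0 < \lambda$ for all sufficiently small $\epsilon > 0$, contradicting minimality. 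Uniqueness is then immediate: any second minimizer $T_r'$ yields the monic minimizer $(T_r + T_r')/2$, whose equioscillation at $r+1$ points forces $T_r(x_i) = T_r'(x_i) = \pm\lambda$ at each of those $r+1$ points, so the degree-$(r-1)$ polynomial $T_r - T_r'$ must vanish.

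For the lower bound $\norm{T_r}_0 \ge \kappa^r$, let $\mu_\Sigma$ be the unweighted equilibrium measure of $\Sigma$. By Frostman's theorem $U^{\mu_\Sigma} = -\log\kappa$ quasi-everywhere on $\Sigma$, and since $U^{\mu_\Sigma}$ is harmonic on $\C\setminus\Sigma$ and tends to $-\infty$ at $\infty$, the maximum principle gives $U^{\mu_\Sigma} \le -\log\kappa$ on all of $\C$. Factoring $T_r(z) = \prod_j (z-\alpha_j)$ and integrating against $\mu_\Sigma$ then yields
\[\log\norm{T_r}_0 \,\ge\, \int \log|T_r|\, d\mu_\Sigma \,=\, \sum_j \bigl(-U^{\mu_\Sigma}(\alpha_j)\bigr) \,\ge\, r\log\kappa.\]

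The main obstacle is the upper bound $\norm{T_r}_0 \le C_0 \kappa^r$, which is a Widom-type theorem for Chebyshev polynomials on a compact finite union of intervals. Since $\C\setminus\Sigma$ is a regular planar domain, its Green's function $g(\cdot, \infty)$ with pole at infinity satisfies $g(z, \infty) = \log|z| - \log\kappa + O(|z|^{-1})$ at $\infty$; by constructing single-valued monic polynomials of degree $r$ that approximate $\kappa^r \exp(r(g + i\tilde g))$ on $\Sigma$ up to a bounded multiplicative factor (where $\tilde g$ is a locally defined harmonic conjugate, with period structure controlled by the number and placement of the components of $\Sigma$), one produces competitors to $T_r$ of sup-norm at most $C_0 \kappa^r$, with $C_0$ depending only on $\Sigma$. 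This is the only step where the finite-interval hypothesis is essential: for general compact subsets of $\R$ of positive capacity the ratio $\norm{T_r}_0/\kappa^r$ can blow up.
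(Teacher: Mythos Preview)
Your proposal is correct and aligns with the paper's approach: the paper's proof is simply two citations, to the alternation theorem in \cite{CSZ_Cheb17} and to Widom's asymptotics \cite{Widom69} (see also \cite{Totik09}), and your sketch fills in exactly those two ingredients. Your lower bound via Frostman and the equilibrium potential is the standard Szeg\H{o} argument and even gives the sharper $C_0 = 1$ on that side; your identification of the upper bound as the nontrivial Widom-type input, requiring the finite-interval hypothesis to control the periods of the harmonic conjugate of the Green function, is precisely where the paper defers to the literature.
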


\begin{proof}
This characterization of the Chebyshev polynomials for $\Sigma$ is known as the alternation theorem and can be found in \cite[Theorem 1.1]{CSZ_Cheb17}. The bounds on $\normz{T_r}$ follow from asymptotic work of Widom \cite[Theorem 11.5]{Widom69}; see also \cite{Totik09}.
\end{proof}

Because the intervals $[x_k, y_k]$ in Proposition \ref{prop:early_ints_better} (4) need to lie in $\Sigma$, it will be useful to work with polynomials without roots close to the boundary of $\Sigma$. One way to accomplish this is with the following definition.
\begin{defn}
\label{defn:prune}
Take $n$ to be a positive integer, and take $P$ to be a real monic squarefree polynomial. Take $S$ to be the set of roots of $P$. We assume there is a cardinality $n$ subset $\widetilde{S}$ of $S \cap \Sigma$ so that, for every connected component $I$ of $\Sigma$ that meets $S$, the intersection $I \cap \widetilde{S}$ contains neither the least nor greatest element of $I \cap S$. Choosing any $\widetilde{S}$ satisfying this condition, we then call the polynomial
\[\widetilde{P}(z) = \prod_{\alpha \in \widetilde{S}} (z - \alpha)\]
a \emph{degree $n$ pruned polynomial} of $P$.
\end{defn}

\begin{lem}
\label{lem:adjusted_Cheb}
There is $C > 0$ just depending on $\Sigma$ so, for every integer $r \ge 2$, there is a real monic degree $r$ polynomial $\widetilde{T}_r$ satisfying 
\[\normz{\widetilde{T}_r } \le r^C \cdot \kappa^r,\]
 an integer $k_1 \le  k_0$, and a sequence of real numbers $y_1< \dots < y_{r+k_1}$ lying in $\Sigma$ such that
\[ |\widetilde{T}_r(y_i)| \ge C^{-1} \cdot \kappa^r \quad\text{for } \, 1 \le i \le r + k_1,\]
such that $(y_1, y_{r+k_1})$ contains all the roots of $\widetilde{T}_r$, and such that $(y_i, y_{i+1})$ either does not lie entirely in $\Sigma$ and contains no root of $\widetilde{T}_r$, or lies entirely in $\Sigma$ and contains a unique root of $\widetilde{T}_r$ for each $i < r + k_1$. 
\end{lem}
\begin{proof}
From the alternation theorem, we see that $T_{r + 3k_0}$ has fewer than $k_0 $ roots outside $\Sigma$, so we may define $\widetilde{T}_r$ to be a degree $r$ pruned polynomial of $T_{r + 3k_0}$. Consider the real numbers $x_0 , \dots, x_{r + 3k_0}$ constructed in Lemma \ref{lem:Chebyshev}. We define an equivalence relation on these roots by placing $x_i, x_j$ in the same class if they lie in the same connected component of $\Sigma$ and there is no root of $\widetilde{T_r}$ between them. There are  at most $r + k_0$ such classes, and we take $y_1 < \dots < y_{r + k_1}$ to be a sequence of representatives for them. The intervals $(y_i, y_{i+1})$ then satisfy the final condition of the lemma.

From Lemma \ref{lem:Remez}, we have
\[\log \normz{\widetilde{T}_r} - \log \normz{T_{r + 3k_0}} \ll \log r,\]
where the implicit constants here and throughout the proof depend just on $\Sigma$. From Lemma \ref{lem:Chebyshev}, we thus have $\log \normz{\kappa^{-r} \widetilde{T}_r} \ll \log r$. This establishes the claimed bound for $\normz{\widetilde{T}_r}$. Meanwhile, for $i \le r + k_1$, we have
\[ \left|\widetilde{T}_r(y_i)\right| \gg \left|T_{r + 3k_0}(y_i)\right|\]
since $T_{r + 3k_0}/\widetilde{T}_r$ is a monic polynomial with roots contained in $\Sigma$. So Lemma \ref{lem:Chebyshev} gives
\[\left|\widetilde{T}_r(y_i)\right| \gg  \kappa^{r},\]
finishing the proof of the lemma.
\end{proof}
At this point, fix $C_1 > 0$  so Lemmas \ref{lem:Chebyshev} and \ref{lem:adjusted_Cheb} hold for $C_1 = C$, and take
\[D = \left\lceil \frac{4C_1^2 }{\kappa - 1}\right\rceil.\]

This integer is used in the following lemma, which follows the basic idea of \cite[Construction 7.3]{BCLPS21}. The same idea can also be seen in earlier work of Fekete and Szeg\H{o} \cite[Theorem D]{FeSz55}.

\begin{lem}
\label{lem:Bclps}
Take $P$ to be a monic real polynomial of degree $n$, and choose an integer $r \ge 2$.  Take $y_1 < \dots < y_{r + k_1}$ to be the sequence of points constructed from $\widetilde{T}_r$ in Lemma \ref{lem:adjusted_Cheb}. Then there is a monic real polynomial $T$ of degree $r$ so that the degree $i$ coefficient of
\[T^{D} P\]
is an even integer for all integers $i$ in $[(D -1)r + n, Dr + n - 1]$, and so that
\[T(y_i) \big/\widetilde{T}_r(y_i) \ge 1/2\]
for $1 \le i \le r+ k_1$. This polynomial also satisfies
\begin{equation}
\label{eq:T_normz}
\normz{T}  \le r^{C_1} \cdot \kappa^r + 2C_1 r \cdot \kappa^r.
\end{equation}
\end{lem}

\begin{proof}
Define a sequence of coefficients $\lambda_{r-1}, \lambda_{r-2}, \dots, \lambda_0$ as follows:
\begin{itemize}
\item Take $\lambda_{r-1} \ge 0$ minimal so
\[\left(\widetilde{T}_r + \lambda_{r-1} T_{r-1}\right)^{D} P\]
has an even coefficient at degree $Dr + n - 1$.
\item Take $\lambda_{r-2} \ge 0$ minimal so
\[\left(\widetilde{T}_r + \lambda_{r-1} T_{r-1} + \lambda_{r-2} T_{r-2}\right)^{D} P\]
has an even coefficient at degree $Dr + n-2$. Note that this does not affect the coefficient at degree $Dr + n-1$ of this product.
\item Repeat this process for $\lambda_{r-3}, \lambda_{r-4}, \dots, \lambda_0$.
\end{itemize}
We then take
\[T = \widetilde{T}_r + \lambda_{r-1} T_{r-1} + \dots + \lambda_1 T_1 + \lambda_0,\]
and we see that $T^{D}P$ has even coefficients in degrees lying in  $[(D -1)r + n, Dr + n - 1].$

The $\lambda_i$ are bounded by $2/D$ since $P$ is monic, so
\[\lambda_i \left|T_{r - i}(y_j)\right|  \le 2C_1 \cdot \kappa^{r-i}D^{-1}\quad\text{ for }\, 1 \le j \le r + k_1\,\,\text{ and }\,\,1 \le  i \le r.\]
by  Lemma \ref{lem:Chebyshev}. So
\[ \left|\sum_{i =1}^{r} \lambda_{r -i} T_{r-i}(y_j)\right| \le \kappa^r \sum_{i = 1}^r 2C_1 \kappa^{-i} D^{-1} \le \frac{2C_1\kappa^r}{D(\kappa - 1)} \le \tfrac{1}{2}C_1^{-1}\kappa^r,\]
with the last inequality following from the choice of $D$. So Lemma \ref{lem:adjusted_Cheb} gives that $T(y_j)/\widetilde{T}_r(y_j)$ is at least $1/2$ for $j$ in $[1, r + k_1]$.

Finally, $\normz{\widetilde{T}_r}$ is at most $r^{C_1} \cdot \kappa^r$ by Lemma \ref{lem:adjusted_Cheb}, and $\normz{\lambda_i T_i}$ is at most $2C_1 \kappa^r$ for $i \le r$ by Lemma \ref{lem:Chebyshev}, so \eqref{eq:T_normz} follows.
\end{proof}
We now introduce the central construction used in Proposition \ref{prop:early_ints_better}.
\begin{defn}
\label{defn:comp_Q}
Take $\Sigma$, $\kappa$, $C_1$, and $D$ as above. Choose a real monic polynomial $P$ of degree $n$ and an integer $r \ge 2$. We then define a \emph{complementary polynomial to $P$ of degree $Dr$} as follows.

To start, take $T$ to be the polynomial constructed in Lemma  \ref{lem:Bclps} from $P$. We then take
\begin{equation}
\label{eq:Q0_def}
Q_0(x) = \prod_{j = 0}^{D-1} \left(T(x)  - \frac{2j + 1}{4DC_1} \kappa^r\right).
\end{equation}
Taking $d = rD - r$, we see that $Q_0 - T^D$ has degree at most $d$. Since $T^DP$ had even coefficients in degrees lying in $[d + n + 1, Dr + n -1]$, it follows that $Q_0P$ also has even coefficients in these degrees.

We define a sequence $\epsilon_{d}, \dots, \epsilon_1$ of real numbers as follows:
\begin{itemize}
\item Take $\epsilon_{d} \ge 0$ minimal so
\[ \left(Q_0 + \epsilon_{d}T_{d}\right)\cdot P\]
 has an even coefficient at degree $d + n$.
\item Take $\epsilon_{d-1} \ge 0$ minimal so
\[\left(Q_0 + \epsilon_{d} T_{d} + \epsilon_{d - 1}T_{d-1}\right)\cdot P\]
 has an even coefficient at degree $d + n- 1$.
\item Repeat this process for $\epsilon_{d -2}, \dots, \epsilon_1$.
\end{itemize}
Take
\[C_2 = 8\cdot(4DC_1)^D\]
We then take
\[Q = Q_0 +\epsilon_{d} T_{d} + \epsilon_{d-1}T_{d-1} + \dots  + \epsilon_1 T_1+ N,\]
where $N$ is the minimal nonnegative real number for which this polynomial satisfies
\begin{equation}
\label{eq:pigeonhole_space}
|Q(\alpha)| \ge n^{-1}C_2^{-1}\kappa^{Dr} \quad\text{for every root } \alpha \text{ of } P.
\end{equation}
We define the complementary polynomial to $P$ of degree $Dr$ to be this polynomial $Q$. From the construction, it is clear that the degree $i$ coefficient of $PQ$ is an even integer for $i$ in $[n + 1, n + Dr - 1]$.
\end{defn}

\begin{lem}
\label{lem:comp_Q_roots}
Given $\Sigma$, $\kappa$, $C_1$, and $D$ as above, there is $C > 1$ so we have the following.

Choose integers $r \ge C$ and $n \ge 0$, and choose a real monic polynomial $P$ with degree $n$. Take $Q$ to be the degree $Dr$ complementary polynomial to $P$.

Then $Q$ is squarefree, and all its roots lie in $\Sigma$. Furthermore, given any root $\beta$ of $Q$, there is an interval $[x, y]$ contained in $\Sigma$ and containing $\beta$ so no other root of $Q$ lies in $[x, y]$ and so
\[|Q(x)|, |Q(y)| \ge C^{-1} \cdot \kappa^{Dr}.\]
\end{lem}
\begin{proof}
As part of the construction of $Q$, we used the degree $r$ polynomial $T$ constructed in Lemma \ref{lem:Bclps}. Take $y_1 < \dots < y_{r + k_1}$ to be the sequence of real numbers constructed with $\widetilde{T}_{r}$ in Lemma \ref{lem:adjusted_Cheb}.

Take $S$ to be the set of integers $i$ in $[1, r+k_1)$ for which $(y_i, y_{i+1})$ contains a root of $T$. By Lemmas \ref{lem:adjusted_Cheb} and \ref{lem:Bclps}, this set has cardinality $r$. For $i$ in $S$,  the root of $T$ in the interval $[y_i, y_{i+1}]$ is unique, so $T(y_i)$ and $T(y_{i+1})$ have opposite signs. From Lemmas \ref{lem:adjusted_Cheb} and \ref{lem:Bclps}, we have
\[\min\big(|T(y_i)|, \,|T(y_{i+1})|\big) \ge \tfrac{1}{2}C_1^{-1}\kappa^r.\]
For any integer $j$ in $[0, D]$, the intermediate value theorem lets us choose a point $w_{i,j}$ in $[y_i, y_{i+1}]$ for which
\[T(w_{i,j}) = \frac{j}{2DC_1}\kappa^r.\]
We take $v_{i,0}< \dots < v_{i,D}$ to be the ordered elements in $\{w_{i,0}, \dots, w_{i,D}\}$.

We now consider the polynomial $Q_0$ defined in \eqref{eq:Q0_def}.  Given integers $j_1, j_2$ in $[0, D]$ with $j_1 < j_2$, we see there is an integer $j$ in $[0, D-1]$ so
\[\frac{j_1}{2 DC_1} \kappa^r < \frac{2j + 1}{4DC_1}\kappa^r < \frac{j_2}{2DC_1}\kappa^r.\]
Again by the intermediate value theorem, we thus see that $Q_0$ has a root in every interval $(v_{i, j}, v_{i,  j+1})$ for $j$ in $[0, D-1)$ and $i$ in $S$. Since $Q_0$ has degree $Dr$, the root in this interval must be unique, so $Q_0(v_{i, j})$ and $Q_0(v_{i, j+1})$ have opposite signs. Our goal is to show that the same holds for $Q$.

Take $N$ and $\epsilon_1, \dots, \epsilon_d$ as in Definition \ref{defn:comp_Q}. We will start by bounding $N$. If we take
\[a =  2C_2^{-1}n^{-1}\kappa^{Dr},\]
we see that, for any root $\alpha$ of $P$, $|(Q - ka)(\alpha)|$ is less than $n^{-1}C_2^{-1}\kappa^{Dr}$ for at most one integer $k$. Since $N$ was chosen to be minimal, we also know that $|(Q-ka)(\alpha)|$ is less than this bound for some $\alpha$ for all $k$  satisfying $0 \le k \le N/a$. By the pigeonhole principle, we thus have
\begin{equation}
\label{eq:Nbound}
N \le na = 2C_2^{-1} \kappa^{Dr}.
\end{equation}

Next, the $\epsilon_i$ appearing in the construction are at most $2$. From Lemma \ref{lem:Chebyshev}, we have
\begin{equation}
\label{eq:eps_bound}
\left| \sum_{i = 1}^d\epsilon_i T_i(x)  \right| \le \sum_{i = 1}^d 2C_1 \cdot \kappa^i \le \frac{2C_1 \kappa^{d+1}}{\kappa - 1} = \frac{2C_1 \kappa^{-r +1}}{\kappa - 1} \cdot \kappa^{Dr}.
\end{equation}
for all $x \in \Sigma$. Finally, the definition of $v_{i, j}$ and \eqref{eq:Q0_def} give
\[|Q_0(v_{i, j}) |\ge (4DC_1)^{-D} \kappa^{Dr} = 8C_2^{-1}\kappa^{Dr}.\]
From \eqref{eq:Nbound} and \eqref{eq:eps_bound}, we have. 
\[|Q(v_{i, j}) - Q_0(v_{i, j})| \le 2C_2^{-1}\kappa^{Dr} + \frac{2C_1 \kappa^{-r +1}}{\kappa - 1} \cdot \kappa^{Dr}.\]
At this point, select $C > 1$ so that
\[6C_2^{-1} - \frac{2C_1 \kappa^{-r +1}}{\kappa - 1} \, > \, C^{-1} \quad\text{for all } r \ge C .\]
The previous inequalities then give
\[|Q_0(v_{i, j})| - |Q(v_{i, j}) - Q_0(v_{i, j})| >C^{-1} \cdot \kappa^{Dr} \quad\text{ if } r \ge C.\]
We may conclude that $Q(v_{i, j})$ shares the sign of  $Q_0(v_{i, j})$ and has magnitude at least $C^{-1} \cdot \kappa^{Dr}$ if $r \ge C$.

Now assuming $r\ge C$, we see that $Q(v_{i, j})$ and $Q(v_{i, {j+1}})$ have different signs for $j < D$ and $i \in S$ since $Q_0$ had this property. Applying this for all $i$ in $S$ and $0 \le j < D$, we see that $Q$ has a unique root in the interval $(v_{i, j}, v_{i, j+1})$ and that each root of $Q$ is in such an interval. Given a root $\beta$ of $Q$, we thus may take $x = v_{i, j}$ and $y = v_{i, j+1}$, where $(i, j)$ is chosen so $\beta$ lies in $(v_{i, j}, v_{i, j+1})$.  The result follows. 
\end{proof}

We collect the other properties we need of the complementary polynomials now.
\begin{lem}
\label{lem:comp_Q_roots2}
Given $\Sigma$, $\kappa$, $C_1$, and $D$ as above, there is $C > 1$ so we have the following.

Choose an integer $r \ge C$, and choose a real monic polynomial $P$. Take $Q$ to be the degree $Dr$ complementary polynomial to $P$.

Then 
\begin{equation}
\label{eq:QQprime}
\normz{Q} \le r^C \kappa^{Dr} \quad\text{and}\quad \normz{Q'} \le r^C \kappa^{Dr}.
\end{equation}
Furthermore, given a root $\beta$ of $Q$, we have
\[|Q(x)| \ge C^{-1}\kappa^{Dr} |x - \beta| \quad\text{ for all } x \in \left[\beta - r^{-C},\, \beta + r^{-C}\right].\]
\end{lem}
\begin{proof}
Take $Q_0$ and $T$ as in Definition \ref{defn:comp_Q}. Then $\log\normz{\kappa^{-r}T} \ll \log r$ by Lemma \ref{lem:Bclps}. Here and throughout this proof, the implicit constants depend just on $\Sigma$, $\mu$, and $C_1$; we recall that $D$ and $C_2$ just depend on these choices.

It then follows from \eqref{eq:Q0_def} that $\log \normz{\kappa^{-Dr} Q_0} \ll \log r$. From \eqref{eq:eps_bound} and \eqref{eq:Nbound}, we have $\log \normz{\kappa^{-Dr} (Q - Q_0)} \ll \log r$, so
\begin{equation}
\label{eq:normzQ}
\log \normz{\kappa^{-Dr}Q} \ll \log r.
\end{equation}
Given a root $\beta$ of $Q$, take $Q_{\beta}$ to be the polynomial $Q(z)/(z - \beta)$. Then
\[\normz{Q'} = \normz{\sum_{\beta} Q_{\beta}} \le \,\max_{\beta} Dr \cdot \normz{Q_{\beta}},\]
where the sum and maximum are over the roots of $Q$. Lemma \ref{lem:Remez} gives
\[\log \normz{Q_{\beta}} - \log\normz{Q} \ll \log r.\]
We thus have 
\[\log\normz{Q'} - \log \normz{Q} \ll \log r.\]
Together with \eqref{eq:normzQ}, this implies \eqref{eq:QQprime} for $C$ sufficiently large

Now choose a root $\beta$ of $Q$. Supposing $r$ is sufficiently large, choose $[x, y]$ satisfying the condition of Lemma \ref{lem:comp_Q_roots} for $\beta$. We claim that
\begin{equation}
\label{eq:headache}
\min\big(|Q_{\beta}(x)|, \,|Q_{\beta}(y)|\big) \le |Q_{\beta}(w)| \text{ for all } w \in (x, y).
\end{equation}

To start, take $\beta_1 < \dots < \beta_{Dr-1}$ to be the roots of $Q_{\beta}$. These are distinct elements of $\Sigma$ outside of $[x, y]$. By Rolle's theorem, we see that $Q_{\beta}'$ must be $0$ somewhere on each interval $(\beta_i, \beta_{i+1})$  for $i <D r-1$. Since $Q_{\beta}'$ has degree $Dr-2$, its root on $(\beta_i, \beta_{i+1})$ is unique, and it has no roots outside $(\beta_1, \beta_{Dr-1})$.

We now prove \eqref{eq:headache}. There are two cases. First, if $(x, y)$ contains a root of $Q'_{\beta}$, then this interval must lie in $[\beta_i, \beta_{i+1}]$ for some $i < Dr-1$. If \eqref{eq:headache} does not hold for some $w$, we find that $|Q_{\beta}|$ has a local maximum in both $(\beta_i, w)$ and $(w, \beta_{i+1})$. But $|Q_{\beta}|$ is a smooth, nonnegative function on $[\beta_i, \beta_{i+1}]$, and $Q_{\beta}'$ is $0$ only once in this interval, so this cannot happen.

Otherwise, $Q'_{\beta}$ has no root in $(x, y)$, so $Q_{\beta}$ is monotonic in this interval, and the claim follows since $Q_{\beta}(x)$ and $Q_{\beta}(y)$ have the same sign.

From Lemma \ref{lem:comp_Q_roots}, we have
\begin{equation}
\label{eq:Qxboundagain}
\min\big(|Q(x)|, |Q(y)| \big)\gg  \kappa^{Dr}.
\end{equation}
From the fundamental theorem of calculus and our bounds on $|Q'|$, this implies
\[\min\big(\log|x - \beta|, \,\log |y  - \beta|\big) \gg - \log r.\]
So $[\beta - r^{-C}, \,\beta + r^{-C}]$ is a subset of $[x, y]$ for $C$ sufficiently large.

We have $|x - \beta| \ll 1$ and  $|y - \beta| \ll 1$ since $\Sigma$ is compact, so \eqref{eq:Qxboundagain} also gives
\[\min\big(|Q_{\beta}(x)|, |Q_{\beta}(y)| \big)\gg \kappa^{Dr}.\]
From \eqref{eq:headache}, we thus have
\begin{equation}
\label{eq:headache_real}
|Q_{\beta}(w)| \gg \kappa^{Dr}\quad\text{for }\, w \in [x, y] \supseteq [\beta - r^{-C}, \,\beta + r^{-C}]
\end{equation} 
for $C$ sufficiently large. The lemma follows.
\end{proof}

\begin{lem}
\label{lem:P_ver1}
With $\Sigma$ and $\mu$ fixed as above, and given an integer $d$, there is  $C > 0$ so we have the following:

For every integer $n \ge 2$, there is a monic squarefree polynomial $\widetilde{P}_n$ so
\begin{enumerate}
\item For every root $\alpha$ of $\widetilde{P}_n$, the interval $[\alpha - n^{-C}, \alpha + n^{-C}]$ is contained in $\Sigma$ and contains no root of $\widetilde{P}_n$ besides $\alpha$.
\item We have
\[  w^{n+d}_{\mu}(x) \left|\widetilde{P}_n(x)\right| \ge n^{-C} \min_{\alpha}|x - \alpha| \quad\text{for all } x \in \Sigma,\]
where the minimum is taken over all roots of $\widetilde{P}_n$.
\item We have $\normff{\widetilde{P}_n}{n+d} \le n^C$.
\end{enumerate}
\end{lem}
\begin{proof}
Given $n \ge 2$, take $P = P_{n + 2k_0, \mu}$ to be the degree $n+2k_0$ approximating polynomial for $\mu$ as given in Definition \ref{defn:approx}.  By Lemma \ref{lem:pot_Hold}, $U^{\mu}$  is bounded, so
\begin{equation}
\label{eq:d2k0bnd}
\big|\log w_{\mu}^{d - 2k_0}(x)\big| \ll 1 \quad\text{for }\, x \in \Sigma.
\end{equation}
Here and throughout this proof, the implicit constant for $\ll$ depends just on $\Sigma$, $\mu$, and $d$.  We have 
\begin{equation}
\label{eq:xyll}
|x - y| \ll 1\quad\text{ for all }\, x, y \in \Sigma
\end{equation}
since $\Sigma$ is compact, so Proposition  \ref{prop:three_pages} gives $\log \normff{P}{n+2k_0} \ll \log n$. From \eqref{eq:d2k0bnd}, we thus have
\[ \log \normff{P}{n + d} \ll \log n.\]
Take $\widetilde{P}_n$ to be a degree $n$ pruned polynomial for $P$.  This polynomial is monic and squarefree. We claim this satisfies the conditions of the lemma.

First, from \eqref{eq:Holder_spaced}, we have $\log| \alpha - \alpha'| \gg - \log n$ for any two roots of $P$. The definition of a pruned polynomial then gives (1).

Next, Lemma \ref{lem:Remez} gives
\[\log\normff{\widetilde{P}_n}{n+d} - \log\normff{P}{n+d} \ll \log n,\]
so $\log \normff{\widetilde{P}_n}{n+d} \ll \log n$. This gives (3).

This leaves (2). Choose $x \in \Sigma$, and choose a root $\alpha$ of $P$ for which $|x - \alpha|$ is minimized. We assume $x \ne \alpha$. Proposition \ref{prop:three_pages} gives
\[\log\left| w_{\mu}^{n + d}(x) P(x)\right| - \log|x - \alpha| \gg - \log n.\]
Suppose first that $\alpha$ is a root of $\widetilde{P}_n$. Taking $Q(x)$ to be the polynomial $P(x)/\widetilde{P}_n(x)$, we have  $\log|Q(x)| \ll 1$ for $x \in \Sigma$ by \eqref{eq:xyll}. So
\[\log\left| w_{\mu}^{n + d}(x) \widetilde{P}_n(x)\right| - \log|x - \alpha| \gg - \log n.\]
Otherwise, take $Q(x)$ to be the polynomial $P(x)/\big((x - \alpha) \cdot \widetilde{P}_n(x)\big)$. We still have $\log|Q(x)| \ll 1$, implying $\log\left| w_{\mu}^{n + d}(x) \widetilde{P}_n(x)\right| \gg - \log n$, and applying \eqref{eq:xyll} gives
\[\log\left| w_{\mu}^{n + d}(x) \widetilde{P}_n(x)\right| - \log|x - \alpha'| \gg - \log n\]
for any root $\alpha'$ of $\widetilde{P}_n$. From \eqref{eq:d2k0bnd}, we find that (2) holds except potentially in the case when $x$ is a root of $P$.  We may conclude the part from the continuity of both sides of the inequality.
\end{proof}

\begin{proof}[Proof of Proposition \ref{prop:early_ints_better}]
Take $\mu$, $\Sigma$, and $B$ as in the proposition statement, and define $C_1$ and $D$ from $\mu$ and $\Sigma$ as above. Choose $C_0 > 1$ such that Lemmas \ref{lem:comp_Q_roots} and \ref{lem:comp_Q_roots2} hold for $\Sigma$  with $C = C_0$, such that Lemma \ref{lem:Remez}  holds for $(\Sigma, \mu)$ with $C = C_0$, and such that Lemma \ref{lem:P_ver1} holds for $ (\Sigma, \mu)$ and all integers  $d$ in $[1, D]$ with $C = C_0$. We also assume that $\Sigma$ is contained in $[-C_0, C_0]$.

Given a sufficiently large integer $n$, we construct $P_n$ as follows. First, we take
\[m = \lfloor B \sqrt{n}\log n\rfloor\quad\text{and}\quad r = \left\lceil \frac{m+1}{D}\right\rceil.\]
We will assume that $n$ is large enough that $n - Dr \ge 2$ and $r \ge 2$. We then take $\widetilde{P}_{n - Dr}$ to be the polynomial constructed in Lemma \ref{lem:P_ver1} for $\Sigma$ and $\mu$  of degree $n - Dr$, and we take $Q_{Dr}$ to be a degree $Dr$ complementary polynomial to $\widetilde{P}_{n-Dr}$. We then take
\[P_n = \widetilde{P}_{n - Dr}Q_{Dr}\]
By the definition of a complementary polynomial,  this has even coefficients for all degrees in $[n-Dr + 1, n-1]$. Since $Dr \ge m +1$, we see that this polynomial obeys condition (3) of the proposition.

Both $\widetilde{P}_{n-Dr}$ and $Q_{Dr}$ have all roots in $\Sigma$, and they share no roots by the assumption \eqref{eq:pigeonhole_space} made as part of the definition of a complementary polynomial. So these polynomials obey condition (1) of the proposition.

For condition (2), we have
\[\normz{Q_{Dr}} \le r^{C_0} \kappa^{Dr} \quad\text{and} \quad \normff{\widetilde{P}_{n - Dr}}{n - m} \le (n-Dr)^{C_0}\]
from Lemmas \ref{lem:comp_Q_roots2} and \ref{lem:P_ver1}, with the latter being applied with $d = Dr - m$. Taking the product, we have
\[\normff{P_n}{n - m} \le n^{2C_0}\kappa^{Dr} \le n^{2C_0} \kappa^D \kappa^m.\]
So condition (2) holds for sufficiently large $C$.

So we now just need to show that condition (4) holds for a good choice of $C$. Define the real number $C_2$ as in Definition \ref{defn:comp_Q}, and take
\[\delta = \min\left(\tfrac{1}{3}n^{-C_0}, \tfrac{1}{3}n^{-C_0 - 1}C_2^{-1}\right).\]
We claim that any two distinct roots $\alpha_1, \alpha_2$ of $P_n$ satisfy $|\alpha_1 - \alpha_2| \ge 3\delta$. This is clear from Lemmas \ref{lem:comp_Q_roots2} and \ref{lem:P_ver1} unless one of the roots is of $\widetilde{P}_{n-Dr}$ and the other is of $Q_{Dr}$, so we focus on this case. Take $\alpha$ to be a root of $\widetilde{P}_{n - Dr}$. Lemma \ref{lem:comp_Q_roots2} and \eqref{eq:pigeonhole_space} then give
\[\normz{Q'_{Dr}} \le n^{C_0} \kappa^{Dr}\quad\text{and}\quad |Q_{Dr}(\alpha)| \ge n^{-1}C_2^{-1}\kappa^{Dr}.\]
As a result, we have
\[|Q_{Dr}(x)| \ge n^{C_0}\kappa^{Dr}\cdot \left(n^{-C_0 - 1}C_2^{-1} - |x - \alpha|\right) \ge n^{C_0}\kappa^{Dr}\cdot \left(3 \delta - |x - \alpha|\right).\]
for all $x \in \Sigma$. So $\alpha$ must have distance at least $3\delta$ from any root of $Q_{Dr}$.

Now choose a root $\alpha$ of $P_n$, and take $x$ to be either $\alpha + \delta$ or $\alpha - \delta$. From this last formula, we have
\[|Q_{Dr}(x)| \ge n^{C_0} \kappa^{Dr} \cdot2\delta\]
if $\alpha$ is a root of $\widetilde{P}_{n-Dr}$. Otherwise, $\alpha$ is a root of $Q_{Dr}$, and Lemma \ref{lem:comp_Q_roots2} implies
\[|Q_{Dr}(x)| \ge n^{-C_0}\kappa^{Dr}\cdot \delta.\]
In either case, Lemma \ref{lem:P_ver1} gives
\[w^{n - m}_{\mu}(x) \cdot \left|\widetilde{P}_{n - Dr}(x)\right| \ge n^{-C_0} \delta.\]
So, taken together, we have
\[w^{n - m}_{\mu}(x)  \left|P_n(x)\right| \ge n^{-2C_0} \kappa^{Dr} \delta^2.\]
This is at least $n^{-C}\kappa^{Dr} \ge n^{-C} \kappa^m$ so long as
\[ n^{-C} < n^{-2C_0} \min\left(\tfrac{1}{3}n^{-C_0}, \tfrac{1}{3}n^{-C_0 - 1}C_2^{-1}\right)^2 \quad\text{for all } n > C,\]
which does hold for sufficiently large $C$.  

Suppose $C$ has been chosen large enough that this holds, and suppose that $n > C$. Then, taking  $\alpha_1 < \dots < \alpha_n$ to be the roots of $P_n$, we may take $x_i = \alpha_i - \delta$ and $y_i = \alpha_i + \delta$ for each $i \le n$ to satisfy condition (4) of the proposition. The proposition follows.
\end{proof}

\section{Limits of measures and Serre's example}
\label{sec:limits}
The first goal of this section is to prove Proposition \ref{prop:limit_Holder}, which reduced the proof of  Theorem \ref{thm:main} to its proof for H\"{o}lder measures on a compact finite union of intervals. As part of this, the following measure will be important.
\begin{notat}
\label{notat:balbump}
Given real numbers $b> a> 0$, we define $\nu_{[a, b]}$ to be the measure supported on $[a, b]$ given by
\[d\nu_{[a, b]}(t) = \frac{dt\sqrt{ab}}{\pi t \sqrt{(b - t)(t - a)}}.\]
This measure is $1/2$-H\"{o}lder.
\end{notat}

\begin{lem}
\label{lem:balay}
Given any $b > a > 0$, we have the following:
\begin{enumerate}
\item The measure $\nu_{[a, b]}$ is a probability measure. 
\item For all $z \in \C$, we have
\begin{equation}
\label{eq:first_balay}
U^{\nu_{[a, b]}}(z) + \log|z| - \log\left(\frac{a + 2\sqrt{ab} + b}{b - a}\right) = \begin{cases} 0 &\text{ if } z \in [a, b] \\ \le 0 &\text{ otherwise.}\end{cases}
\end{equation}
\item We have
\[U^{\nu_{[a, b]}}(0) = \log\left(\frac{a + 2\sqrt{ab} + b}{4ab}\right).\]
\end{enumerate}
\end{lem}
\begin{proof}
Take $\nu = \nu_{[a, b]}$. By \cite[(II.4.47)]{SaTo97}, $\nu$ is the balayage of the probability measure $\delta_0$ supported on $\{0\}$ to the interval $[a, b]$. As such, it is automatically a probability measure, giving (1).

For the remaining parts, we follow the argument of \cite[Appendix B]{AgPe08}. Take $\mu$ to be the measure supported on $[b^{-1}, a^{-1}]$ given by 
\[d\mu(t) = \frac{dt}{\pi \sqrt{(a^{-1} - t)(t - b^{-1})}}.\]
As shown in \cite[Example I.3.5]{SaTo97}, this is the unweighted equilibrium measure on this interval, which has capacity $\tfrac{1}{4}(a^{-1} - b^{-1})$. In particular, it is a probability measure, so $\int d\mu = 1$, and we have
\begin{equation}
\label{eq:equibblib}
U^{\mu}(z) + \log\left(\frac{b - a}{4ab}\right) = \begin{cases} 0 &\text{ if } z \in [b^{-1}, a^{-1}] \\ \le  0 &\text{ otherwise}\end{cases}
\end{equation}
except potentially on a capacity $0$ subset of $[b^{-1}, a^{-1}]$ by \cite[(I.1.4) and (I.1.9)]{SaTo97}. The left hand side of this relation is a continuous function on $[b^{-1}, a^{-1}]$, so the equality must hold on $[b^{-1}, a^{-1}]$. From \cite[(10) in Appendix B]{AgPe08}, we also have
\begin{equation}
\label{eq:Serre10}
U^{\mu}(0) = -\log\left(\frac{a+ b + 2\sqrt{ab}}{4ab}\right).
\end{equation}

Given nonzero complex $z$, the $u$-subsitution  $u = t^{-1}$ gives
\[U^{\nu}(z) = \int_{a}^{b} - \log|z - t|  d\nu(t) =  \int_{b^{-1}}^{a^{-1}} -\log|z - u^{-1}|d\mu(u).\]
Since $-\log|z - u^{-1}| = -\log|z| + \log|u| - \log|z^{-1} - u|$ for  $u$ nonzero, we thus have
\begin{align}
\label{eq:nu_Serre}
U^{\nu}(z) \,&=\, - \log|z| \cdot \int  d\mu(u)  + \int \log|u| d\mu(u) - \int \log|z^{-1} - u| d\mu(u)\\
& =  \,- \log|z| - U^{\mu}(0) + U^{\mu}(z^{-1}).\nonumber
\end{align}
If $z$ lies in $[a, b]$, then $z^{-1}$ lies in $[b^{-1}, a^{-1}]$. From \eqref{eq:equibblib} and \eqref{eq:Serre10}, we find that \eqref{eq:first_balay} holds unless $z = 0$. But \eqref{eq:first_balay} takes the form $-\infty \le 0$ at $z = 0$, so (2) follows.

The function $U^{\nu}$ is continuous on $\R$ by  Lemma \ref{lem:pot_Hold}, and we have
\[\lim_{x \to 0} U^{\mu}(x^{-1}) - \log|x|  = 0\]
since $\mu$ is a probability measure with compact support. So $U^{\nu}(0) = -U^{\mu}(0)$ by \eqref{eq:nu_Serre}, giving (3).
\end{proof}

\begin{notat}
For $\epsilon$ in $(0, 1/2)$, take $\nu_{\epsilon}$ to be $\nu_{[\epsilon^2, \epsilon]}$. Given any other Borel measure $\mu$ with support contained in a compact subset $\Sigma$ of $\R$, we can consider the convolution measure $\mu * \nu_{\epsilon}$, which is defined by
\[\mu * \nu_{\epsilon}(Y) =  \int_{\R} \nu_{\epsilon}(Y - x) d\mu(x)\]
for any Borel set $Y$ in $\R$. From this formula, we see that
\[\mu * \nu_{\epsilon}\big([x_0, x_1]\big) \le \mu(\Sigma) \cdot \max_{t \in \R} \nu_{\epsilon}\big([x_0 + t, x_1 + t]\big)\]
for any real numbers $x_0 \le x_1$. Since $\nu_{\epsilon}$ is $1/2$-H\"{o}lder, it follows that this measure is also $1/2$-H\"{o}lder. Its support lies in
\[\big\{x \in \R\,:\,\, [x - \epsilon, x - \epsilon^2] \cap \Sigma \ne \emptyset\big\}.\]
For any $z \in \C$, we have
\[U^{\mu * \nu_{\epsilon}}(z) = \int \int -\log|z - x -t| d\mu(x)d\nu_{\epsilon}(t) = \int U^{\nu_{\epsilon}}(z - x) d\mu(x).\]
By Lemma \ref{lem:balay}, this is at most
\[ \log\left(\frac{\epsilon^2 + 2\epsilon^{3/2} + \epsilon}{\epsilon - \epsilon^2}\right)  - \int \log|z - x| d\mu(x),\]
and we can conclude that
\begin{equation}
\label{eq:good_imitation}
U^{\mu * \nu_{\epsilon}}(z) - U^{\mu}(z) \le   \log\left(\frac{\epsilon^2 + 2\epsilon^{3/2} + \epsilon}{\epsilon - \epsilon^2}\right)    \le C\epsilon^{1/2},
\end{equation}
where $C > 0$ does not depend on $z$, $\epsilon$, or $\mu$.

\end{notat}

\begin{lem}
\label{lem:weaks_smear}
Suppose there is $\epsilon_0 < 1/2$ so $\mu *\nu_{\epsilon}$ is supported in $\Sigma$ for all $\epsilon < \epsilon_0$.  Then $\mu * \nu_{\epsilon}$ converges to $\mu$ in the  \weaks topology as $\epsilon$ tends to $0$. 
\end{lem}
\begin{proof}

We first note that, given any continuous function $g : [0, \epsilon_0]\to \R$, we have
\begin{equation}
\label{eq:delta_weaks}
\lim_{\epsilon \to 0^+} \int_{\epsilon^2}^{\epsilon} gd\nu_{\epsilon} = g(0).
\end{equation}
since $\max_{x \in [\epsilon^2, \epsilon]} g(x)$ and $\min_{x \in [\epsilon^2, \epsilon]} g(x)$ tend to $g(0)$ as $\epsilon$ tends to $0$. 

Choose $C > 2$ so $\Sigma$ is contained in $[-C + 1, C - 1]$. Given a continuous function $f: \Sigma \to \R$, the Tietze extension theorem \cite[Theorem 15.8]{Will04} guarantees that there is a continuous function $f_1: [-C, C] \to \R$ restricting to $f$. This function is uniformly continuous since $[-C, C]$ is compact, so the function $g: [0, 1) \to \R$ given by
\[g(y) = \int_{-C + 1}^{C-1} f_1(x + y) d\mu(x)\]
is continuous. So \eqref{eq:delta_weaks} implies
\begin{align*}
\lim_{\epsilon \to 0^+} \int_{\Sigma} f d(\mu * \nu_{\epsilon}) &= \lim_{\epsilon \to 0^+} \int \int f(x + y) d\mu(x)d\nu_{\epsilon}(y) \\
&=\lim_{\epsilon \to 0^+} \int g d\nu_{\epsilon} = g(0) = \int f d\mu,
\end{align*}
establishing \weaks convergence.
\end{proof}

The following construction will also be useful.
\begin{defn}
\label{defn:sweet}
Choose a compact subset $\Sigma$ of $\R$ of capacity $\kappa > 1$, and choose a compact subset $\Sigma'$ of $\R$ containing $\Sigma$. Choose measures $\mu$ and $\nu$ with support contained in $\Sigma'$. We assume $\mu$ is a probability measure. Taking
\[ \gamma  =1 - \nu(\Sigma'),\]
we assume $\gamma$ lies in $[0, 1]$.
Take $\mu_{\Sigma}$ to be the unweighted equilibrium measure for $\Sigma$, and consider 
\begin{equation}
\label{eq:def_B}
B = \sup_{\substack{U^{\mu}(z) < \infty}} \left(U^{\nu + \gamma\cdot \mu_{\Sigma}}(z) - U^{\mu }(z)\right).
\end{equation}
We assume $B$ exists and is finite. Assuming this, it must be nonnegative, as $U^{\nu + \gamma\cdot \mu_{\Sigma}}(z) + \log|z|$ and  $U^{\mu }(z) + \log |z|$ both tend to $0$ as $|z|$ tends to $\infty$.

We then define the \emph{sweetened measure} of $\nu$ with respect to $\Sigma$ and $\mu$ to be the probability measure
\[\text{sw}(\nu) = (\beta + \gamma - \beta\gamma)\cdot  \mu_{\Sigma} + (1 - \beta)\cdot \nu,\]
where $\beta$ in $[0, 1)$ is selected so
\[\frac{\beta}{1 -\beta} = \frac{B}{\log \kappa}.\]
The \emph{measure of sweetener} in this measure is defined to be $\beta + \gamma - \beta \gamma$.

By \cite[(I.1.4)]{SaTo97} and the definition of $B$ and $\beta$, we have 
\begin{align}
\label{eq:sw_def}
U^{\textup{sw}(\nu)}(z) &\,=\, (\beta +\gamma - \beta\gamma) U^{\mu_{\Sigma}}(z) + (1 - \beta) U^{\nu}(z) \\
\nonumber
&\le\,-\beta \log \kappa + \gamma(1 - \beta)U^{\mu_{\Sigma}}(z) +  (1 -\beta) U^{\nu}(z)\\
\nonumber
& = \, - (1 - \beta)B + (1 - \beta)U^{\nu + \gamma \cdot \mu_{\Sigma}}(z)\\
\nonumber
 &\le\, (1 - \beta) U^{\mu}(z)
\end{align}
for all $z \in \C$.

Choose a nonconstant integer polynomial $Q$, and take its leading term to be $a$.
Given any probability measure $\mu'$ supported on $\Sigma'$, we have
\begin{align}
\label{eq:logQ_pot}
\int_{\Sigma'} \log|Q| d\mu' &=\,\log|a| +  \deg Q \int \int \log|z - x| d\mu'(x) d\mu_Q(z) \\
&= \,\log|a| - \deg Q \int  U^{\mu'}(z) d\mu_Q(z),\nonumber
\end{align}
where $\mu_Q$ is the associated counting measure for $Q$. From \eqref{eq:sw_def}, we have
\begin{align*}
\log|a| - \deg Q \int U^{\textup{sw}(\nu)}(z)d\mu_Q(z) &\ge\, \log|a| - (1 - \beta)\deg Q \int U^{\mu}(z)d\mu_Q(z) \\
&\ge \,(1 - \beta) \left(\log|a| - \deg Q \int U^{\mu}(z)d\mu_Q(z) \right).
\end{align*}
So, if $\int_{\Sigma'} \log|Q| d\mu \ge 0$ for a given nonconstant integer polynomial $Q$, it is also true that
\begin{equation}
\label{eq:sweetened}
\int_{\Sigma'} \log|Q| d\text{sw}(\nu)\ge 0.
\end{equation}

\end{defn}

\begin{rmk}
\label{rmk:unweighted_holder}
If $\Sigma$ is the compact finite union of intervals $[x_1, y_1] \cup \dots \cup [x_k, y_k]$ with 
\[ x_1 < y_1 < \dots < x_k < y_k,\]
then
\[d\mu_{\Sigma}(t) = |P(t)| dt\cdot \prod_{i \le k} |t -x_i|^{-1/2} |t - y_i|^{-1/2}\quad\text{for }\,t \in \Sigma\]
for some polynomial $P$, as follows from \cite[Lemma 2.2]{Pehe90}. In particular, $\mu_{\Sigma}$ is $1/2$-H\"{o}lder in this case, so $\textup{sw}(\nu)$ is H\"{o}lder if $\nu$ is H\"{o}lder.
\end{rmk}

To prove Proposition \ref{prop:limit_Holder}, we will need to generalize Corollary \ref{cor:neg_energy} to non-H\"{o}lder measures.
\begin{prop}
\label{prop:general_energy}
Choose a Borel probability measure $\mu$ whose support is contained in a compact subset of $\R$. Suppose $\int  \log|Q| d\mu$ is nonnegative for every nonzero integer polynomial $Q$. Then 
\[I(\mu)\le 0.\]
\end{prop}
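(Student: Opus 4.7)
The plan is to reduce to the H\"older case (Corollary \ref{cor:neg_energy}) by approximating the arbitrary Borel probability measure $\mu$ by H\"older probability measures that still satisfy the integer polynomial hypothesis, and then passing to the limit via lower semicontinuity of the energy functional. The two ingredients are convolution with the bump measure $\nu_\epsilon$ from Notation \ref{notat:balbump} and the sweetening construction already defined in this section.

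Fix a closed interval $\Sigma'$ containing $\Sigma$ and of capacity strictly greater than $1$, chosen large enough to contain the support of $\mu * \nu_\epsilon$ for all small $\epsilon > 0$. Set $\mu_\epsilon = \mu * \nu_\epsilon$; this is a H\"older probability measure supported in $\Sigma'$ whose potential satisfies $U^{\mu_\epsilon} \le U^\mu + C\epsilon^{1/2}$ by \eqref{eq:good_imitation}. Apply the sweetening construction with reference probability measure $\mu$ and with $\nu = \mu_\epsilon$. Since $\gamma = 1 - \mu_\epsilon(\Sigma') = 0$ and $B \le C\epsilon^{1/2}$ is finite, this produces a H\"older probability measure
\[ \sigma_\epsilon \,=\, \beta_\epsilon \cdot \mu_{\Sigma'} + (1 - \beta_\epsilon)\cdot \mu_\epsilon, \]
with $\beta_\epsilon \to 0$ as $\epsilon \to 0^+$, since $-I(\mu_{\Sigma'}) = \log c_{\Sigma'}$ is a fixed positive constant.

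The defining property $U^{\sigma_\epsilon} \le (1 - \beta_\epsilon) U^\mu$ combined with the hypothesis on $\mu$ yields, via \eqref{eq:sweetened}, that $\int \log|Q|\, d\sigma_\epsilon \ge 0$ for every nonzero integer polynomial $Q$. Corollary \ref{cor:neg_energy} therefore gives $I(\sigma_\epsilon) \le 0$. Standard properties of convolution imply $\mu_\epsilon \to \mu$ in the \weaks topology as $\epsilon \to 0$, and combined with $\beta_\epsilon \to 0$ this gives $\sigma_\epsilon \to \mu$ weakly. Since all measures involved are supported in the common compact set $\Sigma'$, and since $-\log|z-w|$ is lower semicontinuous and bounded below on $\Sigma' \times \Sigma'$, the energy $I$ is lower semicontinuous in the \weaks topology on probability measures supported in $\Sigma'$, whence
\[ I(\mu) \,\le\, \liminf_{\epsilon \to 0^+} I(\sigma_\epsilon) \,\le\, 0. \]

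I expect the hypothesis transfer step to be the main subtlety. The raw convolution $\mu_\epsilon$ by itself need not satisfy $\int \log|Q|\, d\mu_\epsilon \ge 0$ for all integer $Q$, since the potential perturbation of size $\epsilon^{1/2}$ accumulates to a negative contribution of order $(\deg Q)\epsilon^{1/2}$ at the zeros of $Q$, and this can drive the integral below zero once $\deg Q$ is large enough relative to $\epsilon^{-1/2}$. The sweetening is designed precisely to compensate: injecting a small controlled amount of the equilibrium mass of $\Sigma'$ supplies a uniform positive offset, whose availability requires $c_{\Sigma'} > 1$; this is the reason we must first enlarge $\Sigma$ to a capacity-$>1$ set.
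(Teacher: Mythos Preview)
Your proof is correct and follows essentially the same route as the paper: convolve with $\nu_\epsilon$, sweeten against a large interval of capacity $>1$ to restore the integer-polynomial hypothesis, apply Corollary \ref{cor:neg_energy} to the resulting H\"older measures, and conclude by lower semicontinuity of the energy (the paper cites \cite[Theorem I.6.8]{SaTo97} for this last step). Your explicit identification of $\gamma=0$, the bound $B\le C\epsilon^{1/2}$, and the remark on why sweetening is required are all accurate elaborations of what the paper does more tersely.
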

\begin{proof}
Choose $C_0 > 3$ so the support of $\mu$ is contained in $[-C_0+ 1, C_0 - 1]$. Take $\Sigma = [-C_0+1, C_0-1]$ and $\Sigma' = [-C_0, C_0]$. The interval $\Sigma$ has length greater than $4$, so it has capacity greater than $1$.

For a given $\epsilon$ in $(0, 1/2)$, consider the sweetened measure $\text{sw}(\mu * \nu_{\epsilon})$ defined with respect to $\mu$ and $\Sigma$. The measure $\mu * \nu_{\epsilon}$ is H\"{o}lder, so the sweetened measure is also H\"{o}lder. Thus, \eqref{eq:sweetened} implies Corollary \ref{cor:neg_energy} holds for $\text{sw}(\mu * \nu_{\epsilon})$, so $I(\text{sw}(\mu * \nu_{\epsilon})) \le 0$ for all $\epsilon$ in $(0, 1/2)$.

At the same time, the measure of sweetener in $\text{sw}(\mu * \nu_{\epsilon})$ is bounded by $C \epsilon^{1/2}$ for some $C> 0$ not depending on $\epsilon$  by \eqref{eq:good_imitation}. As a result, we have
\[\left|\textup{sw}(\mu * \nu_{\epsilon})(Y) - \mu * \nu_{\epsilon}(Y)\right| \le C\epsilon^{1/2} \]
for any Borel subset $Y$ of $\C$, so $\textup{sw}(\mu * \nu_{\epsilon}) - \mu * \nu_{\epsilon}$ \weaks converges to $0$ as $\epsilon$ tends to $0$. Furthermore, the measures $\mu * \nu_{\epsilon}$ \weaks converge to $\mu$ by Lemma \ref{lem:weaks_smear}.  So the measures $\textup{sw}(\mu * \nu_{\epsilon})$ converge to $\mu$ as $\epsilon$ tends to $0$. Following \cite[Theorem I.6.8]{SaTo97}, the monotone convergence theorem shows $I(\mu) \le 0$.
\end{proof}

\begin{proof}[Proof of Proposition \ref{prop:limit_Holder}]

We may write $\Sigma$ as the union of its set of isolated points with a set of the form $\cup_{k \ge 1} \Sigma_k$, where each $\Sigma_k$ is a compact finite union of intervals and $\Sigma_k \subseteq \Sigma_{k+1}$ for all $k \ge 1$. We will choose $\Sigma_1$ to have capacity $\kappa  > 1$. 

By Proposition \ref{prop:general_energy}, we know that $I(\mu)$ is nonpositive. A countable collection of points has zero capacity; since $I(\mu) > -\infty$, we must have that $\mu(\cup_{k \ge 1} \Sigma_k) = 1$.

For $k \ge 1$, take $\nu_k$ to be the measure defined by $\nu_k(Y) = \mu(Y \cap \Sigma_k)$ for all Borel sets $Y$, and take $\gamma_k = 1 - \nu_k(\Sigma_k)$. Take $\text{sw}(\nu_k)$ to be the sweetened measure of $\nu_k$ defined with respect to $\mu$ and $\Sigma_1$. Supposing $\Sigma$ is contained in $[-C, C]$ for a given $C > 0$, we see that $U^{\nu_k - \mu}$ is at most $\gamma_k \log(2C)$ on $\Sigma$. From the principle of domination \cite[Theorem II.3.2]{SaTo97}, we thus have
\[U^{\nu_k + \gamma_k \mu_{\Sigma_1}}(z) - U^{\mu}(z) \le\gamma_k \log 2C\]
for all $z \in \C$, so the measure of sweetener for $\text{sw}(\nu_k)$ is bounded by $ \gamma_k\log_\kappa (2\kappa C)$. The limit of the $\gamma_k$ is $0$, so it follows that the measures $\text{sw}(\nu_k)$ \weaks converge to $\mu$. Since $\mu$ satisfied the first condition of Theorem \ref{thm:main}, these measures also satisfy this condition by \eqref{eq:sweetened}.

It thus suffices to prove the proposition in the case that $\Sigma$ is a compact finite union of intervals 
\[\Sigma = \bigcup_{i \le n} [x_i, y_i]\quad\text{with }\, x_1 < y_1 < \dots < x_n < y_n.\]
Take $\mu_-$ to be the restriction of $\mu$ to $\bigcup_{i \le n} [x_i, \tfrac{1}{2}(x_i + y_i)]$, and take $\mu_+ = \mu - \mu_-$. Take $\nu_{\epsilon}$ as in Notation \ref{notat:balbump}, and define a measure $\nu^*_{\epsilon}$ on $[-\epsilon, -\epsilon^2]$ by $\nu^*_{\epsilon}(Y) = \nu_{\epsilon}(-Y)$ for every Borel set $Y$. If $2 \epsilon$ is smaller than $y_i - x_i$ for $i \le n$, the measure
\[\mu_{\epsilon} = \mu_- * \nu_{\epsilon} + \mu_+ *\nu_{\epsilon}^*\]
has support contained in $\Sigma$. Applying \eqref{eq:good_imitation} and its analogue for $\nu_{\epsilon}^*$ gives
\[U^{\mu_- *\nu_{\epsilon} }(z) - U^{\mu_-}(z) \le C_1\epsilon^{1/2}\quad\text{and}\quad U^{ \mu_+ * \nu_{\epsilon}^* }(z) - U^{\mu_+}(z) \le C_1\epsilon^{1/2}\]
for all $z \in \C$ and for some $C_1 > 0$ depending just on $\Sigma$. So
\[U^{\mu_{\epsilon}}(z) - U^{\mu}(z) \le 2C_1 \epsilon^{1/2},\] 
so $\text{sw}(\mu_{\epsilon})$ has measure of sweetener bounded by $2C_1 \epsilon^{1/2}/\log \kappa$. As in the proof of Proposition \ref{prop:general_energy}, this gives that $\text{sw}(\mu_{\epsilon}) - \mu_{\epsilon}$ converges to $0$ as $\epsilon$ tends to $0$.

From Lemma \ref{lem:weaks_smear} and its analogue for $\nu_{\epsilon}^*$, we find that $\mu_- *\nu_{\epsilon}$  has limit $\mu_{-}$ and $\mu_+* \nu_{\epsilon}^*$ has limit $\mu_+$ as $\epsilon$ tends to $0$, so $\mu_{\epsilon}$ has \weaks limit $\mu$. So the measures $\text{sw}(\mu_{\epsilon})$ \weaks converge to $\mu$ as $\epsilon$ tends to $0$. These measures are H\"{o}lder by Remark \ref{rmk:unweighted_holder} since $\mu_{\epsilon}$ is H\"{o}lder. Since $\mu$ satisfied condition (1) of Theorem \ref{thm:main}, these measures also satisfy this condition by \eqref{eq:sweetened}. Thus, given any sequence $\epsilon_1 >\epsilon_2 > \dots $ tending to $0$ with $\epsilon_1$ sufficiently small, we see that the measures $\mu_k = \textup{sw}(\mu_{\epsilon_k})$ satisfy the conditions of the proposition.
\end{proof}

\subsection{The limit of Smyth's method}
\begin{notat}
\label{notat:general_optimization}
Take $\Sigma$ to be a closed, possibly unbounded subset of $\R$ containing at most countably many connected components, and take $F: \Sigma \to \R$ to be some continuous function. In the case that $\Sigma$ is unbounded, we assume that, for any sequence $x_1,x_2, \dots $ of points in $\Sigma$ satisfying $\lim_{i \to \infty}|x_i| = \infty$, we have
\begin{equation}
\label{eq:unbounded_restriction}
\lim_{i \to \infty} F(x_i)/\log |x_i| = +\infty.
\end{equation}
We also assume that $\Sigma$ has capacity greater than $1$.

Take $\alpha_1, \alpha_2, \dots$ to be an enumeration of the algebraic integers whose conjugates all lie in $\Sigma$. We take $n_i$ to be the degree of $\alpha_i$, $P_i$ to be its minimal polynomial, and $\alpha_{i1}, \dots, \alpha_{in_i}$ to be its conjugates. The mean value of $F$ on the conjugates of $\alpha_i$ is $  \sum_{j \le n_i}F(\alpha_{ij})/n_i$, and we are interested in the limit point
\[\lambda(\Sigma, F) \,:=\, \liminf_{i \to \infty} \sum_{j \le n_i}  F(\alpha_{ij})/n_i \,=\,\liminf_{i \to \infty} \int_{\Sigma} F d\mu_{P_i}. \]
\end{notat}

\begin{ex}
\label{ex:SSS}
Take $F(x) = x$ and $\Sigma = \R^{\ge 0}$. Then the Schur--Siegel--Smyth trace problem reduces to calculating $\lambda_{\text{SSS}}\,=\,\lambda(\Sigma, F)$.
\end{ex}

With Notation \ref{notat:general_optimization}, we have defined a class of optimization problems where Smyth's approach can be applied \cite{Smyth84a}. As a consequence of Theorem \ref{thm:main}, we can show that the limit of Smyth's method suffices to calculate $\lambda(\Sigma, F)$. In the case that $\Sigma$ is unbounded, this starts with the following result, which makes use of the condition \eqref{eq:unbounded_restriction}.
\begin{lem}
\label{lem:secret_bound}
Given $\Sigma$ and $F$ as in Notation \ref{notat:general_optimization}, there is a compact subset $\Sigma_0$ of $\Sigma$ so, for any probability measure $\mu$ with compact support contained in $\Sigma$ but not contained in $\Sigma_0$, there is a probability measure $\mu'$ with support contained in $\Sigma_0$ such that $\int F d\mu > \int F d\mu'$ and such that $\int \log|Q|d\mu' \ge 0$ for every integer polynomial $Q$ for which $\int \log|Q| d\mu \ge 0$.
\end{lem}
\begin{proof}
Take $\Sigma_1$ to be a compact finite union of intervals contained in $\Sigma$ of capacity $\kappa > 1$, and choose $L > 1$ so $\Sigma_1$ is contained in $[-L, L]$. Take $\mu_1$ to be the unweighted equilibrium measure for $\Sigma_1$. Then we have
\[U^{\mu_1}(z) \le -\log \kappa \quad\text{for all } \,z \in \C\]
by \cite[(I.1.4)]{SaTo97}. Take $R = L(\kappa - 1)^{-1}$; this is chosen so $\log (R + L) - \log R$ equals $\log \kappa$. Then, given any probability measure $\nu$ with compact support contained in $\R \backslash(-R, R)$, we have
\[U^{\nu}(x) = \int -\log|x - t| d\nu(t) \ge \int -\log(|t| + L)d\nu(t)  \ge -\log \kappa - \int \log|t| d\nu(t)\]
for all $x \in \Sigma_1$, with the final inequality following from the fact that $\log(|t| + L) - \log|t|$ attains its maximum outside $(-R, R)$ for $t = - R, R$. So
\begin{equation}
\label{eq:intlogt}
U^{\mu_1}(z) - U^{\nu}(z) \le \int \log|t| d\nu(t)
\end{equation}
for all $z \in \Sigma_1$. By the principle of domination \cite[Theorem II.3.2]{SaTo97}, this inequality holds for all $z \in \C$.

Given any $x \in \Sigma$, we know that $F(z) \ge F(x)$ for $z$ outside some compact subset of $\Sigma$ by the growth condition on $F$. Since $F$ is continuous, it must attain its minimum, and we take $c_0 = \min(0, \min_{x \in \Sigma} F(x))$. We also take $c_1 = \max\left(0, \int F d\mu_1\right)$. Choose $M > \max(\kappa, L, R)$ such that
\begin{equation}
\label{eq:infinitymorelikefinity}
\frac{F(x)}{\log|x|} > \frac{2(c_1 - c_0)}{\log \kappa} \quad\text{ for } x \in \Sigma \backslash [-M, M].
\end{equation}
We claim that $\Sigma_0 = [-M, M] \cap \Sigma$ satisfies the conditions of the proposition. So, given $\mu$ as in the statement of the lemma, take $\nu$ to be the restriction of $\mu$ to $[-M, M]$, and consider the measure $\textup{sw}(\nu)$ defined with respect to $\mu$ and $\Sigma_1$. In this case, the quantity \eqref{eq:def_B} satisfies
\begin{equation}
\label{eq:Blicious}
B \le \int \log|t| d(\mu - \nu)(t)
\end{equation}
by \eqref{eq:intlogt} since $\mu - \nu$ is supported outside $(-R, R)$. Defining $\beta$ and $\gamma$ as in Definition \ref{defn:sweet}, we have
\[\int F d\textup{sw}(\nu) = (\beta + \gamma - \beta \gamma)\int F  d\mu_1 + (1 - \beta)\int F d\nu.\]
We first note that
\[(\beta + \gamma - \beta \gamma)\int F  d\mu_1 \le (\beta + \gamma - \beta \gamma)c_1 \le \left(\frac{B}{\log \kappa} + \gamma\right) c_1,\]
as $\beta - \beta\gamma$ is at most $\beta$, which is at most $B/\log \kappa$. We also have
\begin{align*}
(1 - \beta)\int F d\nu =&   - \beta \int F d\nu - \int F d(\mu - \nu) +\int F d\mu\\
 \le& -\frac{B}{\log \kappa} c_0  - \int F d(\mu - \nu) + \int F d\mu.
\end{align*}
But
\begin{alignat*}{2}
-\int F d(\mu - \nu)  &\,<\, -\frac{2(c_1 - c_0)}{\log \kappa} \int \log|t| d(\mu - \nu)(t) \qquad&&\text{by \eqref{eq:infinitymorelikefinity}}\\
&\,\le\, -\frac{2(c_1 - c_0)}{\log \kappa} \max(B, \gamma\log M)&&\text{by \eqref{eq:Blicious}},
\end{alignat*}
and these inequalities together imply
\[\int  F d\textup{sw}(\nu)  <  \int F d\mu.\]
Taking $\mu' = \textup{sw}(\nu)$, the lemma follows from \eqref{eq:sweetened}.
\end{proof}

\begin{thm}
\label{thm:general_Smyth}
Take $\Sigma$ and $F$ as in Notation \ref{notat:general_optimization}. Take $\lambda_{\textup{Smyth}}(\Sigma, F)$ to be the least upper bound of  the real $\lambda$  for which there is a finite sequence $Q_1, \dots, Q_N$ of nonzero integer polynomials and a finite sequence $a_1, \dots, a_N$ of positive numbers such that
\begin{equation}
\label{eq:gen_Smyth}
F(x) \ge \lambda + \sum_{i =1}^N a_i \log|Q_i(x)|\quad\text{for all } \,x \in \Sigma.
\end{equation}
Then $\lambda(\Sigma, F) = \lambda_{\textup{Smyth}}(\Sigma, F)$.
\end{thm}
\begin{proof}
We first will show that $\lambda_{\textup{Smyth}}(\Sigma, F)\le \lambda(\Sigma, F)$. To see this, take $P$ to be any nonconstant monic integer polynomial, and suppose \eqref{eq:gen_Smyth} holds for a given $\lambda$ and $a_1, \dots, a_N$. Following the argument appearing after Theorem \ref{thm:Serre}, we see that
\[\int F d\mu_P \ge \lambda + \frac{1}{\deg P}\sum_{i = 1}^N a_i\log|\text{res}(P, Q_i)|,\]
and the right hand side of this equation is at least $\lambda$ unless some root of $P$ is a root of some $Q_i$. We may conclude that $\lambda \le \lambda(\Sigma, F)$, and hence that $\lambda_{\textup{Smyth}}(\Sigma, F)\le \lambda(\Sigma, F)$.

We now need to show that $\lambda(\Sigma, F)$ is at most $\lambda_{\textup{Smyth}}(\Sigma, F)$. Take $Q_1, Q_2, \dots$ to be an enumeration of the nonconstant integer polynomials, and take $\Sigma_0$ to be a compact subset of $\Sigma$ of capacity greater than $1$ obeying the conditions of Lemma \ref{lem:secret_bound}. Given a positive integer $N$, take $\mathscr{M}_N$ to be the set of probability measures $\mu$ with compact support contained in $\Sigma$ satisfying $\int \log|Q_i| d\mu > -\infty$ for $i \le N$, and consider the convex subset
\[V_N = \left\{\left(\int F d\mu, \int \log|Q_1| d\mu, \dots , \int \log|Q_N| d\mu\right)\,:\,\, \mu \in \mathscr{M}_N\right\} \subseteq \R^{N + 1}.\]
Given $\lambda \in \R$, we also define the convex set
\[W_{\lambda, N} = \{(x_0, \dots, x_{N}) \in\R^{N+1} \,:\,\, x_0 \le \lambda \text{ and } x_i \ge 0 \text{ for } 1 \le i \le N\},\]
and we take $W_{\lambda, N}^{\circ}$ to be the interior of this set.

As $\inf_{x\in \Sigma} F(x) > -\infty$, we see that $V_N$ is disjoint from $W_{\lambda, N}$ for some sufficiently negative $\lambda$. At the same time, taking $\mu_{\Sigma_0}$ to be the unweighted equilibrium measure of $\Sigma_0$, we have $U^{\mu_{\Sigma_0}}(z) \le I(\mu_{\Sigma_0}) < 0$ for all $z \in \C$ by \cite[(I.1.4)]{SaTo97}, so \eqref{eq:logQ_pot} gives  $\int\log|Q_i| d\mu_{\Sigma_0} >  0$ for all $i$. Taking $c_1 = \int F d\mu_{\Sigma_0}$, we see that $W_{c_1 + 1, N}^{\circ}$ meets $V_N$.

Take $\lambda_N$ to be the least upper bound of the $\lambda$ for which $V_N$ and $W_{\lambda, N}^{\circ}$ are disjoint. Then $V_N$ and $W_{\lambda_N - N^{-1}, N}^{\circ}$ are disjoint. By the hyperplane separation theorem \cite[Theorem 4e]{Klee68}, there is then a nonzero element $\mathbf{a}$ in $\R^{N+1}$ for which
\begin{equation}
\label{eq:avaw}
\mathbf{a} \cdot \vv < \mathbf{a} \cdot \ww \quad\text{for all }\, \vv \in V_N \text{ and } \ww \in W_{\lambda_N - N^{-1}, N}^{\circ}.
\end{equation}
Writing $\mathbf{a}$ in the form $(a_0, \dots, a_N)$, we see that $a_1, \dots, a_N$ must all be nonnegative, since otherwise the right hand side of this inequality can be made arbitrarily negative. Taking $\vv$ to be a vector  in $W_{c_1 + 1, N}^{\circ} \cap V_N$, we must have $\mathbf{a} \cdot \vv < \mathbf{a} \cdot \ww$ for all $\ww$ in $W_{ \lambda_N - N^{-1}, N}$. This implies that $a_0$ is negative; we may assume it is $-1$.

Then \eqref{eq:avaw} takes the form
\[\int F d\mu > \lambda_N - N^{-1} + \sum_{i \le N} a_i  \int \log|Q_i| d\mu - \epsilon\]
for all $\mu \in \mathscr{M}_N$ and any $\epsilon > 0$. Applying this to the Dirac measure $\delta_x$ and letting $\epsilon$ tend to $0$ gives
\[F(x) \ge \lambda_N - N^{-1} + \sum_{i \le N} a_i \log|Q_i(x)| \quad\text{for all }\, x \in \Sigma\]
unless $x$ is a root of some $Q_i$ with $i \le N$. For such an $x$, the inequality follows since the left hand side is finite and the right is $-\infty$.

So
\[\lambda_{\textup{Smyth}}(\Sigma, F) \ge \limsup_{N \to \infty} (\lambda_N - N^{-1}) =  \limsup_{N \to \infty} \lambda_N. \]

In addition, since $W_{\lambda_N + N^{-1}, N}$ meets $V_N$, there is a measure $\mu_N \in \mathscr{M}_N$ so $\int \log|Q_i| d\mu_N \ge 0$ for $i \le N$ and $\int F d\mu_N \le \lambda_N + N^{-1}$. Using Lemma \ref{lem:secret_bound}, we will assume that this measure has support contained in $\Sigma_0$ for every $N$. By Helly's selection theorem \cite[Theorem 0.1.3]{SaTo97}, some subsequence of the $\mu_N$ converge; take $\mu$ to be their limit. Given a nonzero integer polynomial $Q$, $\log|Q|$ is upper semicontinuous, so we may apply the monotone convergence theorem as in \cite[(0.1.2)]{SaTo97} to conclude
\[\int \log|Q|d\mu \ge \liminf_{N \to \infty}\int \log|Q| d\mu_N \ge 0.\]
The definition of \weaks convergence gives
\[\int F d\mu \le \limsup_{N \to \infty}\int Fd\mu_N \le \limsup_{N \to \infty} (\lambda_N + N^{-1}) = \limsup_{N \to \infty} \lambda_N.\]
 So $\mu$ is a probability measure with compact support contained in $\Sigma$ satisfying the first condition of Theorem \ref{thm:main},  and it satisfies $\int F d\mu \le \lambda_{\textup{Smyth}}(\Sigma, F)$. The theorem follows from Theorem \ref{thm:main}.
\end{proof}

The framework of Notation \ref{notat:general_optimization} can be used to study abelian varieties over finite fields through the following proposition.
\begin{prop}
\label{prop:Honda}
Choose a prime power $q$. Take $\Sigma$ to be the interval $\left[-2\sqrt{q},\, 2\sqrt{q}\right]$ and define $F: \Sigma \to \R$ by $F(x) = \log|q + 1- x|$. Then, for any $\epsilon > 0$, there are infinitely many $\FFF_q$-simple abelian varieties $A$ over $\FFF_q$ satisfying
\[\big(\# A(\FFF_q)\big)^{1/\dim A} \,\le\, \exp\big(\lambda( \Sigma, F) + \epsilon\big),\]
infinitely many more satisfying
\[\big(\# A(\FFF_q)\big)^{1/\dim A} \,\ge\, \exp\big(-\lambda( \Sigma, -F) -  \epsilon\big),\]
but only finitely many satisfying
\[\big(\# A(\FFF_q)\big)^{1/\dim A} \,\not\in \, \big[\exp\big(\lambda( \Sigma, F) - \epsilon\big),\,\, \exp\big(-\lambda( \Sigma, -F) +  \epsilon\big)\big].\]
\end{prop}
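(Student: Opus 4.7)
The plan is to use Honda--Tate theory to translate the statement about abelian varieties into one about integrals of $F$ against counting measures of totally real algebraic integers with conjugates in $\Sigma$, at which point the proposition reduces to the very definition of $\lambda(\Sigma, \pm F)$ as a $\liminf$.

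First I would recall that Honda--Tate theory gives a bijection between $\FFF_q$-isogeny classes of simple abelian varieties over $\FFF_q$ and Galois-conjugacy classes of Weil $q$-numbers $\pi$, i.e., algebraic integers with $|\sigma(\pi)| = \sqrt{q}$ for every embedding $\sigma\colon \QQ(\pi) \hookrightarrow \C$. Discarding the at most two isogeny classes coming from the real Weil numbers $\pm\sqrt{q}$ (which only arise when $q$ is a perfect square), each non-real Weil number $\pi$ satisfies $\bar\pi = q/\pi \in \QQ(\pi)$, and $\beta := \pi + \bar\pi$ is a totally real algebraic integer whose conjugates lie in the open interval $(-2\sqrt{q}, 2\sqrt{q}) \subset \Sigma$, with $\deg \beta = \tfrac{1}{2}\deg\pi$. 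Conversely, any such $\beta$ is recovered via the root $\pi = \tfrac{1}{2}\bigl(\beta + \sqrt{\beta^2 - 4q}\bigr)$ of $T^2 - \beta T + q$, giving a bijection (up to Galois conjugacy) between non-real Weil $q$-numbers and totally real algebraic integers with all conjugates in $(-2\sqrt{q}, 2\sqrt{q})$.

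For the point-count formula, let $\beta$ have degree $n$ and conjugates $\beta_1, \dots, \beta_n$. The simple abelian variety $A$ attached to the corresponding $\pi$ has dimension $en$ for some positive integer $e$ (the period of the Brauer class of the CM algebra $\QQ(\pi)$), and its characteristic polynomial of Frobenius is $M_\pi(T)^e$. Hence
\[\#A(\FFF_q) \,=\, \prod_{i=1}^{2n}(1 - \pi_i)^e \,=\, \prod_{j=1}^n (q + 1 - \beta_j)^e,\]
and since $q + 1 - \beta_j \ge (\sqrt{q} - 1)^2 > 0$,
\[\log\bigl(\#A(\FFF_q)\bigr)^{1/\dim A} \,=\, \frac{1}{n}\sum_{j=1}^n \log(q + 1 - \beta_j) \,=\, \int_{\Sigma} F\,d\mu_{P_\beta},\]
a quantity depending only on $\beta$ and not on $e$.

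To finish, I note that $\Sigma$ has capacity $\sqrt{q} > 1$, so there are infinitely many totally real algebraic integers with conjugates in $\Sigma$; since distinct $\beta$'s yield distinct isogeny classes of $A$, the enumeration $\{A_i\}$ of simple abelian varieties (with at most two omissions) matches the enumeration $\{\alpha_i\}$ of Notation \ref{notat:general_optimization}. All three assertions are then immediate from the definition of $\liminf$ applied to the sequences $\int F\,d\mu_{P_i}$ and $\int (-F)\,d\mu_{P_i}$: infinitely many terms satisfy the first bound (by $\lambda(\Sigma, F) + \epsilon$), infinitely many satisfy the second (by $-\lambda(\Sigma, -F) - \epsilon$, using that $-\lambda(\Sigma, -F) = \limsup \int F\, d\mu_{P_i}$), and only finitely many lie outside the resulting interval. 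No serious obstacle arises; Theorem \ref{thm:main} and Corollary \ref{cor:general_Smyth} are not logically required for Proposition \ref{prop:Honda} itself, but they are what endows $\lambda(\Sigma, \pm F)$ with computable content, thereby converting this proposition into a usable tool for results like Corollary \ref{cor:Serre2}.
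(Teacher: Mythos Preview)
Your argument is correct and follows exactly the Honda--Tate route the paper has in mind; the paper's own proof is simply the one-line citation ``This follows from Honda--Tate theory \cite{Honda68}; see \cite[Proposition 2.1]{Kade21},'' and your write-up is a faithful expansion of that reference.
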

\begin{proof}
This follows from Honda--Tate theory \cite{Honda68}; see \cite[Proposition 2.1]{Kade21}.
\end{proof}

In the case that $q$ is a large square, the optimization problem of Proposition \ref{prop:Honda} approximately reduces to the trace problem. Specifically, we have the following result, whose argument follows the proof of \cite[Proposition 2.4]{Kade21}.
%Kadets is the opposite inequality, same idea.
\begin{prop}
\label{prop:Kade}
Define $ \lambda_{\textup{SSS}}$ as in Example \ref{ex:SSS}. Given any $\epsilon > 0$, there is $C > 0$ so that, for any square prime power $q$ satisfying $q > C$, there are infinitely many $\FFF_q$-simple abelian varieties satisfying
\[\#A(\FFF_q) \ge (q + 2\sqrt{q} +1 - \lambda_{\textup{SSS}} - \epsilon)^{\dim A}\]
and infinitely many more satisfying
\[\#A(\FFF_q) \le (q - 2\sqrt{q} + 1 + \lambda_{\textup{SSS}} + \epsilon)^{\dim A}.\]
\end{prop}
\begin{proof}
From the proof of Theorem \ref{thm:general_Smyth}, we see that there is a measure $\mu$ and real number $C_0 > 4$ so $\mu$ has support contained in $[0, C_0]$, so $\mu$ satisfies the criteria of Theorem \ref{thm:main}, and so $\int x d\mu(x) = \lambda_{\textup{SSS}}$. Suppose $q$ is a square prime power satisfying $4 \sqrt{q} \ge C_0$,
and define measures $\mu^+_q$ and $\mu^-_q$ by
\[\mu^+_q(Y) = \mu(Y + 2\sqrt{q}) \quad\text{and}\quad \mu^-_q(Y) = \mu(-Y + 2\sqrt{q})\]
for every Borel subset $Y$ of $\R$. These measures both have support in $[-2\sqrt{q}, 2\sqrt{q}]$. Since $2\sqrt{q}$ is a rational integer, these measures satisfy condition (1) of Theorem \ref{thm:main} and hence satisfy condition (2) with $\Sigma = [-2\sqrt{q}, 2\sqrt{q}]$.

We have
\[\int \log|q + 1 -x| d\mu_q^-(x) = \int \log \left|q - 2\sqrt{q} + 1 + x\right| d\mu(x).\]
Taking a Taylor expansion, we also have 
\[\log\left|q - 2\sqrt{q} + 1 + x\right| \,=\, \log| q - 2\sqrt{q} + 1 + \lambda_{\text{SSS}}| + \frac{ x- \lambda_{\text{SSS}}}{q - 2\sqrt{q} + 1 + \lambda_{\text{SSS}}} + O(q^{-2}),\]  
for sufficiently large $q$ and all $x$ in $[0, C_0]$, where the implicit constant depends just on $C_0$. Integrating against $\mu$ gives
\[\int \log|q + 1 -x| d\mu_q^-(x)  \,=\, \log| q - 2\sqrt{q} + 1 + \lambda_{\text{SSS}}| + O(q^{-2})\]
for sufficiently large $q$, giving the result for small point counts by Proposition \ref{prop:Honda}. An analogous argument for $\mu^+_{q}$ gives the result for large point counts.
\end{proof}

\begin{rmk}
Recall that, in the course of the proof of Theorem \ref{thm:main}, we have imposed restrictions on the coefficients of the polynomials $R_k$ modulo $4$. The same argument allows us to restrict these coefficients to be in certain congruence classes modulo $4q$. Following the argument of \cite{BCLPS21}, this freedom can be used to force the infinite families of abelian varieties constructed in Proposition \ref{prop:Honda} to all be geometrically simple and ordinary.
\end{rmk}

\subsection{Serre's example}
\label{ssec:Serre}
A natural method to verify that a measure satisfies the condition of Theorem \ref{thm:main} is to use the following proposition, which again takes advantage of the integrality of the resultant of integer polynomials.
\begin{prop}
\label{prop:need_bal}
Take $\Sigma$ to be a compact subset of $\R$ of capacity greater than $1$ with at most countably many connected components, and take $\mu$ to be a probability measure on $\Sigma$. Suppose there is a finite sequence $Q_1, \dots, Q_N$ of distinct irreducible primitive integer polynomials and a finite sequence $a_1, \dots, a_N$ of positive numbers such that $\sum_{i \le N} a_i \deg Q_i \le 1$ and \begin{equation}
\label{eq:need_bal}
U^{\mu}(z) \le \sum_{i \le N} -a_i \log|Q_i(z)| \quad\text{for all } z \in \C.
\end{equation}
Suppose further that $\int_{\Sigma} \log|Q_i| d\mu$ is nonnegative for $i \le N$.

Then $\mu$ obeys the equivalent conditions of Theorem \ref{thm:main}.
\end{prop}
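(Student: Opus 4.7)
The plan is to verify condition (1) of Theorem \ref{thm:main} directly: for every nonzero integer polynomial $Q$, show that $\int_{\Sigma} \log|Q| d\mu \ge 0$. Both sides of this inequality are additive under multiplication of $Q$, and $\log|c| \ge 0$ for every nonzero integer $c$, so it suffices to handle the case where $Q$ is an irreducible primitive integer polynomial. If $Q$ is associate to one of $Q_1, \dots, Q_N$, the claim is exactly the final hypothesis of the proposition. Otherwise, $Q$ and each $Q_i$ are distinct irreducible elements of $\Z[z]$ and so share no complex roots (by Gauss's lemma on their gcd in $\Z[z]$); in particular each resultant $\res(Q, Q_i)$ is a nonzero integer.

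The heart of the argument is to unfold $\int_{\Sigma}\log|Q|\,d\mu$ in two ways. Writing $Q(z) = c_Q \prod_{j} (z - \alpha_j)$ for the leading coefficient and complex roots of $Q$, integrating term-by-term gives
\[\int_{\Sigma} \log|Q(z)| d\mu(z) = \log|c_Q| - \sum_{j} U^{\mu}(\alpha_j).\]
Since none of the $\alpha_j$ is a root of any $Q_i$, the right-hand side of \eqref{eq:need_bal} is finite at each $\alpha_j$, which simultaneously guarantees $U^{\mu}(\alpha_j) < \infty$ (making the splitting above legitimate) and furnishes the bound $U^{\mu}(\alpha_j) \le -\sum_i a_i \log|Q_i(\alpha_j)|$. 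Summing over $j$ and invoking the resultant identity $\log|\res(Q, Q_i)| = \deg(Q_i) \log|c_Q| + \sum_j \log|Q_i(\alpha_j)|$ yields
\[\int_{\Sigma} \log|Q| d\mu \,\ge\, \log|c_Q| \cdot \left(1 - \sum_{i \le N} a_i \deg Q_i\right) + \sum_{i \le N} a_i \log|\res(Q, Q_i)|.\]

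Both summands on the right are now manifestly nonnegative: the first because $|c_Q| \ge 1$ and the hypothesis $\sum_i a_i \deg Q_i \le 1$, and the second because each $|\res(Q, Q_i)| \ge 1$ as a nonzero integer. I do not anticipate any real obstacle; the one point to watch is the integrability needed for the termwise splitting of $\int \log|Q| d\mu$, which is precisely what the pointwise bound \eqref{eq:need_bal} supplies at the relevant evaluation points. It is worth remarking that the normalization $\sum a_i \deg Q_i \le 1$ enters exactly to absorb the $\deg(Q_i) \log|c_Q|$ contributions from each resultant identity — without it, polynomials $Q$ with $|c_Q| > 1$ would not be controlled by this method.
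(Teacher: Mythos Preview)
Your proposal is correct and follows essentially the same route as the paper's proof: reduce to irreducible $Q$, handle the $Q_i$ case by hypothesis, and for the remaining $Q$ expand $\int_\Sigma \log|Q|\,d\mu$ via the potential, apply \eqref{eq:need_bal} at the roots of $Q$, and conclude using the integrality of the resultants $\res(Q,Q_i)$ together with $\sum_i a_i\deg Q_i\le 1$. The paper phrases the middle step in terms of the counting measure $\mu_Q$ rather than summing over roots explicitly, but the computation is identical; your version is in fact a bit more explicit about why the termwise splitting is justified (finiteness of $U^{\mu}(\alpha_j)$).
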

\begin{proof}
Given a nonzero integer polynomial $Q$, we need to show
\[\int_{\Sigma} \log|Q| d\mu \ge 0.\]
We may assume that $Q$ is irreducible and of positive degree $n$ without loss of generality. If it is a multiple of some $Q_i$, the inequality follows by assumption. Otherwise, take $c$ to be the leading term of $Q$. We have
\begin{align*}
\int_{\Sigma} \log|Q| d\mu \,&=\, \log|c| - n\int U^{\mu} d\mu_Q\\
&\ge\, \sum_{i \le N} a_i\left(\deg Q_i \cdot\log|c| +n \int \log|Q_i| d\mu_Q\right) \ge 0,
\end{align*}
with the final inequality following since the resultants $\res(Q, Q_i)$ are nonzero rational integers. 
\end{proof}

\begin{ex}
\label{ex:Serre}
Take $a = .087353$, $b = 4.411076$, and $\gamma = .215485$. We consider the probability measure 
\[\mu = \gamma \nu_{[a, b]} + (1 - \gamma) \mu_{[a, b]},\]
where $\nu_{[a, b]}$ is defined as in Notation \ref{notat:balbump} and $\mu_{[a, b]}$ is the unweighted equilibrium measure on $[a, b]$. The parameters $a$ and $b$  are rounded forms of the ones considered in \cite{Serre98b}.

From \eqref{eq:first_balay} and \cite[(8) in Appendix B]{AgPe08}, we know that
\[U^{\mu}(z) + \gamma \log|z| = \begin{cases} C &\text{ if } z \in [a, b] \\ \le C&\text{ otherwise}\end{cases}\]
with
\[ C = - \log\left(\frac{b - a}{4}\right) +  \gamma \log\left(\frac{a + 2\sqrt{ab} + b}{4}\right) \approx -1.3 \cdot10^{-7}.\]

From Lemma \ref{lem:balay} (3) and \cite[(10) in Appendix B]{AgPe08}, we also have
\[\int \log|x| d\mu(x) = -\gamma \log\left(\frac{a + 2\sqrt{ab} + b}{4ab}\right)  + (1 - \gamma)  \log\left(\frac{a + 2\sqrt{ab} + b}{4}\right) \approx 1.5 \cdot10^{-6}.\]
Applying Proposition \ref{prop:need_bal} shows that the equivalent conditions of Theorem \ref{thm:main} hold for this measure.

We have $\int_a^b \frac{dx}{\pi \sqrt{(b - x)(x - a)}} =1$; indeed, this integrand gives the form of the unweighted equilibrium measure on $[a, b]$ \cite[Example I.3.5]{SaTo97}. Applying this identity for $\nu_{[a, b]}$ and \cite[(11) in Appendix B]{AgPe08} for $\mu_{[a, b]}$, we have
\[\int x d\mu(x)\, =\, \gamma \sqrt{ab} + (1 - \gamma) \frac{a+b}{2} \,\in\, (1.898303, 1.898304).\]
Theorem \ref{thm:Serre} then follows from applying Theorem \ref{thm:main} to $\mu$.

We have constructed this example from an argument of Serre \cite[Appendix B]{AgPe08} in a letter to Smyth. In this letter Serre showed that, for any $t > 0$ and $\gamma \in [0, 1]$, the inequality
\[x \ge c + t \gamma \log(x) + (1 - \gamma) t \tfrac{1}{\deg\, R} \log|R(x)|\]
can only hold for all $x > 0$ and real polynomials $R$ with leading term and constant term of modulus at least $1$ if $c \le 1.89830\dots$. He did this by integrating  the sides of this inequality first with respect to the measure $\mu_{[a, b]}$, and then with respect to the pushforward of the equilibrium measure on $[b^{-1}, a^{-1}]$ under the map $z \mapsto 1/z$. This latter measure is $\nu_{[a, b]}$, and the convex combination $\mu$ considered above repackages this two-part argument in a single measure.

The choices here are nearly optimal. Specifically, if a measure $\mu$ satisfies the condition of Proposition \ref{prop:need_bal} with  $\Sigma = \R^{\ge 0}$, $N =1$, and $Q_1(z) =z$, then $\int xd\mu(x) \ge 1.898302\dots$ \cite[p. 10]{AgPe08}. But that does not mean that this example is nearly optimal for the trace problem. After all, we have the following proposition.
\end{ex}

\begin{prop}
\label{prop:crater}
Take $\Sigma$ and $F$ as in Notation \ref{notat:general_optimization}. Choose a probability measure $\mu$ with compact support contained in $\Sigma$ such that $\int \log|Q| d\mu \ge 0$ for every nonzero integer polynomial $Q$ and $\int F d\mu = \lambda(\Sigma, F)$, and take $P$ to be an irreducible monic nonconstant integer polynomial satisfying
\[\int_{\Sigma} F d\mu_P < \lambda(\Sigma, F)\]
whose roots are contained in an open subset of $\R$ contained in $\Sigma$.

Then $\int_{\Sigma} \log|P| d\mu = 0$, and the support of $\mu_P$ is disjoint from the support of $\mu$.
\end{prop}
\begin{proof}
Using Lemma \ref{lem:secret_bound}, we will assume without loss of generality that $\Sigma$ is compact. Take $\mathscr{M}$ to be the set of probability measures $\nu$ with support contained in $\Sigma$ that satisfy $\int \log|Q| d\nu \ge 0$ for all integer polynomials $Q$ that are indivisible by $P$.  The counting measure $\mu_P$ is in $\mathscr{M}$, as $\int \log|Q|d\mu_P$ equals $\log |\res(Q, P)|$, which is nonnegative for any such $Q$.

Take $\Sigma_0$ to be a compact subset of $\Sigma$ of capacity $\kappa > 1$ which does not contain any root of $P$. Define $\nu_{\epsilon}$ as in Notation \ref{notat:balbump}. Since the roots of $P$ are contained in the interior of $\Sigma$, we find that the sweetened measure $\text{sw}(\mu_P *\nu_{\epsilon})$ defined with respect to $\mu_P$ and $\Sigma_0$ has support contained in $\Sigma$ for sufficiently small $\epsilon$, so $\text{sw}(\mu_P *\nu_{\epsilon})$  lies in $\mathscr{M}$ by \eqref{eq:sweetened}. As in the proof of Proposition \ref{prop:general_energy}, the measures $\text{sw}(\mu_P* \nu_{\epsilon})$ \weaks converge to $\mu_P$ as $\epsilon$ tends to $0$, so we have
\[\int F d\text{sw}(\mu_P *\nu_{\epsilon}) < \lambda(\Sigma, F)\]
 for all sufficiently small $\epsilon$. Note also that $\int \log|P| d\text{sw}(\mu_P *\nu_{\epsilon})$ is finite. So, if $\int \log|P| d\mu > 0$, there would be some convex combination  $\nu = \gamma \text{sw}(\mu_P *\nu_{\epsilon}) + (1 - \gamma) \mu$  for some choice of real parameters $\gamma, \epsilon \in (0, 1/2)$ such that $\nu$ satisfies the conditions of Theorem \ref{thm:main} and $\int F d\nu < \lambda(\Sigma, F)$. This cannot happen, so we can conclude that $\int \log|P| d\mu = 0$. Considering the convex planar region
\[\left\{ \left(\int F d\nu, \int \log|P| d\nu\right)\,:\,\, \nu \in \mathscr{M}\text{ and } \int \log|P| d\nu > -\infty\right\} \subseteq \R^2,\]
we see that $(\lambda(\Sigma, F) -  \epsilon_1, \epsilon_2)$ is outside this region for any $\epsilon_1, \epsilon_2 \ge 0$ with $\epsilon_1 + \epsilon_2 > 0$. It follows from the standard hyperplane separation theorem \cite[p. 133]{Klee68} that, for some $a \in [0, 1]$, the infimum
\begin{equation}
\label{eq:mu_winner}
\inf_{\nu \in \mathscr{M}}\left( (1 -a) \cdot \int F d\nu \, -\, a \cdot\int \log|P| d\nu\right)
\end{equation}
is attained at $\nu = \mu$. By  considering $\textup{sw}(\mu_P * \nu_{\epsilon})$, we see that $a$ must be positive.

Choose $\epsilon > 0$, take $\mu_{\epsilon}$ to be the restriction of $\mu$ to
\[\big\{x \in \Sigma\,:\,\, |x - \alpha| \ge \epsilon \text{ for each root } \alpha \text{ of } P\big\},\] 
and take $\gamma = 1 - \mu_{\epsilon}(\Sigma)$. We consider the sweetened measures $\textup{sw}(\mu_{\epsilon})$ defined with respect to $\mu$ and $\Sigma_0$, and we define $ \beta$ from this measure as in Definition \ref{defn:sweet}. Applying the principle of domination as in the proof of Proposition \ref{prop:limit_Holder} gives $\beta \ll \gamma$, where the implicit constants here and for the rest of the proof depend just on $\Sigma$, $\Sigma_0$, $P$, and $F$.

Since $P$ is monic and has all its roots in $\Sigma$, and since the support of $\mu - \mu_{\epsilon}$ is contained in the union of intervals of radius $\epsilon$ centered at each root of $P$, we have
\[- \gamma \log \epsilon + \int \log|P| d(\mu - \mu_{\epsilon}) \, \ll\, \gamma\quad\text{and}\quad - \gamma \log \epsilon + \int \log|P| d\mu \, \ll\, 1.\]
Since we have $\beta \ll \gamma$, a convex combination of these inequalities gives
\[- \gamma\log\epsilon  + \int \log|P| d\mu  - (1 - \beta) \int\log |P| d\mu_{\epsilon} \,\ll \,\gamma.\]
Taking $\mu_{\Sigma_0}$ to be the unweighted equilibrium measure on $\Sigma_0$, we have $\int \log|P| d\mu_{\Sigma_0} \ge 0$ since $\Sigma_0$ had capacity greater than $1$. So we are left with
\[- \gamma  \log \epsilon + \int \log|P| d(\mu - \text{sw}(\mu_{\epsilon}))  \ll \gamma.\]
We also have
\[\int Fd(\mu - \text{sw}(\mu_{\epsilon})) \gg -\gamma\]
by the bounds on $\beta$ since $F$ is continuous. So
\[\left( (1 -a) \cdot \int F d(\mu -\text{sw}(\mu_{\epsilon}))\, -\, a \cdot\int \log|P| d(\mu -\text{sw}(\mu_{\epsilon}))\right)  + a\gamma \log \epsilon \,\gg\, -\gamma.\]

But $\text{sw}(\mu_{\epsilon})$ lies in $\mathscr{M}$ by \eqref{eq:sweetened}. Since \eqref{eq:mu_winner} is minimized at $\mu$ in $\mathscr{M}$, we have
\[\left( (1 -a) \cdot \int F d(\mu -\text{sw}(\mu_{\epsilon}))\, -\, a \cdot\int \log|P| d(\mu -\text{sw}(\mu_{\epsilon}))\right) \le 0.\]
We are left with $a \gamma \log \epsilon \gg -\gamma$, so $\gamma$ is $0$ if $\epsilon$ is sufficiently small.  This implies that none of the roots of $P$ are in the support of $\mu$.
\end{proof}
This proposition shows that the measure of Example \ref{ex:Serre} does not give the ideal bound on the trace problem. Indeed, the $14$ totally positive algebraic integers $\alpha$ with $\tr(\alpha)/\deg(\alpha)$ at most  $1.793$ all lie in the support of $\mu$ \cite{WaWuWu21}, while the support of an optimal measure would contain none of these points by Proposition \ref{prop:crater}.

To make progress past this point on upper bounds for the trace problem, the natural next step is to apply Proposition \ref{prop:need_bal} with more than one auxiliary polynomial $Q_i$. This can be done; the technique of balayage appearing in e.g. \cite[Theorem II.4.4]{SaTo97} gives a natural way of producing measures satisfying the condition of Proposition \ref{prop:need_bal}, with Example \ref{ex:Serre} showing this in one special case. However, as the number of polynomials increases, the complexity of the corresponding optimization problem increases. We hope to return to these more complex optimization problems in future work.

\bibliography{references}{}

\providecommand{\bysame}{\leavevmode\hbox to3em{\hrulefill}\thinspace}
\providecommand{\MR}{\relax\ifhmode\unskip\space\fi MR }
% \MRhref is called by the amsart/book/proc definition of \MR.
\providecommand{\MRhref}[2]{%
  \href{http://www.ams.org/mathscinet-getitem?mr=#1}{#2}
}
\providecommand{\href}[2]{#2}
\begin{thebibliography}{10}

\bibitem{ABP06}
Juli\'{a}n Aguirre, Mikel Bilbao, and Juan~Carlos Peral, \emph{The trace of
  totally positive algebraic integers}, Math. Comp. \textbf{75} (2006),
  no.~253, 385--393. \MR{2176405}

\bibitem{AgPe07}
Juli\'{a}n Aguirre and Juan~Carlos Peral, \emph{The integer {C}hebyshev
  constant of {F}arey intervals}, Publ. Mat. (2007), no.~Proceedings of the
  Primeras Jornadas de Teor\'{\i}a de N\'{u}meros, 11--27.

\bibitem{AgPe08}
\bysame, \emph{The trace problem for totally positive algebraic integers},
  Number theory and polynomials, London Math. Soc. Lecture Note Ser., vol. 352,
  Cambridge Univ. Press, Cambridge, 2008, With an appendix by Jean-Pierre
  Serre, pp.~1--19.

\bibitem{Amor90}
Francesco Amoroso, \emph{Sur le diam{\`e}tre transfini entier d'un intervalle
  r{\'e}el}, Annales de l'institut Fourier, vol.~40, 1990, pp.~885--911.

\bibitem{BLPS_flat99}
Wojciech Banaszczyk, Alexander~E. Litvak, Alain Pajor, and Stanislaw~J. Szarek,
  \emph{The flatness theorem for nonsymmetric convex bodies via the local
  theory of {B}anach spaces}, Math. Oper. Res. \textbf{24} (1999), no.~3,
  728--750.

\bibitem{Borw02}
Peter Borwein, \emph{Computational excursions in analysis and number theory},
  CMS Books in Mathematics/Ouvrages de Math\'{e}matiques de la SMC, vol.~10,
  Springer-Verlag, New York, 2002.

\bibitem{CSZ_Cheb17}
Jacob~S. Christiansen, Barry Simon, and Maxim Zinchenko, \emph{Asymptotics of
  {C}hebyshev polynomials, {I}: subsets of {$\Bbb R$}}, Invent. Math.
  \textbf{208} (2017), no.~1, 217--245.

\bibitem{DiBe16}
Emmanuele DiBenedetto, \emph{Real analysis}, second ed., Birkh\"{a}user
  Advanced Texts: Basler Lehrb\"{u}cher. [Birkh\"{a}user Advanced Texts: Basel
  Textbooks], Birkh\"{a}user/Springer, New York, 2016.

\bibitem{Erde92}
Tam{\'a}s Erd{\'e}lyi, \emph{Remez-type inequalities on the size of generalized
  polynomials}, Journal of the London Mathematical Society \textbf{2} (1992),
  no.~2, 255--264.

\bibitem{FeSz55}
M.~Fekete and G.~Szeg\"{o}, \emph{On algebraic equations with integral
  coefficients whose roots belong to a given point set}, Math. Z. \textbf{63}
  (1955), 158--172.

\bibitem{Flam09}
V.~Flammang, \emph{Trace of totally positive algebraic integers and integer
  transfinite diameter}, Math. Comp. \textbf{78} (2009), no.~266, 1119--1125.

\bibitem{Gruber07}
Peter~M. Gruber, \emph{Convex and discrete geometry}, Grundlehren der
  Mathematischen Wissenschaften [Fundamental Principles of Mathematical
  Sciences], vol. 336, Springer, Berlin, 2007.

\bibitem{Hilbert94}
David Hilbert, \emph{Ein {B}eitrag zur {T}heorie des {L}egendre'schen
  {P}olynoms}, Acta Math. \textbf{18} (1894), no.~1, 155--159.

\bibitem{Honda68}
Taira Honda, \emph{Isogeny classes of abelian varieties over finite fields}, J.
  Math. Soc. Japan \textbf{20} (1968), 83--95.

\bibitem{Kade21}
Borys Kadets, \emph{Estimates for the number of rational points on simple
  abelian varieties over finite fields}, Math. Z. \textbf{297} (2021), no.~1-2,
  465--473. \MR{4204701}

\bibitem{Klee68}
Victor Klee, \emph{Maximal separation theorems for convex sets}, Transactions
  of the American Mathematical Society \textbf{134} (1968), no.~1, 133--147.

\bibitem{Liang11}
Yanhua Liang and Qiang Wu, \emph{The trace problem for totally positive
  algebraic integers}, J. Aust. Math. Soc. \textbf{90} (2011), no.~3, 341--354.

\bibitem{McKee11}
James McKee, \emph{Computing totally positive algebraic integers of small
  trace}, Math. Comp. \textbf{80} (2011), no.~274, 1041--1052.

\bibitem{McSmyth04}
James McKee and Christopher Smyth, \emph{Salem numbers of trace {$-2$} and
  traces of totally positive algebraic integers}, Algorithmic number theory,
  Lecture Notes in Comput. Sci., vol. 3076, Springer, Berlin, 2004,
  pp.~327--337.

\bibitem{Pehe90}
Franz Peherstorfer, \emph{Gauss-{T}chebycheff quadrature formulas}, Numerische
  Mathematik \textbf{58} (1990), 273--286.

\bibitem{Prit05}
Igor~E Pritsker, \emph{Small polynomials with integer coefficients}, Journal
  D’Analyse Mathematique \textbf{96} (2005), no.~1, 151--190.

\bibitem{Ransford95}
Thomas Ransford, \emph{Potential theory in the complex plane}, London
  Mathematical Society Student Texts, vol.~28, Cambridge University Press,
  Cambridge, 1995.

\bibitem{Robi64}
Raphael~M. Robinson, \emph{Conjugate algebraic integers in real point sets},
  Math. Z. \textbf{84} (1964), 415--427.

\bibitem{SaTo97}
Edward~B. Saff and Vilmos Totik, \emph{Logarithmic potentials with external
  fields}, Grundlehren der Mathematischen Wissenschaften [Fundamental
  Principles of Mathematical Sciences], vol. 316, Springer-Verlag, Berlin,
  1997, Appendix B by Thomas Bloom.

\bibitem{Salz45}
Herbert~E Salzer, \emph{Note on interpolation for a function of several
  variables}, Bulletin of the American Mathematical Society \textbf{51} (1945),
  no.~4, 279--280.

\bibitem{Schur18}
Issai Schur, \emph{{\"U}ber die {V}erteilung der {W}urzeln bei gewissen
  algebraischen {G}leichungen mit ganzzahligen {K}oeffizienten}, Mathematische
  Zeitschrift \textbf{1} (1918), no.~4, 377--402.

\bibitem{Serre18}
Jean-Pierre Serre, \emph{Distribution asymptotique des {V}aleurs {P}ropres des
  {E}ndomorphismes de {F}robenius [d'apr\`es {A}bel, {C}hebyshev,
  {R}obinson,{$\ldots$}]}, S\'{e}minaire Bourbaki. Vol. 2017/2018., vol. 414,
  Ast\'{e}risque, no. 1146, 2019.

\bibitem{Serre98b}
\bysame, 31 March 1998, Letter to Christopher Smyth.

\bibitem{Sieg45}
Carl~Ludwig Siegel, \emph{The trace of totally positive and real algebraic
  integers}, Ann. of Math. (2) \textbf{46} (1945), 302--312.

\bibitem{Smyth84a}
Christopher~J. Smyth, \emph{The mean values of totally real algebraic
  integers}, Math. Comp. \textbf{42} (1984), no.~166, 663--681.

\bibitem{Smyth84b}
\bysame, \emph{Totally positive algebraic integers of small trace}, Ann. Inst.
  Fourier (Grenoble) \textbf{34} (1984), no.~3, 1--28.

\bibitem{Smyth99}
\bysame, \emph{An inequality for polynomials}, Number theory ({O}ttawa, {ON},
  1996), CRM Proc. Lecture Notes, vol.~19, Amer. Math. Soc., Providence, RI,
  1999, pp.~315--321.

\bibitem{Totik09}
Vilmos Totik, \emph{Chebyshev constants and the inheritance problem}, J.
  Approx. Theory \textbf{160} (2009), no.~1-2, 187--201.

\bibitem{BCLPS21}
Raymond van Bommel, Edgar Costa, Wanlin Li, Bjorn Poonen, and Alexander Smith,
  \emph{Abelian varieties of prescribed order over finite fields}, arXiv
  preprint arXiv:2106.13651 (2021).

\bibitem{WaWuWu21}
Cong Wang, Jie Wu, and Qiang Wu, \emph{Totally positive algebraic integers with
  small trace}, Math. Comp. \textbf{90} (2021), no.~331, 2317--2332.

\bibitem{Widom69}
Harold Widom, \emph{Extremal polynomials associated with a system of curves in
  the complex plane}, Advances in Math. \textbf{3} (1969), 127--232.

\bibitem{Will04}
Stephen Willard, \emph{General topology}, Dover Publications, Inc., Mineola,
  NY, 2004, Reprint of the 1970 original [Addison-Wesley, Reading, MA;
  MR0264581].

\end{thebibliography}
\bibliographystyle{amsplain}

\end{document}